\newtheorem{theorem}{Theorem}[section]
\newtheorem{proposition}[theorem]{Proposition}
\newtheorem{lemma}[theorem]{Lemma}
\newtheorem{corollary}[theorem]{Corollary}
\theoremstyle{definition}
\newtheorem{definition}[theorem]{Definition}
\newtheorem{example}[theorem]{Example}
\newcommand{\nach}{\longrightarrow}
\newcommand{\auf}{\longmapsto}
\newcommand{\tfett}[1]{\textbf{#1}}
\newcommand{\ma}{\mathcal{A}}
\newcommand{\mc}{\mathcal{C}}
\newcommand{\md}{\mathcal{D}}
\newcommand{\me}{\mathcal{E}}
\newcommand{\mf}{\mathcal{F}}
\newcommand{\mm}{\mathcal{M}}
\newcommand{\nn}{\mathcal{N}}
\newcommand{\pp}{\mathcal{P}}
\newcommand{\Z}{\mathsf{Z}}
\newcommand{\SHC}{\mathsf{SHC}}
\DeclareMathOperator{\id}{id}
\DeclareMathOperator{\nat}{\mathsf{nat}}
\DeclareMathOperator{\Ho}{\mathsf{Ho}}
\DeclareMathOperator{\hhom}{\mathsf{hom}}
\DeclareMathOperator{\Hom}{\mathsf{Hom}}
\DeclareMathOperator{\Ext}{\mathsf{Ext}}
\DeclareMathOperator{\BiHom}{\mathsf{BiHom}}
\DeclareMathOperator{\END}{\mathsf{END}}
\DeclareMathOperator{\Sp}{\mathsf{Sp}}
\DeclareMathOperator{\Mod}{\mathsf{Mod}}
\DeclareMathOperator{\Ch}{\mathsf{Ch}}
\DeclareMathOperator{\Mon}{\mathsf{Mon}}
\DeclareMathOperator{\sSet}{\mathsf{sSet}}
\DeclareMathOperator{\Set}{\mathsf{Set}}
\DeclareMathOperator{\op}{op}
\DeclareMathOperator{\Cat}{\mathsf{Cat}}
\DeclareMathOperator{\CAT}{\mathsf{CAT}}
\DeclareMathOperator{\2Nat}{2-\mathsf{Nat}}
\DeclareMathOperator{\Dia}{\mathsf{Dia}}
\DeclareMathOperator{\pr}{pr}
\DeclareMathOperator{\DD}{\mathbb{D}}
\DeclareMathOperator{\EE}{\mathbb{E}}
\DeclareMathOperator{\FF}{\mathbb{F}}
\DeclareMathOperator{\LL}{\mathbb{L}}
\DeclareMathOperator{\RR}{\mathbb{R}}
\DeclareMathOperator{\SSS}{\mathbb{S}}
\DeclareMathOperator{\ipush}{i_{_\ulcorner}}
\DeclareMathOperator{\push}{\ulcorner}
\DeclareMathOperator{\inj}{\mathsf{inj}}
\DeclareMathOperator{\strict}{\mathsf{strict}}
\DeclareMathOperator{\PDer}{\mathsf{PDer}}
\DeclareMathOperator{\Der}{\mathsf{Der}}
\DeclareMathOperator{\PsNat}{\mathsf{PsNat}}
\DeclareMathOperator{\MonCAT}{\mathsf{MonCAT}}
\DeclareMathOperator{\MonPDer}{\mathsf{MonPDer}}
\DeclareMathOperator{\sMonCAT}{\mathsf{sMonCAT}}
\DeclareMathOperator{\sMonPDer}{\mathsf{sMonPDer}}
\DeclareMathOperator{\sMonDer}{\mathsf{sMonDer}}
\DeclareMathOperator{\Joyal}{\mathsf{Joyal}}
\DeclareMathOperator{\Kan}{\mathsf{Kan}}
\DeclareMathOperator{\sSetKan}{\sSet_{\Kan}}
\DeclareMathOperator{\sSetJoyal}{\sSet_{\Joyal}}
\DeclareMathOperator{\tw}{tw}
\DeclareMathOperator{\DistE}{\mathsf{Dist}_{\EE}}
\begin{document}

\title{Monoidal derivators and additive derivators}
\author{Moritz Groth}
\date{\today} 
\address{Moritz Groth, Radboud University, Nijmegen, Netherlands, email: m.groth@math.ru.nl}
\begin{abstract}
One aim of this paper is to develop some aspects of the theory of monoidal derivators. The passages from categories and model categories to derivators both respect monoidal objects and hence give rise to natural examples. We also introduce additive derivators and show that the values of strong, additive derivators are canonically pretriangulated categories. Moreover, the center of additive derivators allows for a convenient formalization of linear structures and graded variants thereof in the stable situation. As an illustration of these concepts, we discuss some derivators related to chain complexes and symmetric spectra.
\end{abstract}

\maketitle


\tableofcontents

\setcounter{section}{-1}

\section{Introduction}\label{p2section_intro}

The first main aim of this paper\footnote{This research was partially supported by the Deutsche Forschungsgemeinschaft within the graduate program `Homotopy and Cohomology' (GRK 1150)} is to develop some aspects of the theory of monoidal derivators (cf.\ also to \cite{cisinski_derivedkan}). As we saw in the companion paper \cite{groth_derivator}, two important classes of derivators are given by derivators associated to combinatorial model categories and derivators represented by bicomplete categories. Both classes of examples can be refined to give corresponding statements about situations where the `input is suitably monoidal'. We formalize the notion of a monoidal (pre)derivator and make these statements precise.

It is well-known that homotopy categories of (combinatorial) monoidal model categories (in the sense of Hovey~\cite{hovey}, as opposed to the slightly different notion of~\cite{SchwedeShipley_Alg}) can be canonically endowed with monoidal structures and similarly for suitably monoidal Quillen adjunctions. These statements are truncations of more structured results as we will see below. We will show that the derivator associated to a combinatorial monoidal model category can be canonically endowed with a monoidal structure. This easily generalizes to model categories which are suitable modules over a monoidal model category. However, to keep this paper at a reasonable length this will only be taken up in the sequel to this paper~(\cite{groth_enriched}) where we also talk about enriched derivators.

The second main aim of this paper is to discuss a few aspects of additive derivators. The author is not aware of a place in the literature where these derivators and linear structures on derivators are considered. It is for this purpose that we introduce additive derivators which by definition provide us with a class of derivators sitting between the pointed and the stable ones. Pushing further the corresponding result about stable derivators, we show that the values of a strong, additive derivator can be canonically turned into pretriangulated categories. Moreover, we also introduce the center of an additive derivator. This allows for a compact definition of a derivator which is linear over some commutative ring and also of graded variants thereof in the stable situation. We show that additive, monoidal derivators inherit canonically certain linear structures and there is a similar graded version for stable, monoidal derivators.

Since the passage from combinatorial model categories to derivators respects monoidal structures we obtain a conceptual explanation for the existence of linear structures on certain naturally occurring derivators.  As special cases we obtain, e.g., that the derivator of spectra is graded-linear over the stable homotopy groups of spheres and that the derivator of chain complexes is linear over the ground ring. It is easy to extend these two examples to modules over commutative monoids in which case we obtain graded-linear structures over the homotopy and homology ring of the commutative monoid respectively. 

We also have such a result for modules over non-commutative monoids. In these cases the derivators are graded-linear over the homotopy groups of the topological Hochschild cohomology of the ring spectrum and the Hochschild cohomology of the differential-graded algebra respectively. These last examples were our original motivation for studying monoidal and additive derivators but again will only be treated in the sequel \cite{groth_enriched}. In that paper we will also see that these (graded-) linear structures are only `shadows' of certain monoidal morphisms of monoidal derivators.

We now turn to a description of the content by sections.  In Section \ref{section_cartesian} we develop some general theory about the 2-category of (pre)derivators which is essential to the monoidal picture. Pre-derivators and derivators are respectively organized in a Cartesian monoidal 2-category. We then turn to the important related notion of a bimorphism and show that the bimorphism functor is corepresented by the Cartesian product. Once we have introduced bimorphisms which preserve homotopy colimits separately in each variable we can talk about adjunctions of two variables in the context of derivators. This latter notion is analyzed in some detail since it plays a key role both in later sections and in \cite{groth_enriched}. 

In Section \ref{section_monoidal} we consider the basic notions of monoidal (pre)derivators, monoidal morphisms, and monoidal transformations giving rise to corresponding 2-categories. We define a monoidal prederivator by making explicit the notion of a monoidal object in the Cartesian 2-category of prederivators. By definition a monoidal derivator is a derivator endowed with a monoidal structure such that the monoidal pairing preserves homotopy colimits separately in each variable. We then show that derivators associated to combinatorial monoidal model categories are canonically monoidal. More generally, we show that a Brown functor between model categories (cf.\ ~Definition \ref{definition_Brown}) induces a morphism of associated derivators and we mention some relevant examples. In the last subsection, given a monoidal derivator we associate a bicategory of distributors to it. This bicategory has some pleasant formal properties and encodes important structure. In particular, weighted homotopy (co)limits are subsumed by this structure as we discuss more carefully in \cite{groth_enriched}.

In Section \ref{section_additive} we define additive derivators by asking the underlying category to be additive. This implies that all values and all functors in sight are additive. Moreover, in the case of a strong, additive derivator we can recycle constructions from the stable context (\cite[Section 4]{groth_derivator}) in order to construct both a left and a right triangulation on its values. These fit together nicely in the sense of a pretriangulated structure. Moreover, this structure is shown to be compatible with suitably exact morphisms and hence, in particular, with the precomposition and homotopy Kan extension functors of the additive derivator. These results apply for example to the derivator of non-negative chain complexes over a ring (which is not stable!). We then introduce the center of an additive derivator and show how this leads to the notion of linear structures on additive derivators. These linear structures turn out to be levelwise linear structure compatible with all functors belonging to the derivator. Finally, we deduce that additive, monoidal are canonically linear over the ring of self-maps of the monoidal unit of the underlying monoidal category. There are also similar results for the graded variant of the center.

Before we begin with the proper content of this paper let us make two more comments. The first comment concerns set-theoretical issues. In what follows we will frequently consider the `category of categories' and similar gadgets. Strictly speaking there are some size issues which were to be considered here but these problems could be circumvented by a use of Grothendiecks language of universes. Since we do not wish to add an additional technical layer to the exposition by keeping track of the different universes we decided to ignore these issues. The second comment concerns duality. Many of the statements in this paper have dual statements which also hold true by the dual proof. The reason for this is that all classes of derivators under consideration are closed under the passage to the opposite derivators. In most cases, we will not make these statements explicit but nevertheless allow ourselves to refer to a statement also in cases where, strictly speaking, the dual statement is needed. 

\tfett{Acknowledgments.} It is a pleasure to thank Andr{\'e} Joyal, Stefan Schwede and Michael Shulman for fruitful discussions and their ongoing interest in the subject.

\section{The Cartesianness of the 2-category of derivators and related notions}\label{section_cartesian}

\subsection{The Cartesian monoidal 2-categories $\Der$ and $\PDer$}

In order to establish some notation we begin by quickly recalling the definitions of a prederivator and morphisms between prederivators (cf.\ \cite[Section 1 and 2]{groth_derivator} for more details). By contrast, we refer to \cite[Section 1]{groth_derivator} for the definition of a derivator, some motivation for the notion, and also important classes of examples. Other references for derivators include \cite{grothendieck,heller} and \cite{franke,keller_universal,maltsiniotis2,maltsiniotis1,cisinskineeman}. For the basic notions of the theory of 2-categories we refer to \cite{borceux1,maclane,kelly_2cat}. This language will be used slightly more systematically here than in the companion paper \cite{groth_derivator} but again nothing deep from that theory is used.

Let us recall that a \emph{prederivator} is a 2-functor $\DD\colon\Cat^{\op}\nach\CAT$ where $\Cat$ denotes the 2-category of small categories and $\CAT$ denotes the 2-category of (not necessarily small) categories. Spelling out this definition, we thus have for every small category $J$ an associated category $\DD(J),$ for a functor $u\colon J\nach K$ an induced functor $\DD(u)=u^\ast \colon\DD(K)\nach\DD(J)$ and for a natural transformation $\alpha\colon u\nach v$ of two such functors a natural transformation $\DD(\alpha)=\alpha^\ast\colon u^\ast\nach v^\ast$ as indicated in the following diagram:
$$
\xymatrix{
J \rtwocell^u_v {\alpha}& K,& & \DD(K) \rtwocell^{u^\ast}_{v^\ast}{\;\;\alpha^\ast}& \DD(J).
}
$$
These assignments are compatible with compositions and units in a strict sense, i.e., we have equalities of the respective expressions. One can of course also consider 2-functors which are only defined on certain 2-subcategories $\Dia\subseteq\Cat$ (for example finite categories, finite and finite-dimensional categories or posets) subject to certain closure properties (cf.\ Section 4 of \cite{groth_derivator}). This would then lead to the notion of a (pre)derivator \emph{of type} $\Dia.$ For simplicity, we will stick to the case of all small categories but everything that we do in this paper can also be done for prederivators of type $\Dia.$

A \emph{morphism} $F\colon\DD\nach\DD'$ \emph{of prederivators} is a pseudo-natural transformation of 2-functors (\cite[Definition 7.5.2]{borceux1}). Thus, such a morphism consists of a functor $F_J\colon\DD(J)\nach\DD'(J)$ for each small category~$J$ together with specified isomorphisms $\gamma^F_u\colon u^\ast\circ F_K\nach F_J\circ u^\ast$ for each functor $u\colon J\nach K.$ These isomorphisms have to be suitably compatible with compositions and identities. More precisely, given a pair of composable functors $J\stackrel{u}{\nach}K\stackrel{v}{\nach}L$ and a natural transformation $\alpha\colon u_1\nach u_2\colon J\nach K,$ we then have the following relation resp.\ commutative diagrams:
$$\xymatrix{
\gamma_{\id_J}=\id_{F_J}&&u^\ast v^\ast F\ar[r]^{\gamma _v}\ar@/_1.0pc/[dr]_{\gamma_{vu}}&u^\ast F v^\ast\ar[d]^{\gamma _u}&&u_1^\ast F\ar[r]^{\alpha^\ast}\ar[d]_{\gamma_{u_1}}&u_2^\ast F\ar[d]^{\gamma_{u_2}}\\
& &&  F u^\ast v^\ast && F u_1^\ast\ar[r]_{\alpha ^\ast}& Fu_2^\ast
}
$$
Here, we suppressed the indices of $F$ and the upper indices of the natural transformation $\gamma$ (as we will frequently do in the sequel) to avoid awkward notation. Moreover, we will not distinguish notationally between the natural transformations $\gamma$ and their inverses. If all the components $\gamma^F_u$ are identities then $F$ will be called a \emph{strict morphism.}

We will later introduce the notion of an adjunction of two variables between (pre)derivators and in that context it will be important that we also have a lax version of morphisms. So, let us call a lax natural transformation $F\colon\DD\nach\DD'$ a \emph{lax morphism of prederivators}. Thus, such a lax morphism consists of a similar datum as a morphism and satisfies the same coherence conditions with the difference that the natural transformations $\gamma^F_u\colon u^\ast \circ F\nach F\circ u^\ast$ are not necessarily invertible. For simplicity we will also apply the same terminology to `extranatural' variants thereof as in the context of adjunctions of two variables (cf.\ Lemma \ref{lemma_ad2var}).

Finally, let $F,\:G\colon\DD\nach\DD'$ be two morphisms of prederivators. A \emph{natural transformation} $\tau\colon F\nach G$ is a family of natural transformations $\tau_J\colon F_J\nach G_J$ which are compatible with the coherence isomorphisms belonging to the functors $F$ and $G.$ Thus, for every functor $u\colon J\nach K$ the following diagram commutes:
$$
\xymatrix{
u^\ast F\ar[r]^\tau\ar[d]_\gamma& u^\ast G\ar[d]^\gamma\\
Fu^\ast\ar[r]_\tau&Gu^\ast
}
$$ 
One checks that a natural transformation is precisely the same as a \emph{modification} of pseudo-natural transformations (see \cite[Definition 7.5.3]{borceux1}). Given two parallel morphisms $F$ and $G$ of pre-derivators let us denote by $\nat(F,G)$ the natural transformations from $F$ to $G.$ 

Thus, with prederivators as objects, morphisms as $1$-cells, and natural transformations as 2-cells we obtain the 2-category $\PDer$ of prederivators. In fact, this is just a special case of the 2-category of 2-functors, pseudo-natural transformations, and modifications. The full sub-2-category spanned by the derivators is denoted by $\Der.$ Given two (pre)derivators $\DD$ and $\DD'$ let us denote the category of morphisms by $\Hom(\DD,\DD')$ while we will write $\Hom^{\strict}(\DD,\DD')$ for the full subcategory spanned by the strict morphisms.

\begin{example}
The Yoneda embedding $y\colon\CAT\nach\PDer$ sends a category $\mc$ to the represented prederivator $y(\mc)\colon \Cat^{\op}\nach\CAT\colon J\auf \mc^J.$ Here, $\mc^J$ denotes the category of functors from~$J$ to~$\mc.$ The 2-categorical Yoneda lemma implies that for an arbitrary prederivator $\DD$ we have a natural isomorphism of categories
$$Y\colon\Hom_{\PDer}^{\strict}(y(J),\DD)\stackrel{\cong}{\nach}\DD(J).$$
For simplicity, we will sometimes drop the embedding $y$ from notation and again just write $\mc$ for the prederivator represented by a category $\mc$.
\end{example}

In every 2-category we have the notion of adjoint  $1$-morphisms, equivalences, and Kan extensions (see Sections 1 and 2 of \cite{street_formalmonads}). Since we will later introduce adjunctions of two variables let us recall the first notion in the 2-category $\Der$ of derivators (cf.\ \cite[Subsection 2.2]{groth_derivator}). A morphism $L\colon\DD\nach\DD'$ of derivators is a left adjoint if and only if it is levelwise a left adjoint functor $L_K\colon\DD(K)\nach\DD'(K)$ and it preserves homotopy left Kan extensions. 

For convenience, let us be more precise about the second condition. By definition of a derivator, the precomposition functor $u^\ast\colon\DD(K)\nach\DD(J)$ associated to a functor $u\colon J\nach K$ has adjoints on both sides which are called \emph{homotopy Kan extension functors} along~$u$. Thus, we have a \emph{homotopy left Kan extension functor} $u_!\colon\DD(J)\nach\DD(K)$ and a \emph{homotopy right Kan extension functor} $u_\ast\colon\DD(J)\nach\DD(K).$ Now, given a morphism $L\colon\DD\nach\DD'$ of derivators we can use these adjoints together with the adjunction morphisms in order to construct the Beck-Chevalley transformed 2-cells~${\gamma^L_u}_!$ (cf.\ \cite[Subsection 1.2]{groth_derivator}):
$$
\xymatrix{
\DD'(J)\xtwocell[1,1]{}\omit&\DD(J)\ar[l]_L&&
\DD'(K)\xtwocell[1,1]{}\omit&\DD'(J)\xtwocell[1,1]{}\omit\ar[l]_-{u_!} & \DD(J)\xtwocell[1,1]{}\omit \ar[l]_-L& &\\
\DD'(K)\ar[u]^{u^\ast}&\DD(K)\ar[l]^L\ar[u]_{u^\ast}&&
&\DD'(K) \ar@/^1.0pc/[lu]^-=\ar[u]^-{u^\ast} & \DD(K)\ar[l]^-L\ar[u]_-{u^\ast}& \DD(J)\ar@/_1.0pc/[lu]_-= \ar[l]^-{u_!}& 
}
$$
By definition, ${\gamma^L_u}_!\colon u_!\circ L\nach L\circ u_!$ is given by the pasting of the above right diagram in which the two additional natural transformations are adjunction morphisms. We say that~$L$ \emph{preserves homotopy left Kan extensions} if the natural transformation ${\gamma^L_u}_!\colon u_!\circ L\nach L\circ u_!$ is an isomorphism for all functors $u\colon J\nach K$.

Now, given such a left adjoint morphism of derivators $L\colon\DD\nach\DD'$ we choose levelwise right adjoint functors $R_K\colon\DD'(K)\nach\DD(K).$ These can be uniquely assembled into a morphism of derivators $R\colon\DD'\nach\DD$ such that the adjunctions at the different levels are compatible. By this we mean that for a functor $u\colon J\nach K$ we obtain the following commutative diagram
$$
\xymatrix{
\hhom_{\DD'(K)}(L_KX,Y)\ar[d]_{u^\ast}\ar[r]^\cong& \hhom_{\DD(K)}(X,R_KY)\ar[d]^{u^\ast}\\
\hhom_{\DD'(J)}(u^\ast L_KX,u^\ast Y)\ar[d]_{\gamma^L}&\hhom_{\DD(J)}(u^\ast X,u^\ast R_KY)\ar[d]^{\gamma^R}\\
\hhom_{\DD'(J)}(L_Ju^\ast X,u^\ast Y)\ar[r]_\cong&\hhom_{\DD(J)}(u^\ast X, R_Ju^\ast Y)
}
$$
where the morphisms $\gamma^L$ and $\gamma^R$ are the natural transformations which belong to the morphisms $L$ and $R$ respectively.

Let us define the (`internal') product of two prederivators. Thus, let $\DD$ and $\DD'$ be prederivators, then their product $\DD\times\DD'\in\PDer$ is defined to be the composition of 2-functors
$$
\xymatrix{
\Cat^{\op}\ar[r]^-\Delta&\Cat^{\op}\times\Cat^{\op}\ar[r]^-{\DD\times\DD'}&\CAT\times\CAT\ar[r]^-{\times}&\CAT
}
$$
where $\Delta$ denotes the diagonal. The product of morphisms of prederivators and natural transformations is defined similarly and this gives us the 2-product in the 2-category $\PDer$ of prederivators. Recall from \cite[Section 3 and 4]{groth_derivator} that we also have the notions of a pointed derivator and a stable derivator.

\begin{lemma}
Let $\DD$ and $\DD'$ be derivators. Then $\DD\times\DD'$ is again a derivator. Moreover, if $\DD$ and $\DD'$ are in addition pointed or stable then also $\DD\times\DD'$ is pointed or stable respectively.
\end{lemma}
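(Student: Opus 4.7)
The plan is to verify the derivator axioms (Der1)--(Der4) for $\DD\times\DD'$ by observing that each axiom refers only to properties (finite products of categories, joint conservativity of evaluation functors, existence of adjoints, and pointwise Kan extension/Beck--Chevalley isomorphisms) that are stable under the Cartesian product $\times\colon\CAT\times\CAT\nach\CAT$. Since by construction $(\DD\times\DD')(J)=\DD(J)\times\DD'(J)$ and $(\DD\times\DD')(u)=u^\ast\times u^\ast$, every piece of structure on $\DD\times\DD'$ is componentwise, which reduces every axiom to the hypothesis that $\DD$ and $\DD'$ are derivators.

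More precisely, for (Der1) the natural map
$$(\DD\times\DD')(J\sqcup K)=\DD(J\sqcup K)\times\DD'(J\sqcup K)\nach \bigl(\DD(J)\times\DD(K)\bigr)\times\bigl(\DD'(J)\times\DD'(K)\bigr)$$
is an equivalence since it is a product of equivalences, and after reordering factors this is the asserted equivalence; the case of the empty category is similar. For (Der2), a morphism $(f,g)$ in $\DD(J)\times\DD'(J)$ is an isomorphism iff $f$ and $g$ are, and $j^\ast(f,g)=(j^\ast f, j^\ast g)$; thus joint conservativity of $\{j^\ast\}_{j\in J}$ in each factor implies the same for the product. For (Der3), a product of left adjoints is a left adjoint to the product, so $u_!\times u_!$ and $u_\ast\times u_\ast$ provide the required homotopy left and right Kan extension functors. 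For (Der4), the Beck--Chevalley transformation in $\DD\times\DD'$ associated to a slice square is the Cartesian product of the Beck--Chevalley transformations in $\DD$ and in $\DD'$, hence an isomorphism.

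For pointedness, if $0\in\DD(e)$ and $0'\in\DD'(e)$ are zero objects, then $(0,0')$ is a zero object of $(\DD\times\DD')(e)=\DD(e)\times\DD'(e)$, so the product is pointed. For stability, one observes that a coherent square in $(\DD\times\DD')(\square)=\DD(\square)\times\DD'(\square)$ is cartesian (respectively cocartesian) if and only if each of its two components is so in the corresponding factor, since the defining Kan extensions from the relevant subshapes are computed componentwise. Consequently the classes of cartesian and of cocartesian squares in the product coincide whenever they do so in each factor.

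There is no serious obstacle here beyond the bookkeeping: the whole argument rests on the general principle that the derivator axioms are phrased in terms of $2$-limit constructions in $\CAT$ (products, adjunctions, isomorphisms), all of which are preserved by the Cartesian product functor $\CAT\times\CAT\nach\CAT$. The mildly subtle point worth checking carefully is that the Beck--Chevalley map for $\DD\times\DD'$ really is the product of the individual Beck--Chevalley maps, which is a direct consequence of the fact that the chosen adjoints and unit/counit 2-cells on the product side are componentwise.
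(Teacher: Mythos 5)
Your argument is essentially the paper's own: every axiom is verified componentwise, isomorphisms and adjoints in a product of categories are detected factorwise, and the Beck--Chevalley transformation for $\DD\times\DD'$ is the product of the two individual ones. The treatment of (Der1)--(Der4) and of pointedness matches the paper's proof.

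The one genuine omission is in the stability part. The definition of a stable derivator used here (from \cite[Def.~4.1]{groth_derivator}) requires the derivator to be \emph{strong}, i.e., the partial underlying diagram functors $\DD([1]\times J)\nach\DD(J)^{[1]}$ must be full and essentially surjective. You verify that Cartesian and coCartesian squares in the product are detected componentwise and hence that the two classes coincide, but you never check strongness of $\DD\times\DD'$. The paper handles this by noting that a product of two full (resp.\ essentially surjective) functors is again full (resp.\ essentially surjective), so the partial underlying diagram functor of the product, being the product of the two individual ones up to the evident identification, inherits both properties. This is a one-line fix, but as written your proof of the stability claim is incomplete.
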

\begin{proof}
Since isomorphisms in product categories are detected pointwise and since a product of two functors is an adjoint functor resp.\ an equivalence if and only if this is the case for the two factors the axioms (Der1)-(Der3) are immediate. Also the base change axiom holds since the base change morphism in $\DD\times\DD'$ can be taken to be the product of the base change morphisms in $\DD$ and $\DD'$ which are isomorphisms by assumption. Thus, with $\DD$ and $\DD'$ also the product $\DD\times\DD'$ is a derivator. Similarly, since the product of pointed categories is again pointed we obtain the result for pointed derivators. For stable derivators, note that $\DD\times\DD'$ is strong since the product of two full or essentially surjective functors is again full or essentially surjective respectively. Finally, an object $X=(Y,Y')\in\DD(\square)\times\DD'(\square)$ is (co)Cartesian if and only if the components $Y\in\DD(\square)$ and $Y'\in\DD'(\square)$ are (co)Cartesian. Hence, if $\DD$ and $\DD'$ are stable, the product $\DD\times\DD'$ is also stable.
\end{proof}

The product endows the 2-categories $\PDer$ and $\Der$ with the structure of a symmetric monoidal 2-category, called the Cartesian monoidal structure. The unit $e$ of the monoidal structure is the prederivator with constant value the terminal category $e$ (consisting of one object and its identity morphism only) and the symmetry constraint is given by the twist morphism $T\colon\DD\times\DD'\nach\DD'\times\DD$. To simplify notation we will suppress the canonical associativity isomorphisms and hence also brackets from notation. In the next section, we will introduce monoidal (pre)derivators as certain monoidal objects in the respective 2-categories. However, it turns out to be convenient to describe the monoidal structure using \emph{bimorphisms} which will be introduced in the next subsection.

\subsection{Bimorphisms and homotopy (co)limit preserving bimorphisms}

Since the product of two prederivators is the 2-categorical product we understand morphisms \emph{into} them. But also maps \emph{out of} a product of two prederivators are easy to describe: up to an equivalence of categories these are just the bimorphisms as we will define them now.

\begin{definition}
Let $\DD,\:\EE,$ and $\FF$ be prederivators. A \emph{bimorphism} $B$ \emph{from} $(\DD,\EE)$ \emph{to} $\FF$, denoted $B\colon(\DD,\EE)\nach\FF,$ consists of a family of functors
$$B_{J_1,J_2}\colon\DD(J_1)\times\EE(J_2)\nach\FF(J_1\times J_2),\qquad J_1,\:J_2\in\Cat,$$
and for each pair of functors $(u_1,u_2)\colon(J_1,J_2)\nach(K_1,K_2)$ a natural isomorphism $\gamma^B_{u_1,u_2}$ as indicated in:
$$
\xymatrix{
\DD(K_1)\times\EE(K_2)\xtwocell[1,1]{}\omit\ar[r]^-B\ar[d]_{u_1^\ast\times u_2^\ast} &\FF(K_1\times K_2)\ar[d]^{(u_1\times u_2)^\ast}\\
\DD(J_1)\times\EE(J_2)\ar[r]_-B&\FF(J_1\times J_2)
}
$$
These data have to satisfy the following coherence conditions. Given a pair of composable pairs $(u_1,u_2)\colon(J_1,J_2)\nach(K_1,K_2)$ and $(v_1,v_2)\colon(K_1,K_2)\nach(L_1,L_2)$ and a pair of natural transformations $(\alpha_1,\alpha_2)\colon(u_1,u_2)\nach(u_1',u_2')$ we have $\gamma_{\id_{J_1},\id_{J_2}}=\id_{B_{J_1,J_2}}$ and the commutativity of the following two diagrams:
$$
\xymatrix{
(u_1\times u_2)^\ast (v_1\times v_2)^\ast B\ar[r]^-\gamma \ar@/_1.0pc/[dr]_-\gamma&(u_1\times u_2)^\ast B (v_1^\ast\times v_2^\ast)\ar[d]^\gamma&
(u_1\times u_2)^\ast B\ar[r]\ar[d]_\gamma&(u_1'\times u_2')^\ast B\ar[d]^\gamma\\
&  B(u_1^\ast\times u_2^\ast)(v_1^\ast\times v_2^\ast) & 
B (u_1^\ast\times u_2^\ast)\ar[r]& B(u_1'^\ast \times u_2'^\ast)
}
$$
\end{definition}

Now, given two parallel bimorphisms $B,B'\colon(\DD,\EE)\nach\FF,$ a \emph{natural transformation} $\tau\colon B\nach B'$ \emph{of bimorphisms} consists of a family of natural transformations $\tau_{J_1,J_2}\colon B_{J_1,J_2}\nach B'_{J_1,J_2}.$ These have to be compatible in the sense that given a pair of functors $(u_1,u_2)\colon (J_1,J_2)\nach(K_1,K_2)$ the following diagram commutes:
$$
\xymatrix{
(u_1\times u_2)^\ast B\ar[r]^\tau\ar[d]_\gamma& (u_1\times u_2)^\ast B'\ar[d]^\gamma\\
B(u_1^\ast\times u_2^\ast)\ar[r]_\tau&B'(u_1^\ast\times u_2^\ast)
}
$$

Let us quickly mention that three of the above coherence properties can be expressed by equalities between certain pasting diagrams. This observation combined with the nice behavior of Beck-Chevalley transformation with respect to pasting (cf.\ \cite[Lemma 1.18]{groth_derivator}) proves to be useful in the discussion of adjunctions of two variables. 

Given three prederivators $\DD,\:\EE,$ and $\FF$ we obtain a category of bimorphisms from $(\DD,\:\EE)$ to $\FF$ which we denote by $\BiHom((\DD,\EE),\FF).$ In fact, given three such prederivators we can consider the \emph{exterior product} $\DD\boxtimes\EE$ \emph{of} $\DD$ and $\EE$ and the 2-functor $\FF\circ(-\times -)$ which are respectively defined by
$$(\DD\boxtimes\EE)(J_1,J_2)=\DD(J_1)\times\EE(J_2)\qquad\mbox{and}\qquad(\FF\circ(-\times-))(J_1,J_2)=\FF(J_1\times J_2).$$
Then, we have an equality of categories
$$\BiHom((\DD,\EE),\FF)=\PsNat(\DD\boxtimes\EE,\FF\circ(-\times -))$$
where $\PsNat(-,-)$ denotes the category of pseudo-natural transformations and modifications (cf.\ \cite[Definition 7.5.2, Definition 7.5.3]{borceux1}). This observation shows that $\BiHom((-,-),-)$ is functorial in all three arguments. 

Let us now show that $\BiHom((-,-),-)$ is corepresentable by the product. For prederivators~$\DD$ and~$\EE,$ the universal bimorphism $(\DD,\EE)\nach\DD\times\EE$ has components induced by the projections:
$$
\xymatrix{
\DD(J_1)\times\EE(J_2)\ar[rr]^-{\pr_1^\ast\times\pr_2^\ast}&&\DD(J_1\times J_2)\times\EE(J_1\times J_2)
}$$
This bimorphism gives the right adjoint in the following proposition.

\begin{proposition}\label{prop_bimorphism}
For prederivators $\DD,\:\EE,$ and $\FF$ we have natural isomorphisms between categories of strict (bi)morphisms and natural equivalences between categories of (bi)morphisms:
$$\BiHom^{\strict}((\DD,\EE),\FF)\stackrel{\cong}{\nach}\Hom^{\strict}(\DD\times\EE,\FF)\qquad\mbox{and}\qquad\BiHom((\DD,\EE),\FF)\stackrel{\simeq}{\nach}\Hom(\DD\times\EE,\FF)$$
\end{proposition}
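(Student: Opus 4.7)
The plan is to exhibit explicit functors in both directions and verify they give mutually inverse equivalences (or isomorphisms of categories in the strict case). The right-to-left direction is the easy one: any morphism $F \colon \DD \times \EE \nach \FF$ pulls back along the universal bimorphism $U\colon (\DD, \EE) \nach \DD \times \EE$ (whose component at $(J_1, J_2)$ is $\pr_1^\ast \times \pr_2^\ast$) to yield a bimorphism whose coherence data is inherited from $\gamma^F$. The three coherence axioms for a bimorphism are then direct consequences of the two coherence axioms for the pseudo-natural $F$.

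For the left-to-right direction, given $B\colon (\DD, \EE) \nach \FF$, I would set $\tilde{B}_J = \Delta_J^\ast \circ B_{J,J}\colon \DD(J)\times\EE(J) \nach \FF(J)$ where $\Delta_J\colon J \nach J\x J$ is the diagonal. The coherence isomorphism $\gamma^{\tilde{B}}_u\colon u^\ast \tilde{B}_K \nach \tilde{B}_J (u^\ast \x u^\ast)$ is built from the (strict) functoriality identity $\Delta_K\circ u = (u\x u)\circ \Delta_J$ combined with $\Delta_J^\ast$ applied to $\gamma^B_{u,u}\colon (u\x u)^\ast B_{K,K} \nach B_{J,J}(u^\ast \x u^\ast)$. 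Verifying that $\tilde{B}$ is actually a pseudo-natural transformation amounts to feeding the coherence axioms of $B$ with pairs of the form $(u,v)$ and $(\alpha,\beta)$ where both entries agree.

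That the two constructions are mutually inverse rests on two simple identities of functors. Starting from a morphism $F$ and reconstructing $\tilde{B}$, one uses $\pr_i \circ \Delta_J = \id_J$ to obtain $\tilde{B}_J = F_J$ on the nose (since the coherence $\gamma^F_\Delta$ cancels against the product structure appropriately, and is an actual identity in the strict case). Starting from a bimorphism $B$, pulling back along $U$ produces $B'_{J_1,J_2}(X,Y) = \Delta_{J_1 \x J_2}^\ast B_{J_1\x J_2, J_1\x J_2}(\pr_1^\ast X, \pr_2^\ast Y)$, which is isomorphic to $B_{J_1,J_2}(X,Y)$ via $\gamma^B_{\pr_1,\pr_2}$ together with $(\pr_1\x\pr_2)\circ\Delta_{J_1\x J_2} = \id$. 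This second round-trip is only a natural isomorphism in general, which is precisely why one gets an equivalence in the pseudo case but an isomorphism of categories when all $\gamma$'s are identities.

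The extension to 2-cells is essentially forced: a modification $\tau\colon B \nach B'$ yields $\Delta_J^\ast \tau_{J,J}\colon \tilde{B}_J \nach \tilde{B}'_J$, while a modification $\sigma\colon F\nach F'$ produces $\sigma_{J_1\x J_2}(\pr_1^\ast \x \pr_2^\ast)$ as its corresponding natural transformation of bimorphisms, and the compatibility diagrams on both sides match up. The main obstacle is the bookkeeping needed to check all of the coherence axioms and the compatibility between the two round-trip natural isomorphisms and the assignment on 2-cells; but once the data are unpacked, everything reduces to formal manipulations using that $\Cat$-valued 2-functors strictly respect the identities $\pr_i \circ \Delta = \id$ and $(\pr_1\x\pr_2)\circ\Delta = \id$.
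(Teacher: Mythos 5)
Your proposal is correct and follows essentially the same route as the paper: the same pair of functors $\Delta_J^\ast\circ B_{J,J}$ and $F_{J_1\times J_2}\circ(\pr_1^\ast\times\pr_2^\ast)$, with the round trips controlled by $\pr_i\circ\Delta_J=\id_J$ and $(\pr_1\times\pr_2)\circ\Delta_{J_1\times J_2}=\id$ (which the paper packages as the triangular identities for the adjunction $(\Delta,\times)\colon\Cat\rightharpoonup\Cat\times\Cat$ and depicts via Beck--Chevalley-style pastings). The one slight imprecision is your claim that the $F$-round-trip recovers $F_J$ ``on the nose'': in the non-strict case it too is only a natural isomorphism, mediated by $\gamma^F_{\Delta_J}$ -- indeed the paper locates the failure of strict invertibility precisely there -- but this does not affect your (correct) conclusion of an isomorphism of categories in the strict case and an equivalence in general.
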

\begin{proof}
We begin with the strict case and sketch a definition of functors in both directions which will be inverse to each other. Given a strict bimorphism~$B\colon(\DD,\EE)\nach\FF$ we obtain a strict morphism $l(B)\colon\DD\times\EE\nach\FF$ with components:
$$
\xymatrix{
l(B)_J\colon\DD(J)\times\EE(J)\ar[r]^-{B_{J,J}}&\FF(J\times J)\ar[r]^-{\Delta_J^\ast}&\FF(J)
}
$$
Conversely, given a strict morphism~$F\colon\DD\times\EE\nach\FF$ we can construct an associated bimorphism~$r(F)\colon(\DD,\EE)\nach\FF$ with components:
$$
\xymatrix{
r(F)_{J_1,J_2}\colon\DD(J_1)\times\EE(J_2)\ar[rr]^-{\pr_1^\ast\times\pr_2^\ast}&&
\DD(J_1\times J_2)\times\EE(J_1\times J_2)\ar[rr]^-{F_{J_1\times J_2}}&&\FF(J_1\times J_2)
}
$$

Thus, in the construction of the two functors we make use of the adjunction morphisms belonging to the adjunction $(\Delta,\times)\colon\Cat\rightharpoonup\Cat\times\Cat.$ In fact, the adjunction unit $\eta\colon\id\nach\times\circ\Delta$ is given by the diagonals while the adjunction counit $\epsilon\colon\Delta\circ\times\nach\id$ is given by the projections. Let us note that if we take this adjunction and pass to~$\Cat^{\op}$ and $\Cat^{\op}\times\Cat^{\op}$ respectively then the direction of~$\eta$ and~$\epsilon$ are inverted. 

Using $(\DD\boxtimes\EE)\circ\Delta=\DD\times\EE,$ we can thus rewrite the functor~$l$ as the following composition:
$$
\xymatrix{
\2Nat(\DD\boxtimes\EE,\FF\circ\times)\ar[r]^-{\circ\Delta}&\2Nat((\DD\boxtimes\EE)\circ\Delta,\FF\circ\times\circ\Delta)\ar[r]^-\eta& \2Nat(\DD\times\EE,\FF)
}
$$
\noindent
If we depict this construction graphically (and add artificially an identity 2-cell), it reminds us of the Beck-Chevalley transformation and looks like:
$$
\xymatrix{
\CAT&\CAT\ar[l]_-= & \Cat^{\op}\ar[l]_-\FF& &\\
&\CAT\ar[u]^-=\ar@/^1.0pc/[lu]^-= \xtwocell[-1,-1]{}\omit& \Cat^{\op}\times\Cat^{\op}\xtwocell[-1,-1]{}\omit\ar[l]^-{\DD\boxtimes\EE}\ar[u]_-\times& \Cat^{\op}\xtwocell[-1,-1]{}\omit\ar@/_1.0pc/[lu]_-= \ar[l]^-\Delta& 
}
$$
There is a similar reasoning for the functor~$r\colon\Hom(\DD\times\EE,\FF)\nach\BiHom((\DD,\EE),\FF).$ In fact,~$r$ is given by the composition:
$$
\2Nat((\DD\boxtimes\EE)\circ\Delta,\FF)\stackrel{\circ\times}{\nach}\2Nat((\DD\boxtimes\EE)\circ\Delta\circ\times,\FF\circ\times)\stackrel{\epsilon}{\nach}\2Nat(\DD\boxtimes\EE,\FF\circ\times)
$$
\noindent
Again, this can be depicted like a Beck-Chevalley transformation as we see in the next diagram:
$$
\xymatrix{
\CAT\xtwocell[1,1]{}\omit& \CAT\xtwocell[1,1]{}\omit\ar[l]_-= & \Cat^{\op}\times\Cat^{\op}\xtwocell[1,1]{}\omit \ar[l]_-{\DD\boxtimes\EE}& &\\
&\CAT \ar@/^1.0pc/[lu]^-=\ar[u]^-= & \Cat^{\op}\ar[l]^-\FF\ar[u]_-\Delta& \Cat^{\op}\times\Cat^{\op}\ar@/_1.0pc/[lu]_-= \ar[l]^-\times& 
}
$$

Let us next show that the composition $l\circ r$ is the identity. For this purpose let us consider the diagram on the left which depicts the value of $l\circ r$ applied to a morphism~$F\colon \DD\times\EE\nach\FF:$
$$
\xymatrix{
\CAT\xtwocell[1,1]{}\omit & \Cat^{\op}\times\Cat^{\op}\xtwocell[1,1]{}\omit \ar[l]_-{\DD\boxtimes\EE}&&  
\CAT& \Cat^{\op}\ar[l]_-\FF& \\
\CAT \ar[u]^-= & \Cat^{\op}\ar[l]^-\FF\ar[u]_-\Delta\xtwocell[1,1]{}\omit&\Cat^{\op}\times\Cat^{\op}\ar@/_1.0pc/[lu]_-= \ar[l]^-\times& 
\CAT\ar[u]^-=& \Cat^{\op}\times\Cat^{\op}\xtwocell[-1,-1]{}\omit\ar[l]^-{\DD\boxtimes\EE}\ar[u]_-\times& \Cat^{\op}\xtwocell[-1,-1]{}\omit\ar@/_1.0pc/[lu]_-= \ar[l]^-\Delta\\
&&\Cat^{\op}\ar[u]_\Delta\ar@/^1.0pc/[ul]^-=& 
&&\Cat^{\op}\times\Cat^{\op}\ar[u]_\times\ar@/^1.0pc/[ul]^-=\xtwocell[-1,-1]{}\omit 
}
$$
More precisely, this value is by definition the pasting of that diagram. The triangular identity for the adjunction~$(\Delta,\times)$ implies that this is just~$F$ as intended. A similar reasoning applies to the diagram on the right concluding the proof of the first statement.

Note that there is a subtlety in the above pasting diagrams. To make this precise let us consider the following commutative diagram describing that pasting and which again shows that~$l\circ r=\id.$ Using the labels of the arrows in that diagram, we have $(l\circ r)(F)=\eta\circ(F\circ\times\circ\Delta)\circ(\epsilon\circ\Delta):$
$$
\xymatrix{
(\DD\boxtimes\EE)\circ\Delta\ar[r]^-{\epsilon\circ\Delta}&(\DD\boxtimes\EE)\circ\Delta\circ\times\circ \Delta\ar[d]_{F\circ\times\circ\Delta}\ar[r]^-{\Delta\circ\eta}&
(\DD\boxtimes\EE)\circ\Delta\ar[d]^F\\
&\FF\circ\times\circ\Delta\ar[r]_-\eta&\FF
}
$$
The square is commutative in this case since~$F$ is a \emph{strict} morphism as opposed to a more general morphism of prederivators. In the case where~$F$ happens to be a morphism of prederivators this square would only commute up to an invertible 2-cell given by the structure maps $\gamma^F$ belonging to~$F.$ This is the reason why we only have an equivalence between the respective categories of (bi)morphisms and not an isomorphism in that case. We leave the details to the reader. 
\end{proof}

In the context of derivators, we want to introduce bimorphisms which preserve homotopy colimits separately in its arguments. For this purpose, let $B\colon(\DD,\EE)\nach\FF$ be a bimorphism of derivators and let us consider functors $u_1\colon J_1\nach K_1$ and $u_2\colon J_2\nach K_2.$ We can apply the formalism of Beck-Chevalley transformations to $\gamma^B_{u_1,\id}$ in order to obtain the natural transformation 
$${\gamma^B_{u_1,\id}}_!\colon (u_1\times \id)_!\circ B\nach B\circ ({u_1}_!\times \id)$$ 
as given by the following pasting:
$$
\xymatrix{
\FF(K_1\times J_2)\xtwocell[1,1]{}\omit&\FF(J_1\times J_2)\xtwocell[1,1]{}\omit\ar[l]_-{(u_1\times \id)_!} & \DD(J_1)\times\EE(J_2)\xtwocell[1,1]{}\omit \ar[l]_-B& &\\
&\FF(K_1\times J_2) \ar@/^1.0pc/[lu]^-=\ar[u]^-{(u_1\times\id)^\ast} & \DD(K_1)\times\EE(J_2)\ar[l]^-B\ar[u]_-{u_1^\ast\times\id}& \DD(J_1)\times\EE(J_2)\ar@/_1.0pc/[lu]_-= \ar[l]^-{{u_1}_!\times\id}& 
}
$$
A similar construction gives the natural transformation ${\gamma^B_{\id, u_2}}_!\colon (\id\times u_2)_!\circ B\nach B\circ(\id\times {u_2}_!).$

\begin{definition}\label{def_bicocont}
Let $\DD,\:\EE,$ and $\FF$ be derivators. A bimorphism $B\colon(\DD,\EE)\nach\FF$ \emph{preserves homotopy left Kan extensions in the first} or \emph{the second variable} if the natural transformations 
$${\gamma^B_{u_1,\id}}_!\colon (u_1\times \id)_!\circ B\nach B\circ ({u_1}_!\times \id)\qquad\mbox{or}\qquad {\gamma^B_{\id, u_2}}_!\colon (\id\times u_2)_!\circ B\nach B\circ(\id\times {u_2}_!)$$
are isomorphisms for all functors $u_1\colon J_1\nach K_1$ or all functors $u_2\colon J_2\nach K_2$ respectively.
\end{definition}

For simplicity we will also say that a morphism of derivators is \emph{cocontinuous in both variables} if it preserves homotopy left Kan extensions in the first and the second variable. This notion will be important in the context of adjunctions of two variables between derivators (cf.\ Subsection \ref{subsection_adjunctionsoftwo}).

Later, in the context of distributors, we will need homotopy coends (with parameters). So, let us give their construction and also establish the basic result which we refer to as `the Fubini theorem'. As an intermediate step, let us recall the notion of the \emph{twisted arrow category} associated to a category. Let $K\in\Cat$ and let us consider the associated functor $\hhom_K(-,-)\colon K^{\op}\times K\nach \Set.$ As a special case of a set-valued functor, $\hhom_K(-,-)$ has an associated category of elements which is a discrete Grothendieck opfibration over $K^{\op}\times K.$ The \emph{twisted arrow category of}~$K$ which will be denoted by $Ar(K)^{\tw}$ is the opposite of this category of elements. Thus, an object in this category is just a morphism $f\colon k_0\nach k_1$ while a morphism $f\nach f'$ is a commutative diagram as in:
$$
\xymatrix{
k_0\ar[r]^-{f}\ar[d]&k_1\\
k'_0\ar[r]_{f'}&k'_1\ar[u]
}
$$
This category comes equipped with a functor $(t,s)\colon Ar(K)^{\tw}\to K^{\op}\times K$ which sends an object $f\colon k_0\to k_1$ to~$(k_1,k_0)$. By construction, this functor is a discrete Grothendieck fibration. 

\begin{definition}
Let $\DD$ be a derivator and let $J,\:K,$ and $L$ be small categories. The \emph{homotopy coend functor} $\int^K\colon\DD(J\times K^{\op}\times K\times L)\nach \DD(J\times L)$ is defined by:
$$\int^K\colon\DD(J\times K^{\op}\times K\times L)\stackrel{(t,s)^\ast}{\nach}\DD(J\times Ar(K)^{\tw}\times L)\stackrel{{\pr}_!}{\nach}\DD(J\times L)$$
\end{definition}

Given a morphism of derivators $F\colon\DD\nach\DD'$ we can use the formalism of Beck-Chevalley transformations in order to obtain a canonical map
$$\int^KF(X)\nach F(\int^K X)$$
for $X\in\DD(J\times K^{\op}\times K\times L).$ Let us say that~$F$ \emph{preserves homotopy coends} if this map happens to be an isomorphism for all~$X$ and all~$K.$ We know from \cite[Proposition 2.4]{groth_derivator} that a morphism which preserves homotopy colimits already preserves homotopy left Kan extensions. This implies immediately the first statement of the next lemma.

\begin{lemma}\label{lemma_cocontcoend}
A homotopy colimit preserving morphism between derivators also preserves homotopy coends. In particular, homotopy coends are calculated pointwise. Similarly, a bimorphism which preserves homotopy colimits separately in each variable also preserves homotopy coends separately in each variable.
\end{lemma}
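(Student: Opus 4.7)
The first statement is essentially a matter of unpacking the definition. Since $\int^K = \pr_!\circ (t,s)^\ast$, the canonical comparison $\int^K F(X)\nach F(\int^K X)$ is obtained as the pasting of two Beck-Chevalley 2-cells: the one comparing $F$ with precomposition along $\id_J\times (t,s)\times\id_L$, which is just the structural isomorphism $\gamma^F_{(t,s)}$ and hence invertible for any morphism of derivators, and the one comparing $F$ with $\pr_!$, namely ${\gamma^F_\pr}_!$. By the cited Proposition 2.4 of \cite{groth_derivator}, any homotopy colimit preserving morphism preserves all homotopy left Kan extensions, so ${\gamma^F_\pr}_!$ is an isomorphism. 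The nice behavior of Beck-Chevalley transformations under pasting (Lemma 1.18 of \cite{groth_derivator}) ensures the composite is the canonical map, which is therefore invertible.

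For the pointwise statement, the plan is to apply this together with the pointwise formula for homotopy left Kan extensions (axiom (Der4)) to the projection square
$$
\xymatrix{
Ar(K)^{\tw} \ar[r]^-q \ar[d]_\pi & J\times Ar(K)^{\tw}\times L \ar[d]^{\pr}\\
e \ar[r]_{(j,l)} & J\times L
}
$$
which is Cartesian of the type for which base change applies. This gives $(j,l)^\ast\pr_!\cong\pi_! q^\ast$; combining with the evident identification of $q^\ast(t,s)^\ast$ with the appropriate restriction to the slice $\{j\}\times K^{\op}\times K\times\{l\}$ yields the desired pointwise formula $(j,l)^\ast\int^K X\cong\int^K (j,l)^\ast_{\mathrm{ext}}X$.

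The bimorphism case is proved by the same strategy applied separately in each variable. Writing the coend in the first variable as $(\pr\times\id)_!\circ((t,s)\times\id)^\ast$, the compatibility of $B$ with precomposition is given by the structural isomorphism $\gamma^B_{(t,s),\id}$, while preservation of the homotopy left Kan extension along $\pr\times\id$ is precisely the content of cocontinuity in the first variable (applying Proposition 2.4 of \cite{groth_derivator} to the partial morphism $B(-,Y)$ for fixed $Y$). The canonical comparison is then again the pasting of these two invertible 2-cells, and dually for the second variable.

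The only delicate point is the bookkeeping that the comparison $\int^K F(X)\to F(\int^K X)$ genuinely decomposes as the paste of the two 2-cells described; this is where pasting-compatibility of Beck-Chevalley transformations is used, but no further work beyond Lemma 1.18 of \cite{groth_derivator} is required.
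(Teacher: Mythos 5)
Your argument for the first and third statements is exactly the intended one: the paper dispatches them in a single sentence by observing that $\int^K=\pr_!\circ(t,s)^\ast$, that any morphism of derivators commutes with $(t,s)^\ast$ up to its (invertible) structure isomorphism, and that preservation of homotopy colimits upgrades to preservation of all homotopy left Kan extensions by the cited Proposition 2.4 --- applied, in the bimorphism case, to the partial morphisms $B(-,Y)\colon\DD\to\FF^{M}$ just as you do. Where you genuinely diverge is the ``calculated pointwise'' claim. The paper's ``in particular'' is meant to be an instance of the first statement: the evaluation morphism $(j,l)^\ast\colon\DD^{J\times L}\to\DD$ is a left adjoint (it has the right adjoint $(j,l)_\ast$ levelwise), hence preserves homotopy colimits, hence preserves homotopy coends --- which is verbatim the pointwise formula. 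You instead argue directly with (Der4) applied to the strict fiber square over $(j,l)$. That works, but note that (Der4) is stated for \emph{comma} squares, and your square is a strict pullback; to use it you need the additional (standard) observation that the fiber inclusion $Ar(K)^{\tw}\cong\{j\}\times Ar(K)^{\tw}\times\{l\}\hookrightarrow \pr/(j,l)\cong (J/j)\times Ar(K)^{\tw}\times(L/l)$ is a right adjoint (the slices $J/j$ and $L/l$ have terminal objects) and hence homotopy final, equivalently that base change holds along the Grothendieck opfibration $\pr$. With that remark supplied your route is complete and somewhat more self-contained, at the cost of an extra cofinality input that the paper's one-line deduction avoids.
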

\noindent
The fact that homotopy coends are calculated pointwise can be suggestively written as follows. Given a derivator $\DD$, $X\in\DD(J\times K^{\op}\times K\times L),$ and objects $j\in J$ and $l\in L$ the canonical map
$$\int^KX(j,-,-,l)\nach (\int^K X)(j,l)$$
is an isomorphism.

In the context of derivators, the Fubini-type theorem about iterated homotopy coends takes the following form. 

\begin{lemma}\label{lemma_Fubini}
Let $\DD$ be a derivator, $t\colon J\times K^{\op}\times K\times L^{\op}\times L\times M\cong J\times (K\times L)^{\op}\times (K\times L)\times M$ the canonical isomorphism, and $X\in\DD(J\times (K\times L)^{\op}\times (K\times L)\times M).$ Then there are natural isomorphisms:
$$\int^K\int^L t^\ast X\cong \int^{K\times L}X\cong \int^L\int ^K t^\ast X$$
\end{lemma}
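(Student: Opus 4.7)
The plan is to recognize all three expressions as different factorizations of a single composite of precomposition and homotopy left Kan extension on the product $Ar(K)^{\tw}\times Ar(L)^{\tw}$, then exploit the functoriality of both ingredients.

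The first key ingredient is a canonical isomorphism of small categories $\varphi\colon Ar(K)^{\tw}\times Ar(L)^{\tw}\stackrel{\cong}{\nach} Ar(K\times L)^{\tw}$ sending a pair of arrows $(f\colon k_0\nach k_1,\,g\colon l_0\nach l_1)$ to the componentwise arrow $(f,g)\colon(k_0,l_0)\nach(k_1,l_1)$. A direct check (using that morphisms in a product are pairs of morphisms) shows this is an isomorphism, and that modulo the evident shuffle $\sigma\colon K^{\op}\times K\times L^{\op}\times L\stackrel{\cong}{\nach}(K\times L)^{\op}\times(K\times L)$ the source-target functors agree: $(t,s)_{K\times L}\circ\varphi=\sigma\circ((t,s)_K\times(t,s)_L)$. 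Padding with identities on $J$ and $M$ (and observing that the shuffle $t$ of the statement is precisely $\id_J\times\sigma\times\id_M$), this yields
$$(\id\times(t,s)_{K\times L}\times\id)^{\ast}\cong(\id\times\varphi^{-1}\times\id)^{\ast}\circ\left(\id\times((t,s)_K\times(t,s)_L)\times\id\right)^{\ast}\circ t^{\ast}.$$

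The second key ingredient is that the total projection $\pr'\colon J\times Ar(K)^{\tw}\times Ar(L)^{\tw}\times M\nach J\times M$ factors both as $\pi_L\circ\pi_K$ and as $\pi_K\circ\pi_L$, where $\pi_K$ and $\pi_L$ project away $Ar(K)^{\tw}$ and $Ar(L)^{\tw}$ respectively. Since homotopy left Kan extensions compose, $\pr'_{!}\cong(\pi_L)_!\circ(\pi_K)_!\cong(\pi_K)_!\circ(\pi_L)_!$. Absorbing the isomorphism $(\id\times\varphi^{-1}\times\id)^{\ast}$ into $(\pr_{K\times L})_!$ (legitimate since left Kan extension along an isomorphism is precomposition by its inverse) identifies $(\pr_{K\times L})_!\circ(\id\times\varphi^{-1}\times\id)^{\ast}$ with $\pr'_{!}$, and combining with the first step gives
$$\int^{K\times L}X\cong(\pi_L)_!\,(\pi_K)_!\left(\id\times((t,s)_K\times(t,s)_L)\times\id\right)^{\ast}t^{\ast}X.$$

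It remains to interleave $(\pi_K)_!$ past the source-target precomposition for $L$ (and dually) to match the definition of the iterated coend. These two operations act on disjoint tensor factors of the domain, so they fit into a commutative square of product type, for which the Beck-Chevalley mate is automatically an isomorphism in any derivator: this is the standard fact that Kan extension along a product of functors decomposes as iterated Kan extension in each coordinate, or equivalently that Kan extension in one factor commutes with precomposition in another factor. Applying this swap converts the displayed right-hand side into the definition of $\int^L\int^K t^{\ast}X$; the symmetric argument, using the other factorization of $\pr'$, yields $\int^K\int^L t^{\ast}X$. The main technical obstacle is bookkeeping: aligning the many shuffle permutations of factors and verifying that the $2$-cells obtained from $\varphi$, the projection factorization, and the Beck-Chevalley mate assemble into the canonical structural isomorphisms of the homotopy coend, so that the resulting isomorphisms are genuinely natural in $X$.
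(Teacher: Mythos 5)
Your proof is correct and follows essentially the same route as the paper: the paper's entire argument is your first key ingredient, namely the canonical isomorphism $Ar(K\times L)^{\tw}\cong Ar(K)^{\tw}\times Ar(L)^{\tw}$ compatible with the source--target functors, from which it declares the lemma to "follow immediately." The remaining steps you spell out (composing Kan extensions along the factored projection and interchanging Kan extension in one factor with precomposition in a disjoint factor, which is the standard homotopy exactness of product squares underlying \cite[Example 2.13]{groth_derivator}) are exactly the bookkeeping the paper leaves implicit.
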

\begin{proof}
This follows immediately from the observation that there is a canonical isomorphism between $Ar(K\times L)^{\tw}$ and $Ar(K)^{\tw}\times Ar(L)^{\tw}$ which is compatible with the source and target maps in the sense that the following diagram commutes:
$$
\xymatrix{
Ar(K)^{\tw}\times Ar(L)^{\tw}\ar[d]\ar[r]& Ar(K\times L)^{\tw}\ar[d]\\
K^{\op}\times K\times L^{\op}\times L\ar[r]_-t&  K^{\op}\times L^{\op}\times K\times L
}
$$
\end{proof}

\subsection{Adjunctions of two variables}\label{subsection_adjunctionsoftwo}

Our next aim is to introduce the notion of an \emph{adjunction of two variables between (pre)derivators}. This will, in particular, allow us to talk about \emph{closed} monoidal derivators later.

We begin by recalling this notion from classical category theory. Let $\md,\:\me,$ and $\mf$ be categories and let us agree that we call a bifunctor $\otimes\colon\md\times\me\nach\mf$ a \emph{left adjoint of two variables} if there are functors $\Hom_l\colon\md^{\op}\times\mf\nach\me$ and $\Hom_r\colon\me^{\op}\times\mf\nach\md$ and natural isomorphisms as in:
$$
\hhom_{\mf}(X\otimes Y,Z)\quad\cong\quad\hhom_{\me}(Y,\Hom_l(X,Z))\quad\cong\quad \hhom_{\md}(X,\Hom_r(Y,Z))
$$
Interchangeably, we also say that the bifunctor~$\otimes$ is \emph{divisible on both sides} and we denote the `division functors' by $X\backslash Z=\Hom_l(X,Z)$ and $Z/Y=\Hom_r(Y,Z)$ respectively. 

Let us now go back to the context of (pre)derivators and let us consider three prederivators $\DD,\:\EE,$ and $\FF$ together with a bimorphism $\otimes\colon(\DD,\EE)\nach\FF.$ Moreover, let us assume that the functors
$\otimes\colon\DD(J_1)\times\EE(J_2)\nach\FF(J_1\times J_2)$
are divisible on both sides. Thus, for each pair of small categories $J_1,\:J_2\in\Cat$ we obtain functors: 
$$\Hom_l(-,-)\colon\DD(J_1)^{\op}\times\FF(J_1\times J_2)\nach \EE(J_2)\quad\mbox{and}\quad
\Hom_r(-,-)\colon\EE(J_2)^{\op}\times\FF(J_1\times J_2)\nach\DD(J_1)$$

\begin{lemma}\label{lemma_ad2var}
Let $\DD,\:\EE,$ and $\FF$ be prederivators and let $\otimes\colon(\DD,\EE)\nach\FF$ be a bimorphism which is levelwise divisible on both sides. Then there is a unique way to assemble any family of chosen adjoints $\Hom_l(-,-)\colon\DD(J_1)^{\op}\times\FF(J_1\times J_2)\nach \EE(J_2)$ into a \emph{lax} natural transformation $\Hom_l$ such that the following squares commute for all functors $u_1\colon J_1\nach K_1,$ $u_2\colon J_2\nach K_2,$ and objects $X\in\DD(K_1),\:Y\in\EE(K_2),\:Z\in\FF(K_1\times K_2)$:
$$
\xymatrix{
\hhom(X\otimes Y,Z)\ar[r]\ar[d]&\hhom(Y,\Hom_l(X,Z))\ar[dd]^{\gamma^{\Hom_l}_{u_1,\id}}\\
\hhom((u_1\times\id)^\ast(X\otimes Y),(u_1\times\id)^\ast Z)\ar[d]_{\gamma^\otimes_{u_1,\id}}&\\
\hhom(u_1^\ast X\otimes Y,(u_1\times\id)^\ast Z)\ar[r]&\hhom(Y,\Hom_l(u_1^\ast X,(u_1\times\id)^\ast Z))
}
$$
$$
\xymatrix{
\hhom(X\otimes Y,Z)\ar[r]\ar[d]&\hhom(Y,\Hom_l(X,Z))\ar[d]\\
\hhom((\id\times u_2)^\ast(X\otimes Y),(\id\times u_2)^\ast Z)\ar[d]_{\gamma^\otimes_{\id,u_2}}& \hhom(u_2^\ast Y,u_2^\ast\Hom_l(X,Z))\ar[d]^{\gamma^{\Hom_l}_{\id,u_2}}\\
\hhom(X\otimes u_2^\ast Y,(\id\times u_2)^\ast Z)\ar[r]& 
\hhom(u_2^\ast Y,\Hom_l(X,(\id\times u_2)^\ast Z))
}
$$
\end{lemma}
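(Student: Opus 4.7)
The plan is to construct $\gamma^{\Hom_l}_{u_1,\id}$ and $\gamma^{\Hom_l}_{\id,u_2}$ by the mate correspondence (Beck--Chevalley) applied to the structure isomorphisms $\gamma^{\otimes}_{u_1,\id}$ and $\gamma^{\otimes}_{\id,u_2}$ of the bimorphism. Given $u_1\colon J_1\nach K_1$, $X\in\DD(K_1)$ and $Z\in\FF(K_1\times J_2)$, I would define
$$\gamma^{\Hom_l}_{u_1,\id}\colon \Hom_l(X,Z)\nach \Hom_l(u_1^\ast X,(u_1\times\id)^\ast Z)$$
as the transpose under the adjunction $u_1^\ast X\otimes(-)\dashv\Hom_l(u_1^\ast X,-)$ of the composite
$$u_1^\ast X\otimes \Hom_l(X,Z)\xrightarrow{(\gamma^{\otimes}_{u_1,\id})^{-1}}(u_1\times\id)^\ast(X\otimes \Hom_l(X,Z))\xrightarrow{(u_1\times\id)^\ast \epsilon}(u_1\times\id)^\ast Z,$$
where $\epsilon$ is the counit of $X\otimes(-)\dashv \Hom_l(X,-)$ at level $K_1\times J_2$. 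The map $\gamma^{\Hom_l}_{\id,u_2}$ is constructed analogously from $\gamma^{\otimes}_{\id,u_2}$. The general $\gamma^{\Hom_l}_{u_1,u_2}$ is then forced, via the usual decomposition $(u_1\times u_2)=(u_1\times\id)\circ(\id\times u_2)$, to be the pasting of the two pieces above.

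Uniqueness and commutativity of the two displayed squares are a joint consequence of the Yoneda lemma and the adjunction isomorphism: the squares in the statement are exactly the assertion that the right column of each is the adjoint transpose of the path obtained by going down the left column and across the bottom, and this transpose is pinned down by the hom-set isomorphism. So there is at most one choice of $\gamma^{\Hom_l}$ making the squares commute, and the construction above realises it.

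It then remains to verify the coherence conditions for a lax natural transformation, namely normalization at identities and associativity for pairs of composable functors $(v_1,v_2)\circ(u_1,u_2)$, plus compatibility with $(\alpha_1,\alpha_2)\colon(u_1,u_2)\Rightarrow (u_1',u_2')$. These all follow from the corresponding coherences for $\gamma^{\otimes}$ together with the standard mate calculus, which says that the mate of a pasting of 2-cells along a grid of adjunctions is the pasting of the mates (cf.\ \cite[Lemma 1.18]{groth_derivator}); the decomposition of $\gamma^{\Hom_l}_{u_1,u_2}$ into its two single-variable pieces matches the analogous decomposition of $\gamma^{\otimes}_{u_1,u_2}$, so no mixed coherence needs to be checked independently. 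The main obstacle is just notational: being careful about the contravariance of $\Hom_l$ in its first argument (so that $\gamma^{\Hom_l}_{u_1,\id}$ is \emph{extra}natural rather than natural in $X$) and about the fact that only the $\gamma^{\otimes}$ are invertible, so their mates are genuinely lax and not pseudo --- which is exactly why the conclusion asserts only a lax natural transformation.
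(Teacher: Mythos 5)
Your proposal is correct and follows essentially the same route as the paper: the structure maps $\gamma^{\Hom_l}_{u_1,\id}$ and $\gamma^{\Hom_l}_{\id,u_2}$ are obtained as mates (Beck--Chevalley transforms) of the structure isomorphisms of $\otimes$, uniqueness is pinned down by the adjunction isomorphism (equivalently, by tracing the counit at $Y=\Hom_l(X,Z)$), and the lax coherence conditions follow from the compatibility of mates with pasting together with the triangular identities, exactly as in \cite[Lemma 1.18]{groth_derivator}.
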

\begin{proof}
The uniqueness of the natural transformations $\gamma^{\Hom_l}_{u_1,\id}\colon\Hom_l(-,-)\nach\Hom_l(u_1^\ast,(u_1\times\id)^\ast)$ and $\gamma^{\Hom_l}_{\id,u_2}\colon u_2^\ast\Hom(-,-)\nach\Hom_l(-,(\id\times u_2)^\ast)$ follows by taking $Y=\Hom_l(X,Z)$ in either of the diagrams and tracing the adjunction counit through the diagrams. If we spell out what we obtain that way then we see that $\gamma^{\Hom_l}_{u_1,\id}$ is defined as the pasting
$$
\xymatrix{
\EE(K_2)&\FF(J_1\times K_2)\ar[l]_-{u_1^\ast X\backslash-} & \FF(K_1\times K_2) \ar[l]_-{(u_1\times\id)^\ast}&\\
&\EE(K_2) \ar@/^1.0pc/[lu]^-=\ar[u]_-{u_1^\ast X\otimes -}\xtwocell[-1,-1]{}\omit & \EE(K_2)\ar[l]^-=\ar[u]_-{X\otimes -}\xtwocell[-1,-1]{}\omit & \FF(K_1\times K_2)\ar@/_1.0pc/[lu]_-= \ar[l]^-{X\backslash-}\xtwocell[-1,-1]{}\omit 
}
$$
where $X$ lives in $\DD(K_1).$ Similarly, $\gamma^{\Hom_l}_{\id,u_2}$ is obtained by pasting the following diagram 
$$
\xymatrix{
\EE(J_2)&\FF(K_1\times J_2)\ar[l]_-{X\backslash-} & \FF(K_1\times K_2) \ar[l]_-{(\id\times u_2)^\ast}&\\
&\EE(J_2) \ar@/^1.0pc/[lu]^-=\ar[u]^-{X\otimes -}\xtwocell[-1,-1]{}\omit & \EE(K_2)\ar[l]^-{u_2^\ast}\ar[u]_-{X\otimes -}\xtwocell[-1,-1]{}\omit & \FF(K_1\times K_2)\ar@/_1.0pc/[lu]_-= \ar[l]^-{X\backslash-} 
}
$$
in which $X$ is an object of $\DD(K_1).$ Thus, the corresponding natural transformations are obtained by certain Beck-Chevalley transformations applied to the structure morphisms of $\otimes$. The good behavior of Beck-Chevalley transformations with respect to pasting (cf.\ \cite[Lemma 1.18]{groth_derivator}) and the triangular identities for adjunctions imply that the constructed natural transformations satisfy certain coherence conditions. But these coherence conditions are precisely the ones imposed on lax natural transformations as intended.
\end{proof}
\noindent
Again, the laxness in the statement refers to the fact that the natural transformations which were constructed are, in general, not invertible. We will come back to this in the context of derivators. And, of course, there is a similar result for arbitrary levelwise chosen adjoints $\Hom_r(-,-).$ 

In this lemma, we were very precise and split the construction of the structure morphisms in the two cases $\gamma^{\Hom_l}_{u_1,\id}$ and $\gamma^{\Hom_l}_{\id,u_2}.$ The point is that these two natural transformations play significantly different roles in the case of adjunctions of two variables between derivators as we want to discuss next. Motivated by the notion of a left adjoint morphism between derivators we give the following definition.

\begin{definition}
A bimorphism $\otimes\colon(\DD,\EE)\nach\FF$ between derivators is a \emph{left adjoint of two variables} (or \emph{divisible on both sides}) if $\otimes\colon\DD(J_1)\times\EE(J_2)\nach\FF(J_1\times J_2)$ is divisible on both sides for all $J_1,J_2\in\Cat$ and if $\otimes$ preserves homotopy left Kan extensions separately in each variable. 
\end{definition}

In the context of this definition, Lemma \ref{lemma_ad2var} guarantees that associated to the bimorphism $\otimes$ we can construct two lax transformations $\Hom_l(-,-)$ and $\Hom_r(-,-).$ Let us again focus on the case of $\Hom_l(-,-)$ but similar remarks apply to $\Hom_r(-,-).$ Using the explicit construction of $\gamma^{\Hom_l}_{\id,u_2}$ in the proof of that lemma and also \cite[Lemma 1.20]{groth_derivator} we see that the following natural transformations are conjugate: 
$$\gamma^{\Hom_l}_{\id,u_2}\colon u_2^\ast\Hom_l(-,-)\nach\Hom_l(-,(\id\times u_2)^\ast)\qquad \mbox{and}\qquad {\gamma^\otimes_{\id, u_2}}_!\colon (\id\times u_2)_!\circ \otimes\nach \otimes\circ(\id\times {u_2}_!)$$
In particular, our assumption that $\otimes$ preserves homotopy left Kan extensions in the second variable is equivalent to the fact that the structure morphisms $\gamma^{\Hom_l}_{\id,u_2}$ of $\Hom_l(-,-)$ are isomorphisms for all functors $u_2\colon J_2\nach K_2.$

Now, using a similar reasoning and again \cite[Lemma 1.20]{groth_derivator} we can deduce that the natural transformation $\gamma^{\Hom_l}_{u_1,\id}$ is conjugate to the following pasting
$$
\xymatrix{
\FF(K_1\times K_2)\xtwocell[1,1]{}\omit&\FF(J_1\times K_2)\xtwocell[1,1]{}\omit\ar[l]_-{(u_1\times\id)_!} & \EE(K_2)\xtwocell[1,1]{}\omit \ar[l]_-{u_1^\ast X\otimes -}&&\\
&\FF(K_1\times K_2) \ar@/^1.0pc/[lu]^-=\ar[u]^-{(u_1\times\id)^\ast} & \EE(K_2)\ar[l]^-{X\otimes -}\ar[u]_-= &\EE(K_2)\ar@/_1.0pc/[lu]_-= \ar[l]^-=& 
}
$$
In particular, $\gamma^{\Hom_l}_{u_1,\id}$ is an isomorphism if and only if the above map $(u_1\times\id)_!(u_1^\ast X\otimes Y)\nach X\otimes Y$ is an isomorphism. But, in general, there is no reason for this map being an isomorphism. Thus, the structure maps $\gamma^{\Hom_l}_{u_1,\id}$ are, in general, not invertible (we will see an example for this phenomenon at the end of this subsection). Let us collect these observations in the following lemma.

\begin{lemma}\label{lemma_ad2varDer}
Let $\otimes\colon(\DD,\EE)\nach\FF$ be a bimorphism between derivators which is levelwise divisible on both sides and let $\Hom_l$ and $\Hom_r$ be the lax transformation guaranteed by Lemma~\ref{lemma_ad2var}. Then~$\otimes$ preserves homotopy left Kan extensions in the second variable if and only if the structure maps $\gamma^{\Hom_l}_{\id,u_2}$ are invertible for all $u_2\colon J_2\nach K_2.$ Similarly, $\otimes$ preserves homotopy left Kan extensions in the first variable if and only if the structure morphisms $\gamma^{\Hom_r}_{u_1,\id}$ are invertible for all $u_1\colon J_1\nach K_1.$
\end{lemma}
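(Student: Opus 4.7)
The plan is to reduce the claim to an instance of the standard fact that conjugate natural transformations are simultaneously invertible (this is the content of \cite[Lemma 1.20]{groth_derivator}). Most of the technical work has in fact been carried out already in the paragraph immediately preceding the statement, so what remains is to organize it into a clean argument.

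First I would recall the explicit pasting formula for $\gamma^{\Hom_l}_{\id, u_2}$ obtained in the proof of Lemma~\ref{lemma_ad2var}: it is built from the structure isomorphism $\gamma^\otimes_{\id, u_2}$ of the bimorphism $\otimes$ together with the units and counits of the two adjunctions $(u_2^\ast X \otimes -,\, u_2^\ast X \backslash -)$ and $(X \otimes -,\, X \backslash -)$ at the appropriate levels. Then I would unpack the Beck-Chevalley construction of ${\gamma^\otimes_{\id, u_2}}_!\colon(\id\times u_2)_!\circ \otimes \nach \otimes\circ(\id\times{u_2}_!)$ (using that the derivator structure gives the left adjoints ${u_2}_!$ and $(\id\times u_2)_!$ and the corresponding unit/counit pairs). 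A direct comparison of these two pastings, together with the triangular identities, shows that $\gamma^{\Hom_l}_{\id, u_2}$ and ${\gamma^\otimes_{\id, u_2}}_!$ are conjugate in the sense of \cite[Lemma 1.20]{groth_derivator}; by that lemma, one is an isomorphism if and only if the other is. Since by definition $\otimes$ preserves homotopy left Kan extensions in the second variable exactly when ${\gamma^\otimes_{\id, u_2}}_!$ is invertible for every $u_2\colon J_2\nach K_2$ (Definition~\ref{def_bicocont}), the first equivalence follows.

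The second equivalence is then proved entirely dually. Replacing $\Hom_l$ by $\Hom_r$ and the second-variable data by first-variable data, Lemma~\ref{lemma_ad2var} produces an analogous pasting formula for $\gamma^{\Hom_r}_{u_1,\id}$, and the same Beck-Chevalley/conjugation argument identifies it with the conjugate of ${\gamma^\otimes_{u_1,\id}}_!$. Invertibility of the latter for all $u_1\colon J_1\nach K_1$ is exactly the condition that $\otimes$ preserve homotopy left Kan extensions in the first variable.

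The only genuinely delicate point is the verification that the two pastings really exhibit a conjugate pair in the sense required by \cite[Lemma 1.20]{groth_derivator}; this is a diagram-chase using the triangular identities for the hom-tensor adjunctions together with the fact (already exploited in the proof of Lemma~\ref{lemma_ad2var}) that Beck-Chevalley transformations are well behaved under pasting \cite[Lemma 1.18]{groth_derivator}. I expect this bookkeeping to be the main (though entirely routine) obstacle; once it is in place, the biconditionals are immediate and the proof can be left to the reader.
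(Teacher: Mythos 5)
Your proposal is correct and follows essentially the same route as the paper: the text preceding the lemma observes, via the explicit pasting formula from Lemma~\ref{lemma_ad2var} and \cite[Lemma 1.20]{groth_derivator}, that $\gamma^{\Hom_l}_{\id,u_2}$ and ${\gamma^\otimes_{\id,u_2}}_!$ are conjugate and hence simultaneously invertible, with the $\Hom_r$ case handled dually, and the lemma is stated precisely as a collection of these observations. The only point worth emphasizing is the asymmetry you implicitly respect: for $\Hom_l$ it is the \emph{second}-variable structure map that is conjugate to a Beck--Chevalley transform of the desired shape (and for $\Hom_r$ the first), while the other structure map $\gamma^{\Hom_l}_{u_1,\id}$ is conjugate to a genuinely different pasting and is in general not invertible.
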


We now turn to examples of adjunctions of two variables for derivators. Here, we will cover the examples of represented derivators. The case of Quillen adjunctions of two variables will be taken up in the next section. 

Let $\otimes\colon\mc\times\md\nach\me$ be a functor of two variables. We can extend $\otimes$ to a (strict) bimorphism $\otimes\colon(\mc,\md)\nach\me$ of the associated represented prederivators. In fact, for a pair of categories $(J_1,J_2)$ let us define $\otimes_{J_1,J_2}\colon\mc^{J_1}\times\md^{J_2}\nach\me^{J_1\times J_2}$ by sending a pair $(X,Y)$ to: 
$$\xymatrix{
X\otimes Y\colon J_1\times J_2\ar[r]^-{X\times Y}&\mc\times\md\ar[r]^-\otimes&\me
}
$$
Let us call this bimorphism $\otimes$ the \emph{bimorphism represented by} $\otimes.$

\begin{proposition}\label{prop_representedadjunction}
Let $\mc,\:\md$ be complete categories, $\me$ a category, and $\otimes\colon\mc\times\md\nach\me$ a left adjoint of two variables. The represented bimorphism $\otimes\colon(\mc,\md)\nach\me$ is then levelwise divisible on both sides. In particular, adjunctions of two variables between bicomplete categories induce adjunctions of two variables between represented derivators.
\end{proposition}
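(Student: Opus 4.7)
The plan is to construct the right adjoints pointwise via ends and then verify the adjunctions by the standard Yoneda/end manipulation. Fixing $J_1,J_2\in\Cat$ and $X\in\mc^{J_1}$, since $\md$ is complete I would define $X\backslash -\colon\me^{J_1\times J_2}\to\md^{J_2}$ by
$$(X\backslash Z)(j_2)\;=\;\int_{j_1\in J_1}\Hom_l(X(j_1),Z(j_1,j_2)),$$
and, dually, using completeness of $\mc$, for $Y\in\md^{J_2}$ set $(Z/Y)(j_1)=\int_{j_2\in J_2}\Hom_r(Y(j_2),Z(j_1,j_2))$. Functoriality in the remaining parameter and in $Z$ is automatic from functoriality of ends.

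The adjunction isomorphism should then unfold as
\begin{align*}
\hhom_{\me^{J_1\times J_2}}(X\otimes Y,Z)
&\cong \int_{(j_1,j_2)}\hhom_\me\bigl(X(j_1)\otimes Y(j_2),Z(j_1,j_2)\bigr)\\
&\cong \int_{j_2}\int_{j_1}\hhom_\md\bigl(Y(j_2),\Hom_l(X(j_1),Z(j_1,j_2))\bigr)\\
&\cong \int_{j_2}\hhom_\md\bigl(Y(j_2),(X\backslash Z)(j_2)\bigr)
\;\cong\; \hhom_{\md^{J_2}}(Y,X\backslash Z),
\end{align*}
using in order the end formula for natural transformations, the pointwise two-variable adjunction applied inside the end, Fubini together with the continuity of $\hhom_\md(Y(j_2),-)$, and the end formula once more. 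The parallel chain using $\Hom_r$ produces the other adjunction, establishing levelwise divisibility.

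For the ``in particular'' clause, when $\mc,\md,\me$ are all bicomplete the represented prederivators are derivators, so it remains to verify that the represented bimorphism preserves homotopy left Kan extensions separately in each variable. For $u_1\colon J_1\to K_1$, the comma category $(u_1\times\id_{J_2})/(k_1,j_2)$ splits as $(u_1/k_1)\times(J_2/j_2)$ with $\id_{j_2}$ terminal in the second factor, so the pointwise formula for $(u_1\times\id)_!$ reduces to a colimit of $X(j_1)\otimes Y(j_2)$ over $u_1/k_1$. Since $-\otimes Y(j_2)$ is a left adjoint, with right adjoint $\Hom_r(Y(j_2),-)$, it preserves this colimit and produces an isomorphism with $((u_1)_!X\otimes Y)(k_1,j_2)$; the symmetric argument handles the second variable. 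The step requiring the most care will be verifying that this pointwise isomorphism really is the Beck--Chevalley transformation $({\gamma^\otimes_{u_1,\id}})_!$ of Definition \ref{def_bicocont} rather than merely some abstract isomorphism, but because the represented bimorphism is strict this should reduce to a routine unwinding of the adjunction counits and of the projection-induced structure maps defining the represented bimorphism.
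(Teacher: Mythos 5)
Your construction of the division functors and the verification of the levelwise adjunction coincide with the paper's: the same end formula $(X\backslash Z)(j_2)=\int_{j_1}\Hom_l(X(j_1),Z(j_1,j_2))$ (and dually for $Z/Y$, using completeness of $\md$ and $\mc$ respectively), and the same chain of isomorphisms via the end formula for natural transformations, the pointwise two-variable adjunction, Fubini, and continuity of corepresentable functors. The divergence is in the ``in particular'' clause. The paper explicitly names two possible routes and carries out the one you do not take: by Lemma~\ref{lemma_ad2varDer}, cocontinuity of $\otimes$ in a given variable is equivalent to invertibility of the conjugate structure maps of the internal homs, e.g.\ $\gamma^{\Hom_l}_{\id,u_2}\colon u_2^\ast\Hom_l(X,Z)\nach\Hom_l(X,(\id\times u_2)^\ast Z)$, and these are isomorphisms at once because ends with parameters are computed pointwise. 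Your route --- splitting the comma category $(u_1\times\id)/(k_1,j_2)$ as $(u_1/k_1)\times(J_2/j_2)$, using terminality of $\id_{j_2}$, and invoking that $-\otimes Y(j_2)$ preserves colimits --- is correct and is exactly the first alternative the paper mentions; its cost is the issue you yourself flag, namely identifying the resulting pointwise isomorphism with the Beck--Chevalley transformation ${\gamma^\otimes_{u_1,\id}}_!$ of Definition~\ref{def_bicocont}, which requires chasing the adjunction counits through the pointwise Kan-extension formula (routine, but genuinely part of the proof). The paper's route buys this identification for free, since the map whose invertibility must be checked is already the canonical one, and the pointwise computation of parametrized ends makes it visibly an isomorphism. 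Either way the argument closes; just be aware that one should also record the symmetric statement (via $\Hom_r$) for the other variable, as both checks are needed for an adjunction of two variables.
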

\begin{proof}
Let us content ourselves by showing that the represented bimorphism $\otimes\colon(\md,\me)\nach\mf$ is levelwise divisible on the left. Thus, we give the construction of $\Hom_l(-,-)$ and the natural isomorphism expressing one half of the fact that we have an adjunction of two variables. So, let us consider a pair of categories $(J_1,J_2)$ and let us construct a right adjoint 
$$\Hom_l(-,-)\colon{(\mc^{J_1})}^{\op}\times \me^{J_1\times J_2}\nach\md^{J_2}.$$
Using $(\mc^{J_1})^{\op}\cong(\mc^{\op})^{J_1^{\op}},$ as an intermediate step we can associate a pair $(X,Z)$ to the functor
$$\Hom_l(-,-)\circ(X\times Z)\colon J_1^{\op}\times J_1\times J_2\nach\mc^{\op}\times\me\nach\md.$$
Here, $\Hom_l\colon\mc^{\op}\times\me\nach\md$ is a functor expressing the fact that $\otimes$ is divisible on the left. Forming the end over the category $J_1$ we can define $\Hom_l(X,Z)\colon J_2\nach\md$ by:
$$\Hom_l(X,Z)(-)=\int_{j_1}\Hom_l(X(j_1),Z(j_1, -))$$
Let us check that this gives us the desired adjunction. For this purpose let us consider a functor $Y\in\md^{J_2}.$ Using the fact that natural transformations are obtained by a further end construction we can make the following calculation:
\begin{eqnarray*}
\hhom_{\me^{J_1\times J_2}}(X\otimes Y,Z)&\cong&\int_{(j_1,j_2)}\hhom_{\me}(X(j_1)\otimes Y(j_2),Z(j_1,j_2))\\
&\cong&\int_{(j_1,j_2)}\hhom_{\md}\big(Y(j_2),\Hom_l(X(j_1),Z(j_1,j_2))\big)\\
&\cong&\int_{j_2}\hhom_{\md}\big(Y(j_2),\int_{j_1}\Hom_l(X(j_1),Z(j_1,j_2))\big)\\
&=&\int_{j_2}\hhom_{\md}\big(Y(j_2),\Hom_l(X,Z)(j_2)\big)\\
&\cong&\hhom_{\md^{J_2}}\big(Y,\Hom_l(X,Z)\big)
\end{eqnarray*}
The third isomorphism follows from the Fubini-type theorem for ends and the fact that corepresented functors are end preserving, the second one is the adjunction isomorphism at the level of categories, while the first and the last one are given by the fact that natural transformations can be expressed as ends. This concludes the proof that the bimorphism $\otimes$ is levelwise divisible on both sides.  

The statement in the context of bicomplete categories can be proved in two ways. One way is to check that the bimorphism $\otimes\colon(\mc,\md)\nach\me$ is cocontinuous in both variables. By Lemma \ref{lemma_ad2varDer} we could equivalently show that the canonical map $u_2^\ast\Hom_l(X,Z)\nach\Hom_l(X,(\id\times u_2)^\ast Z)$ is an isomorphism. But this is true since ends with parameters are calculated pointwise.
\end{proof}

We use this example and the details of the proof to illustrate that the structure maps belonging to $\Hom_l(-,-)$ are not necessarily isomorphisms, i.e., that we only obtain \emph{lax} natural transformations as opposed to \emph{pseudo}-natural transformations. So, let us consider a functor $u_1\colon J_1\nach K_1,$ two diagrams $X\colon K_1\nach\mc$ and $Z\colon K_1\times J_2\nach\me$ and let us have a look at the diagram:
$$
\xymatrix{
\int_{k_1}\Hom_l\big(X(k_1),Z(k_1,-)\big)\ar@/^1.4pc/[rrd]^-{\pr_{u_1(j_1'),u_1(j_1'')}}\ar@{-->}[d]&&\\
\int_{j_1}\Hom_l\big(X(u_1(j_1)),Z(u_1(j_1),-)\big)\ar[rr]_-{\pr_{j_1',j_1''}}&&
\Hom_l\big(X(u_1(j_1')),Z(u_1(j_1''),-)\big)
}
$$
The upper left object is $\Hom_l(X,Z)$ and the lower left one is $\Hom_l(u_1^\ast X,(u_1\times \id)^\ast Z).$ The solid morphisms belong to the universal wedges of the respective end constructions. By the universal property of the lower wedge there is a unique dashed arrow as indicated which is compatible with all projection morphisms and this dashed arrow gives us
$$\gamma^{\Hom_l}_{u_1,\id}\colon \Hom_l(-,-)\nach\Hom_l(u_1^\ast(-),(u_1\times \id)^{\ast}(-)).$$  
To give a specific example such that $\gamma^{\Hom_l}_{u_1,\id}$ is not invertible let us consider the following situation. Let $\mc=\md=\me$ be $\Set,$ the category of sets, and let us take the adjunction of two variables given by the Cartesian closedness of $\Set.$ Moreover, let $J_1=J_2=e$ be the terminal category and let $u_1=k_1\colon e\nach K_1$ classify an object $k_1\in K_1.$ Then,~$X$ and~$Z$ would just be functors $K_1\nach\Set$ and the natural transformation $\gamma^{\Hom_l}_{u_1,\id}$ evaluated at~$X$ and~$Z$ is the evaluation map $k_1^\ast\colon\nat(X,Z)\nach \hhom_{\Set}(X(k_1),Z(k_1)$ which certainly is not an isomorphism in general.

\section{Monoidal derivators}\label{section_monoidal}

\subsection{Monoidal prederivators, monoidal morphisms, and monoidal transformations}\label{subsection_monoidalder}

Emphasizing similarity to the fact that a monoidal category (\cite{eilenberg_closed} or \cite{maclane}) is just a monoidal object (called a pseudo-monoid in \cite{day_hopfalgebroids}) in the Cartesian 2-category $\CAT,$ we could just say that a monoidal prederivator is a monoidal object in the Cartesian 2-category $\PDer$. We prefer to make this more explicit:

\begin{definition}
Let $\EE$ be a prederivator. A \emph{monoidal structure on} $\EE$ is a 5-tuple $(\otimes,\mathbb{S},a,l,r)$ consisting of two morphisms of prederivators
$$\otimes\colon\EE\times\EE\nach\EE\qquad\mbox{and}\qquad \mathbb{S}\colon e\nach\EE$$
and natural isomorphisms $l,\:a,$ and $r$ as indicated in the diagrams: 
$$
\xymatrix{
e\times\EE\ar[r]^{\SSS\times\id}\ar@/_0.8pc/[rd]_-\cong&\EE\times\EE\ar[d]^\otimes&&
\EE\times\EE\times\EE\ar[r]^-{\id\times\otimes}\ar[d]_{\otimes\times\id}\ar@{}[dr]|{\Longleftarrow}&\EE\times\EE\ar[d]^\otimes&&
\EE\times\EE\ar[d]_\otimes&\EE\times e\ar@/^0.8pc/[ld]^\cong\ar[l]_-{\id\times\SSS} \xtwocell[+1,-1]{}\omit\\
&\EE\xtwocell[-1,-1]{}\omit,&&
\EE\times\EE\ar[r]_\otimes&\EE,&&
\EE&
}
$$
This structure has to satisfy the usual coherence conditions. A \emph{symmetric monoidal structure on} $\EE$ is a $6$-tuple $(\otimes,\mathbb{S},a,l,r,t)$ where $(\otimes,\mathbb{S},a,l,r)$ is a monoidal structure and $t$ is a natural isomorphism as in 
$$
\xymatrix{
\EE\times\EE\ar[r]^T\ar@/_1.0pc/[dr]_\otimes\xtwocell[1,1]{}\omit&\EE\times\EE\ar[d]^\otimes\\
&\EE
}
$$
which satisfies additional coherence conditions as specified in \cite{maclane,borceux2}. A \emph{monoidal} resp.\ \emph{symmetric monoidal prederivator} is a prederivator endowed with a monoidal resp.\ symmetric monoidal structure.
\end{definition}

We will often denote a monoidal prederivator simply by $(\EE,\otimes,\SSS)$ or even by $\EE.$ This definition gives us the \emph{internal variant} of a monoidal prederivator. Using the concept of bimorphisms and the obvious generalizations to `more arguments' (which could be called \emph{trimorphisms} etc.) we can equivalently consider monoidal prederivators in their \emph{external variant}. Recall from Proposition~\ref{prop_bimorphism} that the product in~$\PDer$ corepresents the bimorphism functor. A similar result can be obtained for trimorphisms etc.\ in which case the adjunctions $(\Delta,\times)\colon\Cat\rightharpoonup\Cat^{\times n},\;n\geq 3,$ play a key role (see the proof of Proposition~\ref{prop_bimorphism}). As an upshot we deduce that a monoidal structure on a prederivator~$\EE$ can be equivalently given by a bimorphism $\otimes\colon(\EE,\EE)\to\EE$ and a unit morphism $\SSS\colon e\to\EE$ together with certain specified coherence isomorphisms. In the context of derivators this external variant is more convenient since we want to include certain compatibility conditions with respect to homotopy left Kan extensions (cf.\ Definition \ref{def_monder}).

The prederivator $e$ is the prederivator represented by the terminal category~$e$. So, the 2-categorical Yoneda lemma provides a natural isomorphism of categories
$$Y\colon\Hom_{\PDer}^{\strict}(e,\EE)\stackrel{\cong}{\nach}\EE(e).$$ 
Thus, in particular, a strict morphism $e\nach\EE$ amounts to the choice of an object in $\EE(e).$  A not necessarily strict morphism $e\nach\EE$ contains more information but see Lemma \ref{lem_strictunit} (this reflects the fact that we should work with the \emph{bicategorical} Yoneda lemma as opposed to the 2-\emph{categorical} one since we are working with pseudo-natural transformations instead of the more restrictive 2-natural transformations).

Let us briefly unravel the definition of a monoidal structure on a prederivator to convince ourselves that this actually is the notion we wanted to axiomatize. So, let $\EE$ be a (symmetric) monoidal prederivator and let $J$ be a category. Then, by definition, we have a functor $\otimes\colon\EE(J)\times\EE(J)\nach\EE(J),$ an object $\SSS(J)\in\EE(J),$ and also natural transformations which endow $\EE(J)$ with the structure of a (symmetric) monoidal category. Moreover, for a functor $u\colon J\nach K$ we have an induced natural isomorphism~$\gamma^\otimes_u$ as indicated in:
$$
\xymatrix{
\EE(K)\times\EE(K)\ar[r]^-\otimes\ar[d]_{u^\ast\times u^\ast}&\EE(K)\ar[d]^{u^\ast}\\
\EE(J)\times\EE(J)\ar[r]_-\otimes&\EE(J)\xtwocell[-1,-1]{}\omit
}
$$
Similarly, since $\mathbb{S}\colon e\nach\EE$ is a morphism of derivators we have a canonical natural isomorphism $\gamma^{\SSS}_u$ as in the following diagram:
$$
\xymatrix{
e\ar[r]^-{\SSS(K)}\ar@/_1.0pc/[dr]_{\SSS(J)}&\EE(K)\ar[d]^{u^\ast}\\
&\EE(J)\xtwocell[-1,-1]{}\omit
}
$$
It is easy to check that these two natural isomorphisms endow $u^\ast\colon\EE(K)\nach\EE(J)$ with the structure of a strong (symmetric) monoidal functor. For example, the definition of a natural transformation between morphisms of prederivators implies that the following diagram commutes:
$$
\xymatrix{
(\otimes\circ(\otimes\times\id))\circ u^\ast\ar[r]^a\ar[d]_\gamma&(\otimes\circ(\id\times\otimes))\circ u^\ast\ar[d]^\gamma\\
u^\ast\circ(\otimes\circ(\otimes\times\id))\ar[r]_a& u^\ast\circ (\otimes\circ(\id\times\otimes))
}
$$
Evaluating this at three objects $X,\:Y,$ and $Z\in\EE(K)$ gives us already one of the coherence conditions as imposed on a strong (symmetric) monoidal structure on a functor:
$$
\xymatrix{
(u^\ast X\otimes u^\ast Y)\otimes u^\ast Z\ar[r]^a\ar[d]_\gamma&u^\ast X\otimes (u^\ast Y\otimes u^\ast Z)\ar[d]^\gamma\\
u^\ast(X\otimes Y)\otimes u^\ast Z\ar[d]_\gamma&u^\ast X\otimes u^\ast(Y\otimes Z)\ar[d]^\gamma\\
u^\ast((X\otimes Y)\otimes Z)\ar[r]_a&u^\ast(X\otimes (Y\otimes Z))
}
$$
The other coherence axioms are checked similarly. Moreover, there is a corresponding result for natural transformations. Let $\alpha\colon u\nach v$ be a natural transformation of functors $J\nach K.$ Then it follows immediately that $\alpha^\ast\colon u^\ast\nach v^\ast$ is a monoidal transformation with respect to the canonical monoidal structures. For example the fact that $\mathbb{S}\colon e\nach \EE$ is a morphism of prederivators encodes that $\alpha^\ast$ is compatible with the unitality constraints of $u^\ast$ and $v^\ast.$ In fact, the commutative square on the left reduces to the triangle on the right:
$$
\xymatrix{
\SSS_Ju^\ast\ar[d]_{\alpha^\ast}\ar[r]^\gamma&u^\ast\SSS_K\ar[d]^{\alpha^\ast}&&
\SSS(J)\ar[r]\ar@/_0.8pc/[dr]&u^\ast(\SSS(K))\ar[d]^{\alpha^\ast}\\
\SSS_Jv^\ast\ar[r]_\gamma&v^\ast\SSS_K&&
&v^\ast(\SSS(K))
}
$$
Thus, a (symmetric) monoidal prederivator~$\EE$ factors canonically as
$$\EE\colon\Cat^{\op}\nach\MonCAT\nach\CAT\qquad\mbox{respectively}\qquad\EE\colon\Cat^{\op}\nach\sMonCAT\nach\CAT.$$
Here, $\MonCAT$ denotes the 2-category of monoidal monoidal categories with (strong) monoidal functors and monoidal transformations. Similarly, $\sMonCAT$ is the 2-category of symmetric monoidal categories. Note that the dual $\EE^{\op}$ of a monoidal prederivator $\EE$ is also canonically endowed with a monoidal structure. Before we turn to some interesting examples, let us quickly give the adapted classes of morphisms and natural transformations. Again, the same terminology will also apply for derivators.

\begin{definition}
Let $\EE$ and $\EE'$ be monoidal prederivators. A \emph{monoidal structure on a morphism} $F\colon\EE\nach\EE'$ \emph{of prederivators} is a pair of natural transformations
$$
\xymatrix{
\EE\times\EE\ar[r]^\otimes\ar[d]_{F\times F}&\EE\ar[d]^F&&
e\ar[r]^{\mathbb{S}}\ar@/_1.0pc/[rd]_-{\mathbb{S}}&\EE\ar[d]^F\\
\EE'\times\EE'\ar[r]_\otimes&\EE'\xtwocell[-1,-1]{}\omit&&
&\EE'\xtwocell[-1,-1]{}\omit
}
$$
such that the usual coherence conditions are satisfied. A monoidal structure is called \emph{strong} if these natural transformations are isomorphisms. A \emph{(strong) monoidal morphism} $F\colon\EE\nach\EE'$ between monoidal prederivators is a morphism endowed with a (strong) monoidal structure. 
\end{definition}

There is an obvious variant for the case of symmetric monoidal prederivators \cite{borceux2} which demands for an additional coherence property but which again will not be made precise. For completeness we include the definition of a monoidal natural transformation.

\begin{definition}
Let $\EE$ and $\EE'$ be monoidal prederivators and let $F,\;G\colon\EE\nach\EE'$ be monoidal morphisms. A natural transformation $\phi\colon F\nach G$ is called \emph{monoidal} if the following two diagrams commute:
$$
\xymatrix{
\otimes\circ (F\times F)\ar[r]\ar[d]_{\phi\times\phi}&F\circ\otimes\ar[d]^\phi&&\mathbb{S}\ar[r]\ar@/_1.0pc/[rd]&F\circ\mathbb{S}\ar[d]^\phi\\
\otimes \circ (G\times G)\ar[r]&G\circ \otimes&&&G\circ\mathbb{S}
}
$$
\end{definition}
\noindent
As in classical category theory, there is no additional assumption on a monoidal transformation of symmetric monoidal functors. Thus, with these notions we have the 2-categories of (symmetric) monoidal prederivators together with the strong monoidal morphisms and monoidal transformations, which are denoted by:
$$
\MonPDer\qquad\mbox{respectively}\qquad\sMonDer
$$

For a summary, let us use the following notation: Given a monoidal 2-category~$\mc,$ let us denote by $\Mon(\mc)$ the 2-category of monoidal objects in $\mc.$ For the case of the Cartesian monoidal 2-category $\CAT$ we have $\Mon(\CAT)=\MonCAT,$ the 2-category of monoidal categories. Thus, we may summarize our discussion by saying that there is the following isomorphism of 2-categories and there is an analogous variant for symmetric monoidal prederivators:
$$
\MonPDer=\Mon(\CAT^{\Cat^{\op}},\times,e)\cong\Mon(\CAT,\times,e)^{\Cat^{\op}}=(\MonCAT)^{\Cat^{\op}}
$$

Let us now turn to monoidal derivators. By definition we want to insist that the monoidal pairing preserves homotopy left Kan extensions separately in both variables (cf.\ Definition \ref{def_bicocont}). Theoretically speaking this rules out certain examples since not all monoidal pairings do have this property but most examples `showing up in nature' (in particular, the closed ones) are covered by this definition.

\begin{definition}\label{def_monder}
A \emph{monoidal derivator} $\EE$ is a derivator~$\EE$ endowed with a monoidal structure such that the monoidal pairing $\otimes\colon(\EE,\EE)\to\EE$ preserves homotopy left Kan extensions separately in each variable.
\end{definition}

In the companion paper \cite{groth_derivator} we saw that important classes of derivators are induced by bicomplete categories and model categories. These classes of example can be extended to the monoidal context. So, let us turn to prederivators represented by a category. Let us recall that given a small category~$J$ the value of the represented prederivator $\mc=y(\mc)$ is the functor category~$\mc^J$ of diagrams of shape~$J$ with values in~$\mc$.

\begin{example}\label{example_representedmonoidal}
The 2-functor $y\colon\CAT\nach\PDer$ sending a category $\mc$ to the represented prederivator $\mc$ preserves 2-products and hence monoidal objects. Thus, we obtain induced 2-functors
$$y\colon\MonCAT\nach\MonPDer\qquad\mbox{and}\qquad y\colon\sMonCAT\nach\sMonPDer.$$
In the internal variant, the monoidal structure on the prederivator represented by a monoidal category $\mc$ sends two objects $X,Y\in\mc^J$ to the composition
$$
\xymatrix{
J\ar[r]^-\Delta&J\times J\ar[r]^{X\times Y}&\mc\times\mc\ar[r]^-\otimes&\mc,
}
$$
where $\Delta$ is again the diagonal functor. The monoidal unit at level~$J$ is given by
$$
\xymatrix{
J\ar[r]^-p&e\ar[r]^-{\mathbb{S}}&\mm.
}
$$
where~$p$ is the unique functor to the terminal category. The monoidal prederivator $\mc=y(\mc)$ is a monoidal derivator if and only if~$\mc$ is bicomplete and the monoidal structure preserves colimits separately in each variable. There is a similar result for symmetric monoidal categories.
\end{example}

The second class of examples of monoidal derivators coming from combinatorial monoidal model categories will be treated in Subsection \ref{subsection_monoidalmodel}. We finish this subsection by giving a refinement of our last example for closed monoidal categories. Since in Subsection \ref{subsection_adjunctionsoftwo} we only discussed adjunctions of two variables for derivators we will again only consider that case.

\begin{definition}
A monoidal derivator~$\EE$ is \emph{biclosed} if the monoidal pairing $\otimes\colon(\EE,\EE)\to\EE$ is a left adjoint of two variables. A symmetric monoidal derivator having this additional property is called a \emph{closed symmetric monoidal derivator.}
\end{definition}

\begin{corollary}
Let $\mc$ be a (bi)closed monoidal, bicomplete category then the associated represented derivator is (bi)closed monoidal.
\end{corollary}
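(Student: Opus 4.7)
The plan is to combine Example \ref{example_representedmonoidal} with Proposition \ref{prop_representedadjunction}. First, since $\mc$ is biclosed monoidal the tensor product $\otimes\colon\mc\times\mc\nach\mc$ is by definition a left adjoint of two variables, so in particular fixing either argument yields a left adjoint and hence a colimit preserving functor. Together with the assumed bicompleteness of $\mc$, this is exactly the hypothesis required in Example \ref{example_representedmonoidal} to conclude that the represented prederivator $y(\mc)$ carries a canonical monoidal derivator structure whose pairing is the represented bimorphism of $\otimes$.

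Next I would invoke Proposition \ref{prop_representedadjunction} directly: since $\otimes\colon\mc\times\mc\nach\mc$ is a left adjoint of two variables between bicomplete categories, the represented bimorphism $\otimes\colon(y(\mc),y(\mc))\nach y(\mc)$ is itself a left adjoint of two variables. Unravelling Definition of a biclosed monoidal derivator, this is exactly the statement to be proved.

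For the symmetric variant I would argue analogously using the symmetric monoidal version of Example \ref{example_representedmonoidal}, noting that in a symmetric monoidal category a single closedness adjunction $X\otimes(-)\dashv[X,-]$ automatically promotes to biclosedness by means of the symmetry constraint, so that Proposition \ref{prop_representedadjunction} again supplies the needed adjunction of two variables between represented derivators. The whole argument thus reduces to assembling pieces already proved, and I do not foresee any serious obstacle: the genuine technical work---constructing the levelwise division functors via pointwise ends and checking compatibility with the prederivator structure maps so that one obtains a (lax) natural transformation rather than just a levelwise adjoint---has already been carried out in the proof of Proposition \ref{prop_representedadjunction}.
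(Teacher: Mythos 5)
Your proposal is correct and follows exactly the route the paper intends: the corollary is stated without proof precisely because it is the assembly of Example \ref{example_representedmonoidal} (biclosedness gives colimit preservation in each variable, so the represented prederivator is a monoidal derivator) with the final assertion of Proposition \ref{prop_representedadjunction} (the represented bimorphism is a left adjoint of two variables). Your remarks on the symmetric case and on where the real technical content lives are accurate as well.
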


\subsection{Monoidal model categories induce monoidal derivators}\label{subsection_monoidalmodel}

Before we turn to monoidal model categories let us recall in some detail how the the derivator $\DD_{\mm}$ associated to a combinatorial model category $\mm$ (cf.\ \cite[Proposition 1.36]{groth_derivator}) is constructed. The point of this slightly lengthy discussion is that it prepares the proof that Brown functors between model categories induce morphisms at the level of derivators (Proposition \ref{prop_brown}).

Recall that combinatorial model categories as introduced by Smith are cofibrantly generated model categories which have an underlying presentable category (for the theory of presentable categories cf.\ the original source \cite{gabrielulmer} but also \cite{adamekrosicky,makkai}). In the construction of the derivator~$\DD_{\mm}$ we use the fact that the diagram categories $\mm^J$ associated to such a model category~$\mm$ can be endowed both with the injective and the projective model structure. The existence of the projective model structure follows from a general lifting result of cofibrantly generated model structures along a left adjoint functor (\cite{hirschhorn}) while the existence of the injective model structure is, for example, shown in \cite[Proposition A.2.8.2]{HTT}. Since both model structures have the same class of weak equivalences, it is not important which one we use in the definition of the value $\DD_{\mm}(J)$ as they have canonically isomorphic homotopy categories:
$$\Ho(\mm_{proj}^J)\cong \Ho(\mm_{inj}^J)$$
Now, for a functor $u\colon J\nach K,$ the induced precomposition functor $u^\ast\colon \mm^K\nach\mm^J$ preserves weak equivalences with respect to both structures. Hence, by the universal property of the localization functor $\gamma\colon\mm^K\nach\Ho(\mm^K)$ we obtain a unique induced functor $u^\ast$ at the level of homotopy categories such that the following diagram commutes on the nose:
$$
\xymatrix{
\mm^K\ar[r]^{u^\ast}\ar[d]_{\gamma}&\mm^J\ar[d]^\gamma\\
\Ho(\mm^K)\ar[r]_{u^\ast}&\Ho(\mm^J)
}
$$
By definition, this induced functor is taken as the value $\DD_{\mm}(u).$ The fact that the localization $\gamma\colon\mm^K\nach\Ho(\mm^K)$ is a 2-localization allows us to complete the definition of~$\DD_{\mm}$ and shows that we have an actual 2-functor. Moreover, this universality in the 2-categorical sense implies that the left or right derived functors of a Quillen functor that preserves weak equivalences is isomorphic to its unique extension. Thus, in our context we obtain the following natural isomorphisms:
$$\LL\!u^\ast\quad\cong\quad u^\ast\quad\cong\quad\RR\!u^\ast.$$

This observation will be useful in the construction of the monoidal derivator underlying a combinatorial monoidal model category. More generally, it allows for the construction of morphisms of derivators induced by Brown functors and hence, in particular, by Quillen functors or Quillen bifunctors. One motivation for the notion of Brown functors is the following. In order to form the derived functor of a --say-- left Quillen functor not all of the defining properties of a left Quillen functor are needed as already emphasized in \cite{hovey,hirschhorn,maltsiniotis_quillen}. Thus, sometimes the following definition is useful (cf.\ also to \cite{dhks_homotopy} and \cite{shulman_composites} where these are called deformable functors and derivable functors, respectively).

\begin{definition}\label{definition_Brown}
Let $\mm$ and $\nn$ be model categories and let $F\colon\mm\nach\nn$ be a functor. $F$ is a \emph{left Brown functor} if $F$ preserves weak equivalences between cofibrant objects. Dually, $F$ is a \emph{right Brown functor} if $F$ preserves weak equivalences between fibrant objects.
\end{definition}
\noindent
As one sees from the constructions in \cite{hovey, hirschhorn}, this suffices to obtain the respective derived functors which again will have the universal property of the respective Kan extensions. In what follows, we will only state and prove the results for left Brown functors (and left Quillen (bi)functors), but also the dual statements hold true.

\begin{proposition}\label{prop_brown}
Let $\mm$ and $\nn$ be combinatorial model categories and let $F\colon\mm\nach\nn$ be a left Brown functor. Then by forming left derived functors we obtain a morphism of derivators $\LL\! F\colon\DD_{\mm}\nach\DD_{\nn}.$ In particular, this is the case for left Quillen functors.
\end{proposition}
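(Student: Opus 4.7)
The plan is to define $(\LL F)_J$ as the ordinary left derived functor of the postcomposition functor $F_J\colon\mm^J\to\nn^J$ (with both sides endowed with the projective model structure) and then to produce the coherence isomorphisms $\gamma_u^{\LL F}$ from the strict commutativity $F_J\circ u^\ast = u^\ast\circ F_K$ using a zigzag of cofibrant replacements.

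First I would observe that $F_J$ is itself a left Brown functor for the projective model structures, and in fact something stronger: it preserves weak equivalences between \emph{pointwise} cofibrant diagrams, because weak equivalences in $\mm^J_{\proj}$ are pointwise and $F$ takes pointwise weak equivalences between pointwise cofibrant diagrams to pointwise weak equivalences (by the left Brown property of $F$ itself). Choose a functorial projective cofibrant replacement $q_J\colon Q_J\to\id$ in $\mm^J$ and set $(\LL F)_J := \gamma_{\nn^J}\circ F_J\circ Q_J$; this is the usual construction of the total left derived functor, and it has the universal property of a left Kan extension of $\gamma_{\nn^J}\circ F_J$ along $\gamma_{\mm^J}$.

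Next I would construct $\gamma_u^{\LL F}\colon u^\ast\circ(\LL F)_K\to(\LL F)_J\circ u^\ast$ from the zigzag
\[
u^\ast F_K Q_K(X)\;=\;F_J\,u^\ast Q_K(X)\;\xleftarrow{F_J(q_J)}\;F_J Q_J u^\ast Q_K(X)\;\xrightarrow{F_J Q_J u^\ast(q_K)}\;F_J Q_J u^\ast(X),
\]
where the first equality uses the pointwise nature of $F$ and the maps $q_J,\,q_K$ are the chosen cofibrant replacements. Both arrows become isomorphisms in $\Ho(\nn^J)$: for the leftward one, $Q_J u^\ast Q_K(X)$ is projective cofibrant and $u^\ast Q_K(X)$ is pointwise cofibrant (since $u^\ast$ preserves pointwise properties and $Q_K(X)$ is projective cofibrant, hence pointwise cofibrant), and the key preservation property above applies; for the rightward one, the map $u^\ast(q_K)$ is a weak equivalence because $u^\ast$ preserves all weak equivalences (so $\LL u^\ast=u^\ast=\RR u^\ast$ on homotopy categories, as recalled in the excerpt), hence $Q_J u^\ast(q_K)$ is a weak equivalence between projective cofibrant objects and again $F_J$ preserves it. Inverting the leftward isomorphism gives the desired $\gamma_u^{\LL F}$, and the construction is natural in $X$.

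Finally I would verify the coherence axioms for a pseudo-natural transformation: $\gamma_{\id_J}=\id$, the cocycle condition for a composite $J\xrightarrow{u}K\xrightarrow{v}L$, and compatibility with $2$-cells $\alpha\colon u\to v$. Each reduces, after applying $\gamma_{\nn^J}$, to a diagram in $\nn^J$ that commutes already at the model-category level up to the zigzag of weak equivalences used to define $\gamma$; functoriality of $Q_J$ and of the derived $u^\ast$ then give the required commutativity. The main obstacle, and the place where one has to be genuinely careful, is this coherence check: one must organize the cofibrant replacements so that the two natural ways to produce the zigzag for $vu$ (first replace $Q_K$, then $Q_J$; or first replace $Q_L$, then $Q_J$) yield the same map in $\Ho(\nn^J)$. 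This is handled by the universal property of $\LL F_J$ as a left Kan extension, which guarantees uniqueness of the comparison once one exhibits any natural transformation extending $F_J\circ u^\ast=u^\ast\circ F_K$ to the derived level. The final sentence of the proposition, about left Quillen functors, is immediate since every left Quillen functor is in particular a left Brown functor by Ken Brown's lemma.
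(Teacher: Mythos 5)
Your proposal is correct, and the overall strategy is the same as the paper's: derive $F$ levelwise and extract the coherence isomorphisms $\gamma_u$ from the strict equality $u^\ast\circ F_K=F_J\circ u^\ast$ together with the universal property of derived functors. The one substantive difference is your choice of the projective model structure on the diagram categories. The paper works with the \emph{injective} model structures, for which the cofibrant objects are exactly the pointwise cofibrant diagrams; then $u^\ast$ is itself a left Brown functor preserving all weak equivalences \emph{and} cofibrant objects, so both composites in the square are composites of left Brown functors with compatible (co)fibrant replacements, and $\LL(u^\ast\circ F_K)\cong u^\ast\circ\LL F_K$ and $\LL(F_J\circ u^\ast)\cong\LL F_J\circ u^\ast$ come for free from the composition theorem for derived functors --- no zigzag needed. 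Since $u^\ast$ does not preserve projectively cofibrant objects, your route has to compensate with the sharper observation that $F_J$ is left derivable relative to the larger class of pointwise cofibrant diagrams, and with the three-term zigzag through $F_JQ_Ju^\ast Q_K(X)$; both steps are justified as you state them (projectively cofibrant diagrams are pointwise cofibrant, and $u^\ast$ preserves pointwise cofibrancy and all weak equivalences). In effect your ``stronger Brown property'' for $F_J$ is precisely the statement that $F_J$ is left Brown for the injective structure, so the two proofs are close cousins; the injective choice just packages the bookkeeping more efficiently. Your treatment of the coherence axioms via the uniqueness clause in the Kan-extension characterization of $\LL F_J$ is the right mechanism and is no less detailed than the paper's own ``it is easy to check''.
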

\begin{proof}
Let $J$ be a category and let us consider the induced functor $F\colon\mm^J\nach\nn^J.$ With respect to the injective model structures, this is again a left Brown functor. Hence, given a functor $u\colon J\nach K$ we have the following commutative diagram on the left consisting of left Brown functors only:
$$
\xymatrix{
\mm_{\inj}^K\ar[r]^-F\ar[d]_{u^\ast}&\nn_{\inj}^K\ar[d]^{u^\ast}&&
\DD_{\mm}(K)\ar[d]_{u^\ast}\ar[r]^-{\LL\!F}\xtwocell[1,1]{}\omit&\DD_{\nn}(K)\ar[d]^{u^\ast}\\
\mm_{\inj}^J\ar[r]_-F&\nn_{\inj}^J&&
\DD_{\mm}(J)\ar[r]_-{\LL\!F}&\DD_{\nn}(J)
}
$$
Passing to left derived functors for the horizontal arrows and to the induced functors on the localizations for the vertical arrows gives us the diagram on the right which by our above discussion commutes up to a canonical natural isomorphism $\gamma_u$. It is easy to check that these natural isomorphisms~$\gamma_u,$ $u\colon J\nach K,$ endow the functors $\LL\!F$ with the structure of a morphism of derivators.
\end{proof}

\begin{corollary}
Let $(F,U)\colon\mm\rightharpoonup\nn$ be a Quillen adjunction of combinatorial model categories. Then we obtain a derived adjunction $(\LL\! F,\RR\! U)\colon\DD_{\mm}\rightharpoonup\DD_{\nn}.$ If $(F,U)$ happens to be a Quillen equivalence then $(\LL\! F, \RR\! U)$ is an equivalence of derivators.
\end{corollary}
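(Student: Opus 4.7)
The strategy is to verify the characterization of left-adjoint morphisms of derivators recalled before Proposition~\ref{prop_bimorphism}: a morphism is a left adjoint precisely when it is levelwise a left adjoint and preserves homotopy left Kan extensions. Applying Proposition~\ref{prop_brown} to $F$ and its dual (for right Brown functors) to $U$ produces morphisms of derivators $\LL F\colon\DD_{\mm}\to\DD_{\nn}$ and $\RR U\colon\DD_{\nn}\to\DD_{\mm}$. The Quillen adjunction $(F,U)$ lifts for each small category $J$ to a Quillen adjunction $(F,U)\colon\mm_{\inj}^J\rightleftarrows\nn_{\inj}^J$, since $F$ is still a left adjoint at the diagram level and $U$ preserves the levelwise-defined injective (acyclic) fibrations. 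Passing to homotopy categories produces a classical adjunction $(\LL F_J,\RR U_J)\colon\DD_{\mm}(J)\rightleftarrows\DD_{\nn}(J)$ at each level.

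The key step is to show that $\LL F$ preserves homotopy left Kan extensions. Fix $u\colon J\to K$ and switch to the projective model structures, under which $(u_!,u^*)$ is a Quillen adjunction on both sides and the homotopy left Kan extensions in $\DD_{\mm}$ and $\DD_{\nn}$ are computed as the left derived functors of the ordinary $u_!$. On underlying categories the equality $F\circ u_!=u_!\circ F$ holds by uniqueness of left adjoints, both being left adjoint to $U\circ u^*=u^*\circ U$. Since $F$ and $u_!$ are each left Quillen for the projective structures, the composition theorem for left derived functors yields
$$\LL F\circ u_{!}\;\cong\;\LL(F\circ u_{!})\;=\;\LL(u_{!}\circ F)\;\cong\;u_{!}\circ\LL F,$$
and tracing through the defining units and counits identifies this isomorphism with the Beck-Chevalley 2-cell ${\gamma^{\LL F}_u}_{!}$. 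Hence $\LL F$ preserves homotopy left Kan extensions, and the assembly procedure recalled before Proposition~\ref{prop_bimorphism} produces a right-adjoint morphism of derivators whose underlying functors are the $\RR U_J$ by uniqueness of adjoints; this morphism must coincide with the one given directly by the dual of Proposition~\ref{prop_brown}.

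For the second statement, assume $(F,U)$ is a Quillen equivalence. The key input is that Quillen equivalences are inherited by diagram categories: $(F,U)\colon\mm_{\inj}^J\rightleftarrows\nn_{\inj}^J$ is again a Quillen equivalence for every small $J$. Consequently the derived unit and counit are isomorphisms at each level, so each pair $(\LL F_J,\RR U_J)$ is an equivalence of categories, and a levelwise equivalence of morphisms of derivators is an equivalence in the 2-category $\Der$ (inverse natural transformations automatically assemble into a morphism by the adjunction formalism just used). The main technical point is the Beck-Chevalley identification in the previous paragraph: although $F\circ u_!$ and $u_!\circ F$ agree strictly before deriving, their derived versions only acquire a canonical comparison via the composition formula, and matching this with the Beck-Chevalley map requires one to carefully trace through the coherence structure on $\LL F$ produced in the proof of Proposition~\ref{prop_brown}.
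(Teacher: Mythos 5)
Your argument is correct, but it takes the longer of the two routes that the paper sketches. The paper's own proof is a two-line reduction: by Proposition \ref{prop_brown} and its dual one gets morphisms $\LL\! F$ and $\RR\! U$ that are \emph{levelwise adjoint in a compatible way}, and then \cite[Proposition 2.11]{groth_derivator} (the same result recalled before Proposition \ref{prop_bimorphism}: compatible levelwise adjunctions assemble uniquely into an adjunction of derivators, and equivalences of derivators are detected pointwise) does all the remaining work, for both the adjunction and the equivalence statement. You instead verify the equivalent characterization ``levelwise left adjoint $+$ preserves homotopy left Kan extensions'' by hand, via the identity $F\circ u_!\cong u_!\circ F$ of underlying left adjoints, the composition theorem for left derived functors in the projective structures, and an identification of the resulting isomorphism with the Beck--Chevalley $2$-cell ${\gamma^{\LL F}_u}_!$. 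The paper explicitly flags this as the alternative proof (``Alternatively, one could check that $\LL\! F$ preserves homotopy left Kan extensions''), so your route is legitimate; what it buys is a self-contained argument that does not lean on the black-boxed assembly statement, at the cost of the genuinely fiddly Beck--Chevalley identification, which you assert rather than carry out (the paper avoids it entirely). Two small corrections: first, injective (acyclic) fibrations are \emph{not} levelwise-defined --- the right justification that $(F,U)$ lifts to $\mm^J_{\inj}\rightharpoonup\nn^J_{\inj}$ is that $F$ preserves the levelwise-defined cofibrations and acyclic cofibrations, so $F$ is left Quillen and hence $U$ is right Quillen by adjunction; second, uniqueness of left adjoints to $U\circ u^\ast=u^\ast\circ U$ gives a canonical natural isomorphism $F\circ u_!\cong u_!\circ F$, not an equality, though this changes nothing downstream.
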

\begin{proof}
For the case of adjunctions it suffices to observe that we obtain morphisms of derivators~$\LL\! F$ and~$\RR\! U$ which are levelwise adjoint in a compatible way. Thus, \cite[Proposition 2.11]{groth_derivator} implies that we have an adjunction. Alternatively, one could check that~$\LL\! F$ preserves homotopy left Kan extensions.
The case of equivalences is even easier since these are detected pointwise (see again \cite[Proposition 2.11]{groth_derivator}).
\end{proof}

There is a further important class of Brown functors, namely the Quillen bifunctors. These are central to many notions of homotopical algebra.

\begin{definition}
Let $\mm,\;\nn,$ and $\pp$ be model categories. A functor $\otimes\colon\mm\times\nn\nach\pp$ is a \emph{left Quillen bifunctor} if it preserves colimits separately in each variable and has the following property: For every cofibration $f\colon X_1\nach X_2$ in $\mm$ and every cofibration $g\colon Y_1\nach Y_2$ in $\nn$ the pushout-product map 
$$f\Box g=(X_2\otimes g)\amalg (f\otimes Y_2)\colon\quad X_2\otimes Y_1\amalg_{X_1\otimes Y_1}X_1\otimes Y_2\nach X_2\otimes Y_2$$ 
is a cofibration which is acyclic if in addition $f$ or $g$ is acyclic.
\end{definition}

There is the dual notion of a right Quillen bifunctor $\Hom\colon\mm^{op}\times\nn\nach\pp$. In that case one considers the induced maps 
$$\Hom_\Box(f,g)\colon\Hom(X_2,Y_1)\nach\Hom(X_1,Y_1)\times_{\Hom(X_1,Y_2)}\Hom(X_2,Y_2).$$
The following is immediate.

\begin{lemma}
Let $\otimes\colon\mm\times\nn\nach\pp$ be a left Quillen bifunctor and let $X\in\mm$ resp.\ $Y\in\nn$ be cofibrant objects. The functors $X\otimes -\colon\nn\nach\pp$ and $-\otimes Y\colon\mm\nach\pp$ are then left Quillen functors. In particular, $\otimes\colon\mm\times\nn\nach\pp$ is a left Brown functor when we endow $\mm\times\nn$ with the product model structure.
\end{lemma}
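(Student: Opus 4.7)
The plan is to reduce the one-variable statements to the defining pushout-product axiom by feeding in the canonical cofibration out of an initial object, and then to obtain the Brown-functor consequence by a standard factorization argument combined with Ken Brown's lemma.

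For the first assertion, fix a cofibration $f\colon X_1\nach X_2$ in $\mm$ and form its pushout-product with the map $g\colon\emptyset\nach Y$ in $\nn$, which is a cofibration precisely because $Y$ is cofibrant. Since $\otimes$ preserves colimits separately in each variable, it in particular preserves initial objects in each variable, so $X_i\otimes\emptyset$ is initial in $\pp$ for $i=1,2$. Consequently the pushout defining $f\Box g$ collapses to $X_1\otimes Y$, and $f\Box g$ is nothing but $f\otimes Y\colon X_1\otimes Y\nach X_2\otimes Y$. The left Quillen bifunctor axiom then forces $f\otimes Y$ to be a cofibration, acyclic whenever $f$ is. Combined with the fact that $-\otimes Y$ preserves colimits in its argument (and is hence a left adjoint in the present setting), this exhibits $-\otimes Y$ as a left Quillen functor. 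The argument for $X\otimes-$ is symmetric.

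For the Brown-functor statement, endow $\mm\times\nn$ with the product model structure, in which $(X,Y)$ is cofibrant iff both components are, and $(f,g)$ is a weak equivalence iff both $f$ and $g$ are. Given a weak equivalence $(f,g)\colon(X_1,Y_1)\nach(X_2,Y_2)$ between cofibrant objects, factor its $\otimes$-image as
\[
X_1\otimes Y_1\xrightarrow{\,f\otimes Y_1\,}X_2\otimes Y_1\xrightarrow{\,X_2\otimes g\,}X_2\otimes Y_2.
\]
By the first part, applied to the cofibrant objects $Y_1$ and $X_2$ respectively, both $-\otimes Y_1$ and $X_2\otimes-$ are left Quillen functors and hence preserve weak equivalences between cofibrant objects (Ken Brown's lemma). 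Since $X_1,X_2$ are cofibrant in $\mm$ and $Y_1,Y_2$ in $\nn$, both arrows in the factorization are weak equivalences, and so is their composite $f\otimes g$. This proves that $\otimes$ preserves weak equivalences between cofibrant objects of $\mm\times\nn$, i.e.\ that it is a left Brown functor.

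The only real observation is that the pushout-product with the cofibration $\emptyset\nach Y$ degenerates precisely to $-\otimes Y$; this is what channels two-variable data into one-variable information. Once this is noticed, there is no serious obstacle: both parts reduce to invocations of the given axioms and of Ken Brown's lemma.
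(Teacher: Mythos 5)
Your proof is correct and is exactly the standard argument the paper has in mind when it states the lemma without proof ("The following is immediate"): specialize the pushout-product axiom to the cofibration $\emptyset\nach Y$ to see that $-\otimes Y$ preserves (acyclic) cofibrations, then factor $f\otimes g$ through $X_2\otimes Y_1$ and apply Ken Brown's lemma to each one-variable functor. No gaps; the only cosmetic point is that "left adjoint" for $-\otimes Y$ strictly needs the presentability hypothesis in force elsewhere in the paper, but nothing in the Brown-functor conclusion depends on it.
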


Thus Proposition \ref{prop_brown} can be applied to Quillen bifunctors. Under the canonical isomorphism $\DD_{\mm\times\nn}\cong\DD_{\mm}\times\DD_{\nn}$ we obtain that a Quillen bifunctor $\otimes\colon\mm\times\nn\nach\pp$ induces a morphism of derivators $\DD_{\mm}\times\DD_{\nn}\nach\DD_{\pp}.$ Let us not distinguish notationally between this morphism and the associated bimorphism (cf.\ Proposition \ref{prop_bimorphism}) and let us denote both by
$$
\stackrel{\LL}{\otimes}\colon\DD_{\mm}\times\DD_{\nn}\nach\DD_{\pp}\qquad\mbox{and}\qquad \stackrel{\LL}{\otimes}\colon(\DD_{\mm},\DD_{\nn})\nach\DD_{\pp}.
$$
The bimorphism can also be obtained without invoking Proposition \ref {prop_bimorphism}. The bifunctor $\otimes$ induces a strict bimorphism of represented derivators $\otimes\colon(\mm,\nn)\nach\pp.$ For each morphism of pairs $(u_1,u_2)\colon(J_1,J_2)\nach(K_1,K_2)$ we have a commutative diagram of left Brown functors as follows if all model categories are endowed with the injective model structures:
$$
\xymatrix{
\mm^{K_1}\times\nn^{K_2}\ar[r]^-\otimes\ar[d]_{u_1^\ast\times u_2^\ast}&\pp^{K_1\times K_2}\ar[d]^{(u_1\times u_2)^\ast}\\
\mm^{J_1}\times\nn^{J_2}\ar[r]_-\otimes&\pp^{J_1\times J_2}
}
$$
Forming derived functors at the different levels and taking the natural isomorphisms induced by these diagrams we obtain again the bimorphism $(\DD_{\mm},\DD_{\nn})\nach\DD_{\pp}.$

In the context of combinatorial model categories, we get a stronger statement. Recall that the adjoint functor theorem of Freyd takes the following form in the context of presentable categories: a functor between presentable categories is a left adjoint if and only if it preserves colimits. For example, in the context of combinatorial model categories a monoidal structure which preserves colimits in each variable is always a biclosed monoidal structure, i.e., we have an adjunction of two variables $(\otimes,\Hom_l,\Hom_r).$

Now, let $\mm,\;\nn,$ and $\pp$ be combinatorial model categories. Then given a left Quillen bifunctor $\otimes\colon\mm\times\nn\nach\pp$ we obtain an adjunction of two variables $(\otimes, \Hom_l,\Hom_r).$ This adjunction is expressed by natural isomorphisms 
$$\hhom_{\pp}(X\otimes Y,Z)\quad\cong \quad\hhom_{\mm}(X,\Hom_r(Y,Z)) \quad\cong\quad \hhom_{\nn}(Y,\Hom_l(X,Z))$$
for certain functors
$$\Hom_l(-,-)\colon\mm^{\op}\times\pp\nach\nn\qquad\mbox{and}
\qquad\Hom_r(-,-)\colon\nn^{\op}\times\pp\nach\mm.$$

\begin{lemma}
Let $\mm,\;\nn,$ and $\pp$ be model categories and let $(\otimes,\Hom_l,\Hom_r) \colon\mm\times\nn\rightharpoonup\pp$ be an adjunction of two variables. If we endow $\mm^{\op}$ resp.\ $\nn^{\op}$ with the dual model structures we have the following equivalent statements: $\otimes$ is a left Quillen bifunctor if and only if $\Hom_l$ is a right Quillen bifunctor if and only if $\Hom_r$ is a right Quillen bifunctor.
\end{lemma}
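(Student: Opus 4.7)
The plan is to reduce all three statements to the same family of lifting problems, using the classical ``Joyal--Tierney calculus'' for two-variable adjunctions. Given morphisms $f\colon X_1\to X_2$ in $\mm$, $g\colon Y_1\to Y_2$ in $\nn$ and $h\colon Z_1\to Z_2$ in $\pp$, the two-variable adjunction provides, in addition to the pushout-product $f\Box g$, two pullback-power maps
\[
\Hom_{l,\Box}(f,h)\colon \Hom_l(X_2,Z_1)\to \Hom_l(X_1,Z_1)\times_{\Hom_l(X_1,Z_2)} \Hom_l(X_2,Z_2)
\]
and analogously $\Hom_{r,\Box}(g,h)$. The first step is to establish the fundamental adjointness: the adjunction isomorphisms $\hhom_\pp(X\otimes Y,Z)\cong \hhom_\nn(Y,\Hom_l(X,Z))\cong \hhom_\mm(X,\Hom_r(Y,Z))$, combined with the universal properties of the pushout defining the source of $f\Box g$ and the pullbacks defining the targets of the $\Hom_{?,\Box}$-maps, yield a natural bijection between commutative squares of each of the three shapes $(f\Box g,h)$, $(g,\Hom_{l,\Box}(f,h))$, and $(f,\Hom_{r,\Box}(g,h))$, and in fact between their sets of diagonal fillers. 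In particular, $f\Box g$ has the left lifting property against $h$ if and only if $g$ does against $\Hom_{l,\Box}(f,h)$ if and only if $f$ does against $\Hom_{r,\Box}(g,h)$.

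The second step is to translate the three Quillen-bifunctor conditions into lifting statements. In any model category, (acyclic) cofibrations are characterized as the maps with LLP against all (acyclic) fibrations, and dually for (acyclic) fibrations. Thus $\otimes$ being a left Quillen bifunctor means that for cofibrations $f\in\mm$ and $g\in\nn$ and for (acyclic) fibrations $h\in\pp$, with the standard acyclicity proviso on $f$ or $g$, the map $f\Box g$ has LLP against $h$. Saying $\Hom_l$ is a right Quillen bifunctor means $\Hom_{l,\Box}(f,h)$ is an (acyclic) fibration in $\nn$ under the dual hypotheses on $f$ and $h$; by the characterization of (acyclic) fibrations this is in turn equivalent to $\Hom_{l,\Box}(f,h)$ having RLP against every (acyclic) cofibration $g\in\nn$. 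The formulation for $\Hom_r$ is entirely analogous.

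Combining the two steps, all three conditions become the same family of lifting problems, indexed by triples $(f,g,h)$ of a cofibration in $\mm$, a cofibration in $\nn$, and a fibration in $\pp$, with one of the three required to be acyclic. The bijections of Step~1 identify these families, so the three conditions hold simultaneously. The principal subtlety lies in Step~1: one must carefully verify that the three-way correspondence between lifting problems is natural and respects fillers, which amounts to chasing the adjunction isomorphisms through the pushout defining the source of $f\Box g$ and the pullbacks defining the targets of the $\Hom_{?,\Box}$ constructions. Once this (standard but bookkeeping-heavy) verification is carried out, the remainder is a direct application of the lifting-property characterizations of (co)fibrations, and no use of the dual model structure on $\mm^{\op}$ or $\nn^{\op}$ is needed beyond reinterpreting ``right Quillen bifunctor in the first variable'' as ``left Quillen bifunctor out of the opposite''.
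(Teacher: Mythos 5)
Your argument is correct; this is the standard ``adjunction of lifting problems'' proof (cf.\ Hovey, \emph{Model Categories}, Lemma 4.2.2), and the bookkeeping works out: each of the three Quillen-bifunctor conditions unwinds, via the lifting-property characterizations of (acyclic) cofibrations and fibrations, to solvability of the same family of lifting problems indexed by triples $(f,g,h)$ of a cofibration in $\mm$, a cofibration in $\nn$, and a fibration in $\pp$ with at least one of the three acyclic. The paper itself states this lemma without proof, treating it as well known, so your write-up simply supplies the intended standard argument.
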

\noindent

By the above discussion, we know that a left Quillen bifunctor $\otimes\colon\mm\times\nn\nach\pp$ between combinatorial model categories extends to an adjunction of two variables. By Proposition \ref{prop_representedadjunction} or again by the special adjoint functor theorem, we deduce that this adjunction induces an adjunction of two variables between represented derivators $\otimes\colon(\mm,\nn)\nach\pp.$ By the last lemma, we have thus adjunctions of two variables consisting of Quillen bifunctors which induce derived adjunctions of two variables $\DD_{\mm}(J_1)\times\DD_{\nn}(J_2)\nach\DD_{\pp}(J_1\times J_2).$ Now, using Lemma \ref{lemma_ad2varDer}, we could proceed in two possible ways. Either we check that the bimorphism $\stackrel{\LL}{\otimes}\colon(\DD_{\mm},\DD_{\nn})\nach\DD_{\pp}$ preserves homotopy colimits separately in each variable or we show that certain structure maps belonging to $\RR\!\Hom_l$ and $\RR\!\Hom_r$ are isomorphisms. In both cases, our conclusion is that the bimorphism 
$$\stackrel{\LL}{\otimes}\colon(\DD_{\mm},\DD_{\nn})\nach\DD_{\pp}$$ 
is a left adjoint of two variables. We have thus established the following result.

\begin{corollary}\label{cor_derivedadjunction}
Let $\mm,\;\nn,$ and $\pp$ be combinatorial model categories and let $\otimes\colon\mm\times\nn\nach\pp$ be a left Quillen bifunctor. Then, by forming derived functors, we obtain an adjunction of two variables at the level of associated derivators:
$$(\stackrel{\LL}{\otimes},\RR\!\Hom_l,\RR\!\Hom_r)\colon(\DD_{\mm},\DD_{\nn})\rightharpoonup\DD_{\pp}$$
\end{corollary}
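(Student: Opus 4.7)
The plan is to combine three ingredients already assembled in the paragraph preceding the statement. First, since $\mm$ and $\nn$ are combinatorial and hence presentable, the special adjoint functor theorem applied to the colimit-preserving bifunctor $\otimes$ yields an adjunction of two variables $(\otimes,\Hom_l,\Hom_r)\colon\mm\times\nn\rightharpoonup\pp$ at the level of ordinary categories. The lemma immediately preceding the corollary then identifies $\Hom_l$ and $\Hom_r$ as right Quillen bifunctors with respect to the dual model structures on $\mm^{\op}$ and $\nn^{\op}$.

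Next, I would apply Proposition~\ref{prop_brown} (and its dual for right Brown functors) to the induced functors on each pair of diagram categories, using the injective (resp.\ projective) model structures. This produces a bimorphism $\stackrel{\LL}{\otimes}\colon(\DD_{\mm},\DD_{\nn})\nach\DD_{\pp}$ together with lax transformations $\RR\!\Hom_l$ and $\RR\!\Hom_r$ valued in the appropriate opposite derivators. The hom-set adjunction isomorphisms at the model category level restrict to natural bijections on the full subcategories of cofibrant-fibrant objects; these descend via cofibrant/fibrant replacement to natural isomorphisms between the corresponding hom-sets in the homotopy categories. This shows that the derived bimorphism $\stackrel{\LL}{\otimes}$ is levelwise divisible on both sides, with derived adjoints $\RR\!\Hom_l$ and $\RR\!\Hom_r$; by Lemma~\ref{lemma_ad2var} the latter acquire canonical lax transformation structures matching the ones above.

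It then remains to verify that $\stackrel{\LL}{\otimes}$ preserves homotopy left Kan extensions separately in each variable. The cleanest approach is direct: for a projectively cofibrant diagram $X\in\mm^{J_1}$, the induced functor $X\otimes -\colon\nn^{J_2}\nach\pp^{J_1\times J_2}$ is a left Quillen functor, since the pushout-product axiom guarantees that $X\otimes -$ sends generating (acyclic) cofibrations of the projective model structure on $\nn^{J_2}$ to (acyclic) cofibrations. Its left derived functor therefore preserves homotopy colimits, and by \cite[Proposition~2.4]{groth_derivator} also homotopy left Kan extensions; this is precisely cocontinuity of $\stackrel{\LL}{\otimes}$ in the second variable. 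A symmetric argument handles the first variable. Alternatively, one could invoke Lemma~\ref{lemma_ad2varDer} and verify that the structure maps $\gamma^{\RR\!\Hom_l}_{\id,u_2}$ and $\gamma^{\RR\!\Hom_r}_{u_1,\id}$ are invertible.

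The main obstacle I anticipate is the coherence bookkeeping: organizing the levelwise derived functors, their levelwise adjunction isomorphisms, and the Beck--Chevalley maps they induce into bona fide (lax) pseudo-natural transformations satisfying the coherence axioms of a bimorphism and of Lemma~\ref{lemma_ad2var}. This is essentially formal once one appeals to the 2-universal property of the localizations $\mm^{J}\nach\Ho(\mm^{J})$ together with the good behavior of Beck--Chevalley transformations under pasting (cf.\ \cite[Lemma~1.18]{groth_derivator}), but writing it out carefully is the most delicate part of the argument.
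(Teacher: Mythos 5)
Your proposal is correct and follows essentially the same route as the paper: extend $\otimes$ to an adjunction of two variables via presentability, use the Quillen-bifunctor duality lemma and the Brown-functor machinery to get levelwise derived adjunctions assembling into a bimorphism with lax adjoints as in Lemma~\ref{lemma_ad2var}, and then appeal to Lemma~\ref{lemma_ad2varDer}. The only substantive difference is that you actually carry out the cocontinuity verification (via the left-Quillen-ness of $X\otimes-$ for cofibrant $X$ and \cite[Proposition~2.4]{groth_derivator}), a step the paper merely indicates as one of two possible options and leaves to the reader.
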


For later reference let us quickly introduce the notion of Quillen homotopies.

\begin{definition}
Let $F,\:G\colon\mm\nach\nn$ be left Brown functors. A natural transformation $\tau\colon F\nach G$ is called a \emph{left Quillen homotopy} if the components $\tau_X$ are weak equivalences for all cofibrant objects~$X$.
\end{definition}

\begin{lemma}
Let $F,\:G\colon\mm\nach\nn$ be left Brown functors between combinatorial model categories and let $\tau\colon F\nach G$ be a left Quillen homotopy. Then we obtain a natural isomorphism
$$\LL\!\tau\colon\LL\! F\stackrel{\cong}{\nach} \LL\! G$$
between the induced morphisms of derivators $\LL\! F,\:\LL\! G\colon\DD_{\mm}\nach\DD_{\nn}.$
\end{lemma}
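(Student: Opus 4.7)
The plan is to construct $\LL\tau$ levelwise and then verify compatibility with the coherence data of $\LL F$ and $\LL G$. Fix a small category $J$ and consider the induced functors $F,G\colon\mm_{\inj}^J\nach\nn_{\inj}^J$ together with the induced natural transformation $\tau\colon F\nach G$. The central observation is that every injectively cofibrant object $Y\in\mm^J$ is pointwise cofibrant: cofibrations in the injective model structure are levelwise cofibrations, so the map from the initial object to $Y$ being an injective cofibration forces each $Y(j)$ to be cofibrant in $\mm$. Consequently, for such $Y$, each component of $\tau_Y\colon FY\nach GY$ is a weak equivalence in $\nn$, so $\tau_Y$ is itself a weak equivalence in $\nn_{\inj}^J$.

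Choosing a functorial injective cofibrant replacement $Q\colon\mm^J\nach\mm^J$, the derived functors of Proposition \ref{prop_brown} can be modeled as $\LL F_J(X)=\gamma(F(QX))$ and $\LL G_J(X)=\gamma(G(QX))$. Define $(\LL\tau)_J(X):=\gamma(\tau_{QX})$. By the first step, $\tau_{QX}$ is a weak equivalence, hence $(\LL\tau)_J(X)$ is an isomorphism in $\DD_{\nn}(J)$, and naturality in $X$ is inherited from the naturality of $\tau$ evaluated at $QX$. This yields a levelwise natural isomorphism $(\LL\tau)_J\colon \LL F_J\stackrel{\cong}{\nach}\LL G_J$.

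It remains to check that these isomorphisms assemble into a natural transformation of morphisms of derivators, i.e., that they are compatible with the coherence isomorphisms $\gamma^{\LL F}_u$ and $\gamma^{\LL G}_u$ produced in the proof of Proposition \ref{prop_brown}. Before deriving, the relevant compatibility square commutes strictly: for $u\colon J\nach K$ one has on-the-nose equalities $u^\ast F=Fu^\ast$ and $u^\ast G=Gu^\ast$ of functors $\mm^K\nach\nn^J$, and ordinary naturality of $\tau$ gives the strict identity $u^\ast\tau=\tau u^\ast$. The isomorphisms $\gamma_u^{\LL F}$ and $\gamma_u^{\LL G}$ were themselves obtained by descending precisely these strict equalities along $\gamma$ using the 2-universal property of the localization. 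Applying the same 2-universal property to the composite, or equivalently to the (strict) square above, forces the diagram relating $u^\ast\circ\LL\tau$ and $\LL\tau\circ u^\ast$ through $\gamma_u^{\LL F}$ and $\gamma_u^{\LL G}$ to commute, which is the required compatibility.

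The main subtlety is keeping track of the third step; once one recognizes that the coherence isomorphisms of $\LL F$ and $\LL G$ are canonically induced by a strictly commutative precomposition square and that $\tau$ is strictly natural with respect to this square, compatibility of $\LL\tau$ with $\gamma_u^{\LL F}$ and $\gamma_u^{\LL G}$ is automatic from the 2-universality of $\gamma$. Invertibility of $\LL\tau$ is then immediate from the fact that each $(\LL\tau)_J$ is already a pointwise isomorphism in $\DD_{\nn}(J)$.
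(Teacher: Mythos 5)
The paper states this lemma without proof, so there is no argument of record to compare against; judged on its own terms, your proof is essentially correct and is the natural continuation of the method used for Proposition \ref{prop_brown}. The decisive observation --- that cofibrant objects of $\mm^J_{\inj}$ are pointwise cofibrant, so that $\tau_{QX}$ is a pointwise and hence injective weak equivalence --- is exactly the right one, and the levelwise definition $(\LL\tau)_J=\gamma(\tau_{QX})$ together with naturality of $\tau$ gives the levelwise natural isomorphism correctly.

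The one place where your justification is looser than it should be is the compatibility with the coherence isomorphisms $\gamma^{\LL F}_u$. You attribute both the construction of $\gamma^{\LL F}_u$ and the commutativity of the modification square to ``the 2-universal property of the localization,'' but $\LL F$ is \emph{not} obtained by descending $F$ along $\gamma$ (since $F$ does not invert all weak equivalences); it is a derived functor, i.e.\ an absolute Kan extension along $\gamma$, and $\gamma^{\LL F}_u$ is the canonical comparison $u^\ast\circ\LL F\cong\LL(u^\ast\circ F)=\LL(F\circ u^\ast)\cong\LL F\circ u^\ast$. So the 2-universality of $\gamma$ alone does not force the square to commute; what does is either the uniqueness clause in the universal property of the Kan extension (i.e.\ the pseudo-functoriality of $\LL$ with respect to composition and natural transformations of left Brown functors), or an explicit check in the model you already set up: $\gamma^{\LL F}_u$ at $X$ is $F$ applied to the zigzag of weak equivalences between the cofibrant objects $u^\ast Q_K X$ and $Q_J u^\ast X$ through $Q_J u^\ast Q_K X$, and the required square then commutes on the nose in $\nn^J$ before localizing, by ordinary naturality of $\tau$. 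This is a repair of phrasing rather than of substance --- the statement you need is true and follows in two lines from the data you have already introduced --- but as written the appeal to the wrong universal property would not survive scrutiny.
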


With these preparations we can now turn to monoidal model categories. We use the following definition of a monoidal model category, which is close to the original one in \cite{hovey}.

\begin{definition}
A \emph{monoidal model category} is a model category $\mm$ endowed with a monoidal structure such that the monoidal pairing $\otimes\colon\mm\times\mm\nach\mm$ is a Quillen bifunctor and such that a (and hence any) cofibrant replacement $Q\mathbb{S}\nach \mathbb{S}$ of the monoidal unit has the property that the induced natural transformations $Q\SSS\otimes -\nach\SSS\otimes -$ and $-\otimes Q\SSS\nach -\otimes \SSS$ are Quillen homotopies.
\end{definition}

\begin{theorem}
Let $\mm$ be a combinatorial monoidal model category. The associated derivator $\DD_{\mm}$ inherits canonically the structure of a biclosed monoidal derivator. If the monoidal structure on $\mm$ is symmetric, then this is also the case for the induced structure on $\DD_{\mm}.$
\end{theorem}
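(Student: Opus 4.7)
The plan is to transport the monoidal structure of~$\mm$ to~$\DD_{\mm}$ by applying the derived functor machinery built up in the previous subsection. By Corollary \ref{cor_derivedadjunction} applied to the Quillen bifunctor $\otimes\colon\mm\times\mm\to\mm$, we obtain a bimorphism $\stackrel{\LL}{\otimes}\colon(\DD_{\mm},\DD_{\mm})\to\DD_{\mm}$ which is a left adjoint of two variables. In particular it preserves homotopy left Kan extensions separately in each variable (the condition of Definition \ref{def_monder}), and it comes equipped with right adjoints $\RR\!\Hom_l,\RR\!\Hom_r$, which will establish biclosure once the monoidal structure is in place. The monoidal unit $\SSS\in\mm$ determines an object of $\DD_{\mm}(e)=\Ho(\mm)$ which corresponds to a morphism of prederivators $\SSS\colon e\to\DD_{\mm}$.

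It then remains to transport the coherence data. For the associator, note that the pushout-product axiom (applied with initial-object cofibrations) implies that the tensor of cofibrant objects is cofibrant, so the two iterated tensors $\otimes\circ(\otimes\times\id)$ and $\otimes\circ(\id\times\otimes)$, regarded as functors $\mm\times\mm\times\mm\to\mm$, are left Brown in each variable, and both compute the corresponding triple derived tensor products on representatives that are already cofibrant. The classical associator~$a$ of~$\mm$ is a natural isomorphism between these trifunctors and therefore descends, by the universal property of derived functors together with the extension of Proposition \ref{prop_brown} to the multi-variable setting, to a natural isomorphism of the induced morphisms of derivators. The same mechanism produces the symmetry constraint in the symmetric case, and the pentagon, triangle, and hexagon diagrams commute in $\DD_{\mm}$ because they commute in~$\mm$ on cofibrant input, where derived functors are detected.

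The main obstacle lies in the unit constraint, for which the Quillen homotopy condition built into the definition of a monoidal model category is essential. The derived tensor $\stackrel{\LL}{\otimes}(\SSS,X)$ is modeled by $Q\SSS\otimes QX$ rather than $\SSS\otimes QX$, so the strict left unitor of~$\mm$ does not provide the derivator-level unitor directly. The hypothesis that $Q\SSS\otimes-\to\SSS\otimes-$ is a left Quillen homotopy, combined with the lemma that Quillen homotopies induce isomorphisms of derived morphisms, produces a natural isomorphism $\LL(Q\SSS\otimes-)\stackrel{\cong}{\to}\LL(\SSS\otimes-)$; composing with the classical left unitor (which identifies $\LL(\SSS\otimes-)$ with the identity on $\DD_{\mm}$) yields the desired derived left unitor $\stackrel{\LL}{\otimes}(\SSS,-)\cong\id$. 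The right unitor is analogous, and the triangle axiom on cofibrant objects descends as before, completing the construction of the biclosed (symmetric) monoidal structure on $\DD_{\mm}$.
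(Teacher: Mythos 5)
Your proposal is correct and follows essentially the same route as the paper: the biclosed structure and the homotopy-colimit compatibility come from Corollary \ref{cor_derivedadjunction} applied to the Quillen bifunctor $\otimes$, and the unit constraint is handled via the Quillen-homotopy condition in the definition of a monoidal model category. The paper is terser, simply citing Hovey's theorem to get the monoidal structure on each $\Ho(\mm^J)$ after observing that the unit condition passes levelwise to the injective model structures on $\mm^J$, whereas you reconstruct the descent of the coherence data by hand; the content is the same.
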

\begin{proof}
We only have to put the above results together and care about the unit. The injective model structures on the diagram categories $\mm^J$ have the property that the natural transformations $Q\SSS\otimes -\nach \SSS\otimes -$ and $-\otimes Q\SSS\nach -\otimes \SSS$ are again Quillen homotopies since everything is defined levelwise. Thus, at each stage we can apply the corresponding result of \cite{hovey} to obtain a monoidal structure on $\Ho(\mm^J).$ Moreover, by Corollary \ref{cor_derivedadjunction} these fit together to define a biclosed monoidal structure on $\DD_{\mm}$ since the left Quillen bifunctor $\otimes$ induces a derived adjunction of two variables at the level of derivators. 
\end{proof}

There is a similar result for monoidal left Quillen functors. Recall from \cite{hovey} that a monoidal left Quillen functor is a left Quillen functor which is strong monoidal and satisfies an additional unitality condition. This extra condition ensures that the derived functor will respect the monoidal unit at the level of homotopy categories. We omit the proof that such a monoidal left Quillen functor between combinatorial model categories induces a monoidal morphism of associated derivators. However, after having given the following central examples we will shortly consider the situation of \emph{weakly} monoidal Quillen adjunctions. The first two examples will be taken up again in that context.

\begin{example}
Let~$k$ be a commutative ring and let~$\Ch(k)$ be the category of unbounded chain complexes over~$k$. This category can be equipped with the combinatorial (so-called projective) model structure where the weak equivalences are the quasi-isomorphisms and the fibrations are the surjections (\cite{hovey}). The tensor product of chain complexes endows this category with the structure of a closed monoidal model category. The unit object is given by~$k[0]$ which denotes the chain complex concentrated in degree zero where it takes the value~$k$. Thus, the associated stable \emph{derivator of chain complexes} 
$$\DD_k:=\DD_{\Ch(k)}$$ 
is a closed monoidal derivator.
More generally, let~$C$ be a commutative monoid in~$\Ch(k),$ i.e., let~$C$ be a commutative differential-graded algebra. Then, the category $C-\Mod$ of differential-graded left~$C$-modules inherits a stable, combinatorial model structure (\cite{SchwedeShipley_Alg}). Moreover, forming the tensor product over~$C$ endows $C-\Mod$ with the structure of a closed monoidal model category. We deduce that the associated stable \emph{derivator of differential-graded} $C$-\emph{modules}
$$\DD_C:=\DD_{C-\Mod}$$
is also closed monoidal.
\end{example}

\begin{example}
Let $\sSet$ denote the category of simplicial sets. As a special case of a presheaf category it is a Grothendieck topos and hence, in particular, a presentable category. If we endow it with the homotopy-theoretic Kan model structure (\cite{quillen}, \cite[Chapter 1]{goerss-jardine}) we obtain a Cartesian closed monoidal model category $\sSetKan$. Since this is a combinatorial model category, we obtain a closed monoidal \emph{derivator of simplicial sets}: 
$$\DD_{\sSet}:=\DD_{\sSetKan}$$
But, there is also the Joyal model structure on the category of simplicial sets (see for example \cite{joyal1}, \cite{HTT}, and also \cite{groth_infinity}). This cofibrantly generated model structure~$\sSetJoyal$ is again Cartesian so that we obtain a further closed monoidal derivator, the \emph{derivator of} $\infty-$\emph{categories}:
$$\DD_{\infty-\Cat}:=\DD_{\sSetJoyal}$$
\end{example}

\begin{example}
Let $\Sp^\mathsf{\Sigma}$ be the category of symmetric spectra based on simplicial sets as introduced in \cite{HSS}. This presentable category carries a symmetric monoidal structure given by the smash product $\wedge$ with the sphere spectrum~$\mathbb{S}$ as monoidal unit. It is shown in \cite{HSS} that~$\Sp^\mathsf{\Sigma}$ endowed with the stable model structure is a cofibrantly generated, stable, symmetric monoidal model category in which the unit object is cofibrant. We hence obtain an associated stable, closed monoidal \emph{derivator of spectra}:
$$\DD_{\Sp}:=\DD_{\Sp^\mathsf{\Sigma}}$$
Moreover, let us denote by $E-\Mod$ the category of left $E$-module spectra for a commutative symmetric ring spectrum~$E$. The category $E-\Mod$ can be endowed with the projective model structure by which we mean that the weak equivalences and the fibrations are reflected by the forgetful functor $E-\Mod\nach\Sp^\mathsf{\Sigma}.$ 
This model category is a combinatorial monoidal model category when endowed with the smash product over $E$ and hence gives rise to the stable, closed monoidal derivator of $E$-module spectra:
$$\DD_E:=\DD_{E-\Mod}$$
\end{example}

We will now consider \emph{weakly} monoidal Quillen adjunctions as introduced by Schwede and Shipley in \cite{schwedeshipley_equivalences} and illustrate them by an example. This example will also reveal a technical advantage derivators do have when compared to model categories (cf.\ Corollary \ref{cor_straightmonoidal}). Before we get to that let us give the following result (cf.\ \cite{kelly_doctrinal}). Let us consider an adjunction $(L,R)\colon\mc\rightharpoonup\md$ where both categories~$\mc$ and~$\md$ are monoidal. Moreover, let us assume that we are given a lax monoidal structure on the right adjoint:
$$m\colon RX\otimes RY\nach R(X\otimes Y)\qquad\mbox{and}\qquad u\colon\SSS\nach R\SSS$$
We can now form certain Beck-Chevalley transformed natural transformations associated to~$m$ and~$u.$ In fact, let us define~$m'\colon L(X\otimes Y)\to LX\otimes LY$ and~$u'\colon L\SSS\to\SSS$ by the following pastings respectively:
$$
\xymatrix{
\md\xtwocell[1,1]{}\omit&\mc\xtwocell[1,1]{}\omit\ar[l]_-L & \mc\times\mc\xtwocell[1,1]{}\omit \ar[l]_-\otimes& &
\md\xtwocell[1,1]{}\omit&\mc\xtwocell[1,1]{}\omit\ar[l]_-L & e\xtwocell[1,1]{}\omit \ar[l]_-\SSS& &\\
&\md \ar@/^1.0pc/[lu]^-=\ar[u]^-R & \md\times\md\ar[l]^-\otimes\ar[u]_-{R\times R}& \mc\times\mc\ar@/_1.0pc/[lu]_-= \ar[l]^-{L\times L}&
 &\md \ar@/^1.0pc/[lu]^-=\ar[u]^-R & e\ar[l]^-\SSS\ar[u]_-=& e\ar@/_1.0pc/[lu]_-= \ar[l]^-=&
}
$$
In these pasting diagrams, the additional undecorated natural transformations are again given by the adjunction morphisms. It is now a lengthy formal calculation to show that the pair~$(m',u')$ defines a lax comonoidal structure on~$L$. Similarly, if we start with a lax comonoidal structure on~$L$ given by
$$m'\colon L(X\otimes Y)\nach L(X\otimes Y)\qquad\mbox{and}\qquad u'\colon L\SSS\nach\SSS,$$
we can again form Beck-Chevalley transformed natural transformations in order to obtain a lax monoidal structure on the right adjoint~$R$. 

\begin{lemma}\label{lem_monoidaladjunctions}
Let $\mc$ and $\md$ be monoidal categories and let $(L,R)\colon\mc\rightharpoonup\md$ be an adjunction. The above constructions define a bijection between lax monoidal structures on~$R$ and lax comonoidal structures on~$L$. Moreover, if~$(L,R)$ is an equivalence then we have a bijection between strong monoidal structures on~$L$ and strong monoidal structures on~$R$.
\end{lemma}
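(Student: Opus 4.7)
The plan is to recognise this as the classical \emph{doctrinal adjunction / mates correspondence} of Kelly, and to prove it purely by pasting calculus. Writing $\Phi\colon(m,u)\auf(m',u')$ for the construction in the statement and $\Psi\colon(m',u')\auf(m,u)$ for the analogous construction starting from a lax comonoidal structure on $L$ (pasting $m'$ and $u'$ with the unit and counit of the adjunction going the other way round), the first task is to verify that $\Phi$ and $\Psi$ are mutually inverse bijections on the underlying data. This is a direct consequence of the triangular identities: the composite $\Psi\Phi$ produces a pasting in which a counit-unit pair appears in the shape that the triangular identity collapses to an identity, and similarly for $\Phi\Psi$. At this point one is doing nothing more than what was already carried out in the proof of Proposition~\ref{prop_bimorphism}, where exactly the same kind of triangular-identity collapse was used to show $l\circ r=\id$.

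Next I would show that the mates correspondence exchanges the axioms for a lax monoidal structure on $R$ with those for a lax comonoidal structure on $L$. The coherence axioms (pentagon and the two unit triangles) are equalities between pasting diagrams involving $m,u$ on one side, and associators/unitors from $\mc$ and $\md$ on the other. By the compatibility of Beck--Chevalley transformations with pasting (cf.\ \cite[Lemma 1.18]{groth_derivator}) together with the triangular identities, pasting the defining diagrams for $m'$ and $u'$ into a candidate pentagon for the comonoidal structure on $L$ turns it into the pentagon for $(m,u)$ sandwiched between an appropriate unit and counit, and conversely. Hence the axioms on one side hold if and only if they do on the other, giving the first claimed bijection.

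For the second claim, suppose $(L,R)$ is an equivalence, so both the unit $\eta$ and the counit $\epsilon$ are natural isomorphisms. Then the pasting construction $\Phi$ takes an invertible $(m,u)$ to an invertible $(m',u')$, because it is a pasting of isomorphisms; the same argument applied to $\Psi$ shows the converse. Thus $\Phi$ restricts to a bijection between \emph{strong} (i.e.\ invertible) lax monoidal structures on $R$ and \emph{strong} lax comonoidal structures on $L$. Finally, a strong lax comonoidal structure on $L$ is interchangeable with a strong monoidal structure on $L$ by inverting the two structure isomorphisms, and one checks that the pentagon and unit axioms for one version are equivalent to those for the other. Composing these bijections gives the desired correspondence between strong monoidal structures on $L$ and on $R$.

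The main obstacle is the second paragraph: verifying that the pentagon and unit equalities transport correctly across the mates correspondence. This is not conceptually difficult, but it requires a careful reduction of a relatively large pasting diagram in which the associators of $\mc$ and $\md$ both appear. Everything else (the bijection at the level of data, and the equivalence case) is a purely formal consequence of the triangular identities and the fact that pasting respects invertibility.
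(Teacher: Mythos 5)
Your proof is correct and follows essentially the same route as the paper's: the paper simply cites a lemma from the companion paper for the fact that taking mates twice is the identity (your triangular-identity argument) and handles the equivalence case exactly as you do, by observing that the pasting defining $m'$ is an isomorphism if and only if $m$ is. The only substantive difference is that you explicitly flag and sketch the transport of the coherence (pentagon and unit) axioms across the mates correspondence, a point the paper passes over in silence.
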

\begin{proof}
We have to show that the two constructions are inverse to each other. But this is a special instance of \cite[Lemma 1.19]{groth_derivator}. The second statement for the case of an equivalence of monoidal categories follows immediately from the description of the construction. In fact, in this case the adjunction unit and counit are natural isomorphisms and hence --for example-- the pasting defining~$m'$ is an isomorphism if and only if~$m$ is an isomorphism.
\end{proof}

Let us now recall the following definition of \cite{schwedeshipley_equivalences}.

\begin{definition}
Let $\mm$ and $\nn$ be monoidal model categories. A \emph {weak monoidal Quillen adjunction} $\mm\rightharpoonup\nn$ is a Quillen adjunction $(F,U)$ together with a lax monoidal structure $(m,u)$ on the right adjoint $U$ such that the following two properties are satisfied:\\
{\rm i)} The natural transformation $m'\colon F\circ\otimes\nach\otimes\circ(F\times F)$ which is part of the induced lax comonoidal structure on $F$ is a left Quillen homotopy.\\
{\rm ii)} For any cofibrant replacement $Q\SSS\nach\SSS$ of the monoidal unit $\SSS$ of $\mm$ the map $FQ\SSS\nach F\SSS\stackrel{u'}{\nach}\SSS$ is a weak equivalence.\\
We call such a datum a \emph{weak monoidal Quillen equivalence} if the underlying Quillen adjunction is a Quillen equivalence.
\end{definition}

In the context of combinatorial monoidal model categories one checks that weak monoidal Quillen adjunctions (resp.\ equivalences) can be extended to weak monoidal Quillen adjunctions (resp.\ equivalences) at the level of diagram categories with respect to the injective model structures.

\begin{proposition}
Let $(F,U)\colon\mm\rightharpoonup\nn$ be a weak monoidal Quillen adjunction between combinatorial model categories. Then the left derived morphism $\LL\! F\colon\DD_{\mm}\nach\DD_{\nn}$ carries canonically the structure of a strong monoidal morphism while $\RR\! U\colon\DD_{\nn}\nach\DD_{\mm}$ is canonically lax monoidal. If $(F,U)$ is a weak monoidal Quillen equivalence then both $\LL\! F$ and $\RR\! U$ carry canonically a strong monoidal structure.
\end{proposition}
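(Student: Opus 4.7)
The plan is to derive everything levelwise and glue using Lemma \ref{lem_monoidaladjunctions} together with the results identifying morphisms of derivators with derived Brown functors. The discussion preceding the proposition already notes that a weak monoidal Quillen adjunction extends to one between diagram categories $\mm^J \rightharpoonup \nn^J$ with the injective model structures; I would first make this explicit by checking that the pointwise-defined lax monoidal structure $(m_J, u_J)$ on $U\colon \nn^J \to \mm^J$ satisfies both weak-monoidality conditions, which reduces to the pointwise case since weak equivalences and cofibrations in the injective structure are detected objectwise and a pointwise Quillen homotopy is still a Quillen homotopy.

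For the lax monoidal structure on $\RR\! U$, the natural transformations $m_J$ and $u_J$ (between suitable right Brown functors and compositions thereof) descend levelwise to 2-cells $\otimes \circ (\RR\! U \times \RR\! U) \to \RR\! U \circ \otimes$ and $\SSS \to \RR\! U \circ \SSS$ in $\Der$. Compatibility with the coherence isomorphisms $\gamma^{\RR\! U}_u$ follows because these are Beck--Chevalley transforms of on-the-nose commutations at the point-set level, and the levelwise lax structures are strictly natural in the shape category. The pentagon and unitality triangle axioms hold after deriving because they hold strictly at each level of $\nn^J$.

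To produce the strong monoidal structure on $\LL\! F$, I would apply Lemma \ref{lem_monoidaladjunctions} in the 2-category $\Der$ to the derived adjunction $(\LL\! F, \RR\! U)\colon \DD_{\mm} \rightharpoonup \DD_{\nn}$ provided by the corollary of Proposition \ref{prop_brown}, converting the lax monoidal structure on $\RR\! U$ into a lax comonoidal structure on $\LL\! F$. One then identifies this comonoidal structure with the derived version of the point-set lax comonoidal structure $(m',u')$ on $F$ obtained from $(m,u)$ levelwise. The hypothesis of being a weak monoidal Quillen adjunction is precisely that $m'$ is a left Quillen homotopy and that $FQ\SSS \to \SSS$ is a weak equivalence; by the lemma on deriving Quillen homotopies both derive to natural isomorphisms, so the comonoidal structure on $\LL\! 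F$ is invertible, which is exactly a strong monoidal structure on $\LL\! F$.

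For the equivalence case, a weak monoidal Quillen equivalence gives rise to an equivalence $(\LL\! F, \RR\! U)$ of derivators, and the last clause of Lemma \ref{lem_monoidaladjunctions} applied in $\Der$ transports the strong monoidal structure on $\LL\! F$ to a strong monoidal structure on $\RR\! U$. The main obstacle I expect is the identification in the previous step: verifying that the 2-categorical Beck--Chevalley construction in $\Der$ applied to the lax monoidal $\RR\! U$ produces the same 2-cells on $\LL\! F$ as the levelwise derivation of the point-set Beck--Chevalley construction $(m',u')$. Once this compatibility is granted, the weak-monoidality hypotheses translate directly into invertibility of the relevant 2-cells, and all coherence axioms follow from their point-set counterparts level by level.
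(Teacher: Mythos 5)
Your proposal is correct and rests on the same key mechanism as the paper: the weak-monoidality hypotheses say precisely that the point-set comonoidal structure $(m',u')$ on $F$ consists of a left Quillen homotopy together with a map that becomes a weak equivalence after cofibrant replacement, so after deriving it becomes invertible and can be inverted to a strong monoidal structure on $\LL\! F$. The difference is one of routing. The paper derives $(m',u')$ directly, levelwise over the injective model structures on the diagram categories, and invokes the lemma on left Quillen homotopies to see that the resulting 2-cells are isomorphisms; there is no detour through the derived adjunction. You instead first derive the lax monoidal structure $(m,u)$ on $U$ to obtain one on $\RR\! U$, then apply the mate construction of Lemma \ref{lem_monoidaladjunctions} inside $\Der$, and then must identify the resulting comonoidal 2-cells on $\LL\! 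F$ with the derived point-set mates --- the compatibility you yourself flag as the main obstacle. That identification does hold (mates are built from pastings with adjunction units and counits, and deriving Brown functors is compatible with such pastings), but it is extra work the direct route avoids; its only payoff is that the lax structure on $\RR\! U$ and the comonoidal structure on $\LL\! F$ are then literal mates of one another at the derivator level. On the other hand, your explicit construction of the lax monoidal structure on $\RR\! U$ fills in a claim of the statement that the paper's own proof passes over in silence, and your handling of the diagram-category extension and of the equivalence case (transporting the strong structure across the equivalence via the last clause of Lemma \ref{lem_monoidaladjunctions}, in its derivator variant) matches the paper.
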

\begin{proof}
By our assumption the natural transformation $m'\colon F\circ\otimes\nach\otimes\circ(F\times F)$ is a Quillen homotopy. By the additional compatibility assumption of the induced map $u':F\SSS\nach\SSS$ we can use $m'$ and $u'$ in order to obtain a strong comonoidal structure on $\LL\! F\colon\DD_{\mm}\nach\DD_{\nn}.$ Since there is the obvious bijection between strong comonoidal and strong monoidal structures obtained by forming inverse natural transformations, we end up with a strong monoidal structure on~$\LL\! F.$ If~$(F,U)$ is actually a weak monoidal Quillen equivalence, we can apply a variant of Lemma \ref{lem_monoidaladjunctions} for derivators to also construct a strong monoidal structure on $\RR\! U.$ 
\end{proof}

\begin{corollary}\label{cor_straightmonoidal}
Let $\mm,\;\nn$ be combinatorial monoidal model categories which are Quillen equivalent through a zigzag of weakly monoidal Quillen equivalences between combinatorial monoidal model categories. Then we obtain a strongly monoidal equivalence of derivators $\DD_{\mm}\stackrel{\simeq}{\nach}\DD_{\nn}.$
\end{corollary}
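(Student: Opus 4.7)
The plan is to reduce the statement to an easy induction on the length of the zigzag using the preceding proposition. A zigzag of weak monoidal Quillen equivalences between combinatorial monoidal model categories is a finite sequence
$$\mm=\mm_0\rightharpoonup\mm_1\leftharpoonup\mm_2\rightharpoonup\ldots\mm_n=\nn,$$
where each arrow denotes a weak monoidal Quillen equivalence (in the direction indicated). For the single-step case, the preceding proposition provides, for a weak monoidal Quillen equivalence $(F,U)\colon\mm_i\rightharpoonup\mm_{i+1}$, canonical strong monoidal structures on \emph{both} $\LL\! F\colon\DD_{\mm_i}\to\DD_{\mm_{i+1}}$ and $\RR\! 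U\colon\DD_{\mm_{i+1}}\to\DD_{\mm_i}$, and the earlier corollary to Proposition \ref{prop_brown} guarantees that the underlying morphisms of derivators form an equivalence. In particular, each edge of the zigzag, regardless of its orientation, produces a strong monoidal equivalence of derivators between $\DD_{\mm_i}$ and $\DD_{\mm_{i+1}}$ (possibly up to choosing the left- or the right-derived side).

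Next, I would note that strong monoidal morphisms of monoidal derivators compose: given strong monoidal $F\colon\EE\to\EE'$ and $G\colon\EE'\to\EE''$ with structure isomorphisms $\otimes\circ(F\times F)\cong F\circ\otimes$, $\SSS\cong F\circ\SSS$ and analogously for $G$, the horizontal pasting produces isomorphisms $\otimes\circ(GF\times GF)\cong GF\circ\otimes$ and $\SSS\cong GF\circ\SSS$, and the associativity/unit coherence diagrams for $GF$ reduce to the coherence diagrams for $F$ and for $G$ stacked on top of one another. Since composition of equivalences in the 2-category $\Der$ is again an equivalence, composing the monoidal equivalences produced in the previous step along the zigzag yields the desired strong monoidal equivalence $\DD_\mm\stackrel{\simeq}{\nach}\DD_\nn$.

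The only genuine content is already packaged in the preceding proposition; the remaining work is the routine 2-categorical bookkeeping of composing strong monoidal 1-cells in $\MonDer$, which is the main (and only) point where care is required, and which is completely formal once the single-step case is granted.
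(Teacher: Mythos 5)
Your argument is correct and is exactly the (omitted) argument the paper intends: the corollary is stated without proof as an immediate consequence of the preceding proposition, which supplies a strong monoidal structure on both $\LL\! F$ and $\RR\! U$ for each weak monoidal Quillen equivalence, so that every edge of the zigzag yields a strong monoidal equivalence of derivators irrespective of its orientation. The remaining observation that strong monoidal equivalences compose is the routine bookkeeping you describe, so nothing is missing.
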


As an illustration we want to apply this to the situation described in \cite{shipley_spectradga}. In that paper, Shipley constructs a zigzag of three weak monoidal Quillen equivalences between the category of unbounded chain complexes of abelian groups and the category of $H\mathbb{Z}$-module spectra. To be more specific, the monoidal model for spectra is chosen to be the category of symmetric spectra (\cite{HSS}) and $H\mathbb{Z}$ denotes the integral Eilenberg-MacLane spectrum. The chain of weak monoidal Quillen equivalence passes through the following intermediate model categories
$$H\mathbb{Z}-\Mod\simeq_Q \Sp^\mathsf{\Sigma}(\mathsf{sAb})\simeq_Q\Sp^\mathsf{\Sigma}(\Ch^+)\simeq_Q\Ch.$$
\noindent
Here, $\Ch^+$ is the category of non-negatively graded chain complexes of abelian groups, $\mathsf{sAb}$ is the category of simplicial abelian groups and  $\Sp^\mathsf{\Sigma}(-)$ denotes Hovey's stabilization process by forming symmetric spectra internal to a sufficiently nice model category (\cite{hovey_spectra}). There is a similar such chain of weak monoidal Quillen equivalences if we replace the integers by an arbitrary commutative ground ring $k.$ Since all the four model categories occurring in that chain are combinatorial we can apply the last corollary in order to obtain the following example. 

\begin{example}
For a commutative ring $k$ let us denote by $Hk$ the symmetric Eilenberg-MacLane ring spectrum. Then we have a strong monoidal equivalence of derivators 
$$\DD_k\simeq\DD_{Hk}.$$
\end{example}

\subsection{The bicategory of distributors associated to a monoidal derivator}

In this subsection, $(\EE,\otimes,\SSS)$ will be a (symmetric) monoidal derivator. The aim of this subsection is to show that associated to~$\EE$ there is a (symmetric) monoidal bicategory $\DistE$ of distributors over~$\EE$ (with a reasonably well-behaved notion of a trace). This bicategory together with its canonical action on~$\EE$ encodes a lot of structure. That structure specializes, in particular, to weighted homotopy (co)limits in the case of a closed monoidal derivator and will be taken up again in \cite{groth_enriched} in the context of closed modules over a nice monoidal derivator.

The objects of the bicategory $\DistE$ will just be the small categories (or, in the case of a derivator of type $\Dia$ only the categories lying in $\Dia$). Now, given two such categories~$J$ and~$K,$ for the category of morphisms from~$J$ to~$K$ we set $\DistE(J,K)=\EE(J\times K^{\op}).$ An object $X\in\DistE(J,K)$ will be denoted by $X\colon J\stackrel{\EE}{\nach}K.$ The next aim is to construct a composition functor 
$$\circ_K\colon\DistE(J,K)\times\DistE(K,L)\nach\DistE(J,L).$$
Motivated by the observation that the tensor product of (bi)modules over a classical ring can be considered as a coend construction we proceed as follows. Given $X\colon J\stackrel{\EE}{\nach}K$ and $Y\colon K\stackrel{\EE}{\nach} L$ we define $X\circ_K Y\colon J\stackrel{\EE}{\nach} L$ by:
$$
X\circ_K Y=\int^K X\otimes Y
$$
Thus, the `composition over~$K$' is defined by the following composition:
$$\circ_K\colon\EE(J\times K^{\op})\times \EE(K\times L^{\op})\stackrel{\otimes}{\nach}\EE(J\times K^{\op}\times K\times L^{\op})\stackrel{\int^K}{\nach}\EE(J\times L^{\op})$$
The associativity constraint of the composition is obtained as follows. Let us assume we are given three distributors $X\colon J\stackrel{\EE}{\nach}K,$ $Y\colon K\stackrel{\EE}{\nach}L,$ and $Z\colon L \stackrel{\EE}{\nach}M.$ Then an isomorphism
$$
(X\circ_K Y)\circ_L Z\cong X\circ_K (Y\circ_L Z)
$$ 
in $\DistE(J,M)$ is obtained by the following pasting of natural isomorphisms:
\begin{eqnarray*}
(X\circ_K Y)\circ_L Z&=&\int^L\big(\int^K X\otimes Y\big)\otimes Z\\
&\cong&\int^L\int^K\big( (X\otimes Y)\otimes Z\big)\\
&\cong&\int^K\int^L \big(X\otimes (Y\otimes Z) \big)\\
&\cong&\int^KX\otimes(\int^L Y\otimes Z)\\
&=&X\circ_K(Y\circ_L Z)
\end{eqnarray*}
In this chain, the first and the last isomorphism are using that the monoidal structure preserves homotopy coends separately in each variable (Lemma \ref{lemma_cocontcoend}). The isomorphism in the middle is obtained by a combination of the Fubini-like Theorem (Lemma \ref{lemma_Fubini}) with the associativity constraint of~$\EE.$

The bicategory $\DistE$ also carries a monoidal structure. On objects, this monoidal structure is given by the product of small categories, while on morphism categories it is essentially given by the bimorphism~$\otimes$. More precisely, the functor $$\otimes\colon\DistE(J_1,K_1)\times\DistE(J_2,K_2)\nach\DistE(J_1\times J_2,K_1\times K_2)$$ 
is given by:
$$
\xymatrix{
\EE(J_1\times K_1^{\op})\times\EE(J_2\times K_2^{\op})\ar[r]^-\otimes& \EE(J_1\times K_1^{\op}\times J_2\times K_2^{\op})\ar[r]^-t&\EE(J_1\times J_2\times (K_1\times K_2)^{\op})
}
$$
One easily constructs a symmetry constraint for the case of a symmetric monoidal derivator~$\EE.$ It can be shown that one gets the following result.

\begin{theorem}
If $\EE$ is a (symmetric) monoidal derivator then there is a (symmetric) monoidal bicategory $\DistE$ of distributors in~$\EE$ with objects the small categories and morphism categories given by~$\DistE(J,K)=\EE(J\times K^{\op}).$
\end{theorem}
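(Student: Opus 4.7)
The plan is to furnish the missing pieces of the bicategorical structure that are only sketched before the theorem and then verify the coherence axioms. The associativity constraint and the composition functor are already described, so the central tasks are (i) constructing identity 1-cells and the unitor isomorphisms, (ii) checking the pentagon and triangle coherence axioms, and (iii) assembling the monoidal bicategory structure (including the symmetry, when $\EE$ is symmetric monoidal).

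For the identity 1-cells, I would set $\id_J=(t,s)_!\,p^\ast\SSS\in\EE(J\times J^{\op})$, where $(t,s)\colon Ar(J)^{\tw}\to J^{\op}\times J\cong J\times J^{\op}$ is the discrete fibration from the twisted arrow construction and $p\colon Ar(J)^{\tw}\to e$ is the unique functor; this is the $\EE$-valued analogue of the hom-profunctor, obtained by left-Kan-extending the unit along the $\EE$-enrichment of $\Hom_J(-,-)$. The unitor isomorphisms $X\circ_J\id_J\cong X\cong\id_J\circ_J X$ then amount to an enriched co-Yoneda lemma inside the derivator, which I would establish from the definition $X\circ_J\id_J=\pr_!(t,s)^\ast(X\otimes(t,s)_!p^\ast\SSS)$ by combining three ingredients: the right unit isomorphism of $\otimes$, the projection/base change identifying $(t,s)^\ast(t,s)_!$ over the twisted arrow diagonal, and the standard reduction of a homotopy coend over $J$ of $X\otimes\Hom_J(-,j)$ to $X$, which follows from \cite[Section 2]{groth_derivator} because $(t,s)$ is a discrete fibration and so the base change is computed pointwise.

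For the coherence axioms I would reduce everything to coherence inside $\EE$ and functoriality of coends. The pentagon follows from the five-step chain of isomorphisms already written out before the theorem: each step is either an instance of the Fubini isomorphism (Lemma \ref{lemma_Fubini}), an application of the bicocontinuity of $\otimes$ (Lemma \ref{lemma_cocontcoend}), or the associator of $\EE$, and the well-behavedness of Beck-Chevalley transformations under pasting (as used in \cite[Lemma 1.18]{groth_derivator}) guarantees that the two pentagonal paths agree. The triangle axiom is checked by inserting the definition of $\id_K$ into $X\circ_K\id_K\circ_K Y$ and comparing the two natural reductions, each of which again collapses a coend via the (co-)Yoneda reduction established in the previous step.

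For the monoidal structure, the tensor on 1-cells is given by the displayed composition of $\otimes$ with the reshuffling isomorphism of products of categories, and the monoidal unit is the terminal category $e$ (so that $\DistE(e,e)=\EE(e)$ and the unit 1-cell is $\SSS$). The pseudo-functoriality of $\otimes\colon\DistE(J_1,K_1)\times\DistE(J_2,K_2)\to\DistE(J_1\times J_2,K_1\times K_2)$ with respect to composition reduces, via Fubini and bicocontinuity of $\otimes$, to the interchange law, and the associator, unitors, and (in the symmetric case) symmetry of $\DistE$ are induced from those of $\EE$; the required modifications and coherence 2-cells (associator of the monoidal bicategory, braiding, syllepsis, etc., as in the definition of a symmetric monoidal bicategory) are transported from the coherence data of $\EE$ by the same Beck-Chevalley bookkeeping. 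The main obstacle is not any single step but the sheer amount of coherence bookkeeping required to verify the axioms of a symmetric monoidal bicategory; everything will come down to pasting invariance of Beck-Chevalley morphisms together with coherence in $\EE$, but a fully rigorous verification requires systematically identifying each coherence 2-cell on the $\DistE$ side with a coend of a coherence 2-cell of $\EE$, which is why the result is only stated and the detailed check is left implicit.
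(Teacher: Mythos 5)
Your proposal follows the same route the paper takes: composition by homotopy coends, associativity from the Fubini isomorphism together with cocontinuity of $\otimes$ in each variable, and the monoidal structure from the external product of categories — indeed the paper only sketches this construction and defers the full verification to the follow-up work cited after the theorem. Your additional contribution, the identity $1$-cells $\id_J=(t,s)_!\,p^\ast\SSS$ built from the twisted arrow category and the co-Yoneda collapse of $\int^J X\otimes\id_J$ via the discrete-fibration property of $(t,s)$, is exactly the standard (and correct) way to supply the piece the paper's sketch omits, so the proposal is a sound, more complete version of the paper's own argument.
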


If the monoidal structure on~$\EE$ happens to be symmetric, then $\DistE$ allows for a well-behaved notion of a trace associated to an `endo-distributor'. For a small category~$J$, let us define the \emph{trace} by:
$$tr_J=tr\colon\DistE(J,J)\nach\EE(e)\colon X\mapsto tr(X)=\int^J X$$
This trace is well-behaved in the sense that we have the following two formulas. For convenience, in the proposition, we denote the composition functors $\circ_K$ simply by juxtaposition.

\begin{proposition}
Let~$\EE$ be a symmetric monoidal derivator and let us consider distributors $X\colon J\stackrel{\EE}{\nach}K,$ $Y\colon K\stackrel{\EE}{\nach}J,$ and $Z_i\colon J_i\stackrel{\EE}{\nach}J_i,\:i=1,2.$  Then there are natural isomorphisms:
$$
tr(XY)\cong tr(YX)\qquad\mbox{and}\qquad tr(Z_1\otimes Z_2)\cong tr(Z_1)\otimes tr(Z_2)
$$
\end{proposition}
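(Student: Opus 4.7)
The plan is to deduce both isomorphisms from three formal ingredients that are already in our hands: the Fubini theorem for iterated homotopy coends (Lemma~\ref{lemma_Fubini}), the symmetry constraint of the symmetric monoidal derivator~$\EE$, and the fact that the bimorphism~$\otimes$ preserves homotopy coends separately in each variable (Lemma~\ref{lemma_cocontcoend}). These are precisely the tools that were used to establish associativity of the composition~$\circ_K$ in the paragraph preceding the theorem, so the two calculations below should fit the same template.

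For the trace-cyclicity statement $tr(XY)\cong tr(YX)$, I would first unfold the definitions to rewrite
$$tr(XY)\;=\;\int^J\!\!\int^K X\otimes Y\qquad\mbox{and}\qquad tr(YX)\;=\;\int^K\!\!\int^J Y\otimes X,$$
where both integrands are (up to permutation of the outer factors) the same object in $\EE$ of a fourfold product built from~$J,\:K$ and their opposites. The Fubini-like Lemma~\ref{lemma_Fubini} swaps the order of the two iterated coends on the left to give $\int^K\!\int^J X\otimes Y$. The symmetry constraint~$t$ of~$\EE$ then supplies a canonical isomorphism $X\otimes Y\cong Y\otimes X$ (modulo the twist that permutes the four factor categories), and reading off the resulting expression identifies it with $tr(YX)$.

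For the multiplicativity statement $tr(Z_1\otimes Z_2)\cong tr(Z_1)\otimes tr(Z_2)$, I start from $tr(Z_1\otimes Z_2)=\int^{J_1\times J_2}Z_1\otimes Z_2$. Lemma~\ref{lemma_Fubini} factors the coend over~$J_1\times J_2$ into the iterated coend $\int^{J_1}\!\int^{J_2}$. Using that~$\otimes$ commutes with homotopy coends in its second argument (the second clause of Lemma~\ref{lemma_cocontcoend}), I pull the inner coend inside the second tensor factor to obtain $\int^{J_1}Z_1\otimes\bigl(\int^{J_2}Z_2\bigr)$. A second application of the same lemma for the first argument then yields $\bigl(\int^{J_1}Z_1\bigr)\otimes\bigl(\int^{J_2}Z_2\bigr)=tr(Z_1)\otimes tr(Z_2)$.

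I expect the main obstacle to be purely bookkeeping: one must carefully track how the twist isomorphisms permuting the factor categories $J,\:K,\:J_i$ and their opposites interact with the Fubini isomorphism, the symmetry of~$\otimes$, and the coend-preservation property of~$\otimes$. The associativity pasting for~$\circ_K$ already displayed serves as a template for how such chains of coherence isomorphisms can be organised, and the naturality of the resulting isomorphisms in $X,\:Y$ and in $Z_1,\:Z_2$ is automatic from the naturality of each of the three ingredients.
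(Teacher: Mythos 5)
Your proposal is correct and follows essentially the same route as the paper, whose proof is a one-line appeal to the Fubini-type theorem for homotopy coends (Lemma~\ref{lemma_Fubini}). You have in fact made explicit the two additional ingredients the paper leaves implicit --- the symmetry constraint of~$\EE$ for the cyclicity formula, and the preservation of homotopy coends in each variable (Lemma~\ref{lemma_cocontcoend}) for the multiplicativity formula --- exactly as in the associativity argument for~$\circ_K$ displayed earlier in that subsection.
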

\begin{proof}
These formulas follow from the Fubini-type Theorem for homotopy coends (Lemma \ref{lemma_Fubini}).
\end{proof}
\noindent
Moreover, these trace functors satisfy certain coherence conditions which are encoded by the notion of a `shadow functor' (\cite{shulmanponto_shadows}). This bicategory will be studied in more detail in \cite{grothpontoshulman_distributors}.

Let us now turn to the case of a \emph{closed} monoidal derivator. In that case the structure given by the bicategory of distributors~$\DistE$ contains, in particular, the \emph{weighted homotopy (co)limit functors.} In order to see this let us establish the following easy but very convenient lemma.

\begin{lemma}\label{lemma_divisible}
Let~$\mc,\:\md,$ and~$\me$ be categories and let $\otimes\colon\mc\times\md\nach\me$ be a functor which is divisible on both sides. If $L\colon\me\nach\me'$ is a left adjoint functor then the composition $L\circ\otimes\colon\mc\times\md\nach\me'$ is also divisible on both sides.
\end{lemma}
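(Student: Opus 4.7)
The plan is to reduce the statement to the mere composition of two adjunction isomorphisms. Since $L\colon\me\to\me'$ is a left adjoint, it comes with a right adjoint $R\colon\me'\to\me$ with natural isomorphism $\hhom_{\me'}(LW,Z')\cong\hhom_\me(W,RZ')$. Since $\otimes$ is divisible on both sides, by hypothesis we have functors $\Hom_l\colon\mc^{\op}\times\me\to\md$ and $\Hom_r\colon\md^{\op}\times\me\to\mc$ with the usual pair of natural isomorphisms expressing the two-variable adjunction.

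First, I would simply define the candidate division functors for $L\circ\otimes$ by post-composing with $R$ in the last variable, namely
$$\Hom_l'(X,Z')=\Hom_l(X,RZ')\qquad\text{and}\qquad\Hom_r'(Y,Z')=\Hom_r(Y,RZ').$$
These are functorial in all arguments as composites of functors.

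Next, I would chain the two adjunction isomorphisms. For $X\in\mc$, $Y\in\md$, and $Z'\in\me'$, we have
$$\hhom_{\me'}(L(X\otimes Y),Z')\cong\hhom_\me(X\otimes Y,RZ')\cong\hhom_\md(Y,\Hom_l(X,RZ'))=\hhom_\md(Y,\Hom_l'(X,Z')),$$
and analogously
$$\hhom_{\me'}(L(X\otimes Y),Z')\cong\hhom_\me(X\otimes Y,RZ')\cong\hhom_\mc(X,\Hom_r(Y,RZ'))=\hhom_\mc(X,\Hom_r'(Y,Z')).$$
Both chains are natural in all three variables because each isomorphism in the chain is, so this exhibits $L\circ\otimes$ as divisible on both sides with the advertised division functors.

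There is no real obstacle here; the argument is entirely formal and amounts to noting that divisibility on each side is preserved by post-composition with a left adjoint, simply because having a right adjoint is closed under composition. The only thing worth spelling out, should one want to be pedantic, is the naturality of the composite isomorphism in $X$, $Y$, and $Z'$, which follows immediately from the naturality of the two constituent isomorphisms.
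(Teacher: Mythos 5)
Your proof is correct and is exactly the formal argument the lemma calls for: the paper states this lemma without proof precisely because it reduces, as you show, to composing the one-variable adjunction $(L,R)$ with the given two-variable adjunction, setting $\Hom_l'(X,Z')=\Hom_l(X,RZ')$ and $\Hom_r'(Y,Z')=\Hom_r(Y,RZ')$. Nothing is missing.
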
 

\begin{corollary}
If~$\EE$ is a closed monoidal derivator then the bicategory~$\DistE$ of distributors in~$\EE$ has a composition law which is divisible on both sides.
\end{corollary}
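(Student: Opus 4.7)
The plan is to apply Lemma \ref{lemma_divisible} to the explicit factorization of the composition functor of $\DistE.$ By the very definition given in the previous subsection, for small categories $J,\:K,\:L$ the composition law is the composite
$$\circ_K\colon\EE(J\times K^{\op})\times\EE(K\times L^{\op})\stackrel{\otimes}{\nach}\EE(J\times K^{\op}\times K\times L^{\op})\stackrel{\int^K}{\nach}\EE(J\times L^{\op}),$$
where the second map is the homotopy coend $\int^K=\pr_!\circ(t,s)^\ast.$ So it suffices to establish that the first arrow is divisible on both sides as a plain bifunctor of categories and that the second arrow is a left adjoint, and then invoke Lemma \ref{lemma_divisible}.

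First I would observe that since $\EE$ is assumed biclosed, the monoidal bimorphism $\otimes\colon(\EE,\EE)\nach\EE$ is a left adjoint of two variables. In particular, its component at the pair $(J\times K^{\op},\:K\times L^{\op})$ is an ordinary bifunctor of categories that is divisible on both sides, producing two `division' functors with values in $\EE(J\times K^{\op})$ and $\EE(K\times L^{\op})$ respectively.

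Next I would check that the homotopy coend $\int^K$ is a left adjoint. Both of its factors are: the precomposition functor $(t,s)^\ast$ is by definition of a derivator a left adjoint, with right adjoint the homotopy right Kan extension $(t,s)_\ast$; and the homotopy left Kan extension $\pr_!$ is tautologically left adjoint to $\pr^\ast.$ A composition of left adjoints is a left adjoint, so $\int^K$ is one.

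With these two observations in hand, Lemma \ref{lemma_divisible} applies directly, with $\otimes$ in the role of the divisible-on-both-sides bifunctor and $\int^K$ in the role of the post-composing left adjoint, yielding that $\circ_K$ is itself divisible on both sides. Since $J,\:K,\:L$ were arbitrary, the entire composition law of $\DistE$ is divisible on both sides. I do not expect any real obstacle here; the argument is essentially bookkeeping, the only mild subtlety being the recognition that precomposition functors in a derivator are themselves left adjoints (being left adjoint to the homotopy right Kan extension) and not merely right adjoints.
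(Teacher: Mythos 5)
Your argument is correct and is essentially identical to the paper's proof: both factor $\circ_K$ as the levelwise-divisible bimorphism $\otimes$ followed by the left adjoints $(t,s)^\ast$ and $\pr_!$, and then invoke Lemma \ref{lemma_divisible}. Your explicit remark that $(t,s)^\ast$ is a left adjoint because it admits the homotopy right Kan extension as a right adjoint is exactly the observation the paper relies on.
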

\begin{proof}
It suffices to observe that the composition law~$\circ_K\colon\DistE(J,K)\times\DistE(K,L)\nach\DistE(J,L)$ is defined by the following composition:
$$\EE(J\times K^{\op})\times\EE(K\times L^{\op})\stackrel{\otimes}{\nach} \EE(J\times K^{\op}\times K\times L^{\op})\stackrel{(t,s)^\ast}{\nach} \EE(J\times Ar(K)^{\tw}\times L^{\op})\stackrel{p_!}{\nach}\EE(J\times L^{\op})$$
By our assumption on $\EE,$ the first functor in this composition is divisible on both sides. Moreover, the other two functors are left adjoints so that we can conclude by Lemma \ref{lemma_divisible}.
\end{proof}

Thus, given a closed monoidal derivator~$\EE$ then $\circ_K\colon\DistE(J,K)\times\DistE(K,L)\nach\DistE(J,L)$ is divisible on both sides by certain functors
$$\DistE(J,K)^{\op}\times\DistE(J,L)\nach\DistE(K,L)\qquad\mbox{and}\qquad \DistE(K,L)^{\op}\times\DistE(J,L)\nach\DistE(J,K)$$
Specializing the composition law to the case of $J=L=e$ and the first of the above division functors to the case $K=L=e$ we obtain functors
$$\EE(K^{\op})\times\EE(K)\nach\EE(e)\qquad\mbox{and}\qquad\EE(J)^{\op}\times\EE(J)\nach\EE(e)$$
which give us \emph{weighted homotopy colimit} and \emph{weighted homotopy limit functors} respectively. The second division functor gives rise to some sort of enrichment of the derivator~$\EE.$ We will take up these issues again in the sequel to this paper (cf.\ \cite{groth_enriched}).

\section{Additive derivators}\label{section_additive}

\subsection{Additive derivators and the canonical pretriangulated structures}

For a derivator $\DD$ and a category $J$ it is immediate that $\DD(J)$ has initial and final objects as well as finite coproducts and finite products (cf.\ Subsection 1.1 of \cite{groth_derivator}). A pointed derivator is a derivator such that every initial object of the underlying category $\DD(e)$ is also final. It follows then that all values $\DD(J)$ are pointed. For additive derivators, it similarly suffices to impose an additivity assumption on the underlying category. For us the notion of an additive category does \emph{not} include an enrichment in abelian groups. The additional \emph{structure} given by the enrichment in abelian groups can be uniquely reconstructed using the exactness \emph{properties} of an additive category. Thus, the category $\DD(e)$ is assumed to be pointed and the canonical map from the coproduct of two objects to the product of them is to be an isomorphism. Moreover, for every object there is a self-map which `behaves as an additive inverse of the identity'. For a precise formulation of this axiom, compare to Definition~$8.2.8$ of \cite{schapira}. Alternatively, one can demand the shear map 
$${1\;\;1 \choose 0\;\;1}\colon X\sqcup X\nach X\times X$$
to be an isomorphism for each object $X$. 

\begin{definition}\label{def_add}
A derivator $\DD$ is \emph{additive} if the underlying category $\DD(e)$ is additive.
\end{definition}

\begin{proposition}
If a derivator $\DD$ is additive, then all categories $\DD(J)$ are additive and for any functor $u\colon J\nach K$ the induced functors $u^\ast,\;u_!,$ and $u_\ast$ are additive.
\end{proposition}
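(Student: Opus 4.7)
The plan is to reduce every assertion pointwise to the fact that $\DD(e)$ is additive, by exploiting (Der2) together with the fact that the evaluation functors $j^\ast\colon\DD(J)\nach\DD(e)$, being both left and right adjoints, preserve all finite limits and colimits. Let me also use throughout that, for an additive category, the property of being additive is detected by an internal condition on objects (the shear map is invertible), so no extra group-theoretic compatibility needs to be transported.

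First I would observe that since $\DD(e)$ is additive it is, in particular, pointed, so $\DD$ is a pointed derivator and hence each $\DD(J)$ is pointed (this step uses the corresponding result for pointed derivators in the companion paper; concretely, the zero object of $\DD(J)$ is the essentially unique object whose pointwise values are the zero object of $\DD(e)$). Next, by the general derivator axioms each $\DD(J)$ has all finite products and coproducts, so for $X,Y\in\DD(J)$ one has the canonical comparison map $X\sqcup Y\nach X\times Y$ as well as the shear map ${1\;\;1\choose 0\;\;1}\colon X\sqcup X\nach X\times X$. I would then apply $j^\ast$ to these morphisms for every $j\in J$: since $j^\ast$ preserves the zero object, finite coproducts and finite products, its image is, up to the canonical pseudo-naturality isomorphisms $\gamma$ of these adjoints, the analogous comparison and shear map at $j^\ast X,\,j^\ast Y\in\DD(e)$. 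By the additivity of $\DD(e)$ these are isomorphisms, so by the conservativity clause of (Der2) the original maps are isomorphisms, showing that $\DD(J)$ is additive.

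For the three functors, recall that $u^\ast$ has adjoints on both sides (by the axioms of a derivator), hence preserves all finite limits and colimits; in particular it preserves the zero object and either of the two (coinciding) biproduct structures, and is therefore additive. The functor $u_!$ is a left adjoint, hence preserves the initial object and finite coproducts, which in the additive world are the zero object and the biproducts respectively; thus $u_!$ is additive. Dually, $u_\ast$ is a right adjoint and preserves the terminal object together with finite products, so it too is additive.

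The only mildly non-routine step is the pointwise reduction for the shear map: one must check that $j^\ast$ transports the shear map of $X\in\DD(J)$ to the shear map of $j^\ast X\in\DD(e)$. This is formal, since the coherence isomorphisms $\gamma$ of $j^\ast$ with respect to $\sqcup$ and $\times$ intertwine the diagonal and codiagonal in $\DD(J)$ with those in $\DD(e)$, and the shear map is assembled from the identity, the zero map, diagonals and codiagonals; so after identifying $j^\ast(X\sqcup X)\cong j^\ast X\sqcup j^\ast X$ and $j^\ast(X\times X)\cong j^\ast X\times j^\ast X$ via $\gamma$, the two shear maps correspond, and conservativity finishes the argument.
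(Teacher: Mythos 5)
Your proof is correct and follows essentially the same route as the paper's: reduce everything pointwise via (Der2) and the fact that the evaluation functors have adjoints on both sides, then deduce additivity of $u^\ast$, $u_!$, and $u_\ast$ from each having an adjoint on at least one side. The extra care you take in identifying $j^\ast$ of the shear map with the shear map of $j^\ast X$ is a welcome elaboration of a step the paper leaves implicit, but it is not a different argument.
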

\begin{proof}
Let us assume $\DD$ to be additive and let us consider an arbitrary category $J$. We already know that $\DD(J)$ is pointed. Since isomorphisms in $\DD(J)$ can be tested pointwise and since the evaluation functors have adjoints on both sides it is easy to see that finite coproducts and finite products in $\DD(J)$ are canonically isomorphic. Similarly, let $X\in\DD(J)$ be an arbitrary object and let us consider the shear map ${1\;\;1 \choose 0\;\;1}\colon X\sqcup X\nach X\times X.$ This map is an isomorphism if and only if this is the case when evaluated at all objects $j\in J$. But $j^\ast{1\;\;1 \choose 0\;\;1}$ can be canonically identified with the shear map of $j^\ast X\in\DD(e)$ which is an isomorphism by assumption. Finally, given a functor $u\colon J\nach K,$ the induced functors $u^\ast,\;u_!,$ and $u_\ast$ are all additive since each of them has an adjoint on at least one side.
\end{proof}

In contrast to the above definition, let us call a prederivator additive if all values and all precomposition functors are additive. Let us recall from \cite[Theorem 1.31]{groth_derivator} that given a derivator~$\DD$ and a small category~$M$ then the prederivator~$\DD^M\colon J\mapsto\DD(J\times M)$ is again a derivator. A combination of this together with the last proposition gives the fourth example.

\begin{example}\label{ex_additive}
{\rm i)}  Let~$R$ be a ring and let $\Ch_{\geq 0}(R)$ denote the category of non-negative chain complexes of left~$R$-modules. It is shown in \cite[Section 7]{dwyerspalinski} that~$\Ch_{\geq 0}(R)$ can be endowed with a cofibrantly-generated model structure with quasi-isomorphisms as weak equivalences. Hence,
$$\DD_R^{\geq 0}:=\DD_{\Ch_{\geq 0}(R)}\colon\Cat^{\op}\to\CAT\colon J\mapsto \Ho(\Ch_{\geq 0}(R)^J)=D_{\geq 0}(R-\Mod^J)$$
is an example of an additive derivator which is \emph{not} stable. Here,~$D_{\geq 0}(-)$ denotes the formation of the non-negative derived category of an abelian category.\\
{\rm ii)} Let $\DD$ be a stable derivator. Then we showed in Section 4 of \cite{groth_derivator} that $\DD$ is also an additive derivator. So, this applies, in particular, to derivators associated to stable (combinatorial) model categories or exact categories in the sense of Quillen (\cite{quillen_ktheory}).\\
{\rm iii)} The prederivator represented by a category is additive if and only if the representing category is additive.\\
{\rm iv)} A derivator~$\DD$ is additive if and only if the derivator~$\DD^M$ is additive for each small category~$M.$\\
{\rm v)} A derivator~$\DD$ is additive if and only if~$\DD^{\op}$ is additive.
\end{example}

In \cite[Section 4]{groth_derivator} we showed that the values of a \emph{stable} derivator (see \cite[Def.\ 4.1]{groth_derivator}) can be canonically endowed with the structure of a triangulated category. Moreover, the precomposition and the homotopy Kan extension functors can be canonically turned into exact functors with respect to these triangulations. A careful analysis of the corresponding proofs will show that we obtain weaker versions of such results in the context of a strong, additive derivator (see Theorems~\ref{thm_leftright},~\ref{thm_pretriang} and Corollary~\ref{cor_can}). For convenience let us recall the definition of a right triangulated category (see \cite{kellervossieck, beligiannis_left}).

\begin{definition}
Let~$\ma$ be an additive category with an additive functor~$\Sigma\colon\ma\nach\ma$ and a class of so-called \emph{distinguished right triangles} $X\nach Y\nach Z\nach \Sigma X.$ The pair consisting of~$\Sigma$ and the class of distinguished right triangles determines a \emph{right triangulated structure} on~$\ma$ if the following four axioms are satisfied. In this case, the triple consisting of the category, the endofunctor, and the class of distinguished triangles is called a \emph{right triangulated category}.\\
{\rm (RT1)} For every $X\in\ma,$ the right triangle $0\to X\stackrel{\id}{\to}X\to 0$ is distinguished. Every morphism in~$\ma$ occurs as the first morphism in a distinguished right triangle and the class of distinguished right triangles is replete, i.e., is closed under isomorphisms.\\
{\rm (RT2)} If $X\stackrel{f}{\nach} Y\stackrel{g}{\nach} Z\stackrel{h}{\nach} \Sigma X$ is a distinguished right triangle then so is the rotated right triangle $Y\stackrel{g}{\nach} Z\stackrel{h}{\nach} \Sigma X\stackrel{-f}{\nach}\Sigma Y$ is.\\
{\rm (RT3)} Given two distinguished right triangles and a commutative solid arrow diagram
$$\xymatrix{
X\ar[r]\ar[d]^u& Y\ar[r]\ar[d]^v & Z\ar[r]\ar@{-->}[d]^w & \Sigma X\ar[d]^{\Sigma u}\\
X'\ar[r] & Y'\ar[r] & Z'\ar[r] & \Sigma X'
}
$$
there exists a dashed arrow $w\colon Z\nach Z'$ as indicated such that the extended diagram commutes.\\
{\rm (RT4)} For every pair of composable arrows $f_3\colon X\stackrel{f_1}{\nach} Y\stackrel{f_2}{\nach} Z$ there is a commutative diagram
$$\xymatrix{
X\ar[r]^{f_1}\ar@{=}[d]& Y \ar[r]^{g_1}\ar[d]_{f_2}& C_1\ar[r]^{h_1}\ar[d]& \Sigma X\ar@{=}[d]\\
X\ar[r]_{f_3}& Z\ar[d]_{g_2}\ar[r]_{g_3}& C_3\ar[r]_{h_3}\ar[d]& \Sigma X\ar[d]^{\Sigma f_1}\\
&C_2\ar[d]_{h_2}\ar@{=}[r]&C_2\ar[d]^{\Sigma g_1\circ h_2}\ar[r]_{h_2}&\Sigma Y\\
&\Sigma Y\ar[r]_{\Sigma g_1}& \Sigma C_1& 
}
$$
in which the rows and columns are distinguished right triangles.
\end{definition}

Recall from \cite[Subsection 3.3]{groth_derivator} that given a pointed derivator we have an adjunction $(\Sigma,\Omega)\colon\DD(J)\rightharpoonup\DD(J).$ In \cite[Subsection 4.1]{groth_derivator} we established the additivity of a stable derivator. A key step in that context was to to show that loop objects in a stable derivator are \emph{group} objects as opposed to only monoids. But that proof did not use the stability in an essential way. In fact, we have the following result. For the concatenation of loops and inversion of loops see \cite[Subsection 4.1]{groth_derivator}.

\begin{proposition}\label{prop_loopgroup}
Let~$\DD$ be an additive derivator and let $X\in\DD(J)$ for some small category~$J$. Then the concatenation of loops $\ast\colon\Omega X\oplus\Omega X\to\Omega X$ and the inversion of loops~$\sigma^\ast\colon\Omega X\to\Omega X$ turn~$\Omega X$ into a group object of~$\DD(J).$ Moreover, given an object~$U\in\DD(J)$ and morphisms $f,\:g\colon U\to \Omega X$ then we have:
$$f+g\quad=\quad f\ast g\qquad\mbox{and}\qquad -f\quad = \quad\sigma^\ast f$$
\end{proposition}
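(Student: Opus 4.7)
My plan is to follow essentially verbatim the corresponding argument from \cite[Subsection 4.1]{groth_derivator}, exploiting the author's hint that stability was nowhere used in an essential way and that the required additive structure is now in place by assumption.

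First I would recall that the concatenation $\ast\colon\Omega X\oplus\Omega X\to\Omega X$ and the inversion $\sigma^\ast\colon\Omega X\to\Omega X$ are constructed using only the pointed structure of $\DD$ together with suitable homotopy Kan extensions along maps of finite posets. These constructions already make $\Omega X$ a monoid object with unit the zero morphism $0\colon 0\to\Omega X$, without any additivity hypothesis. What remains is therefore to upgrade the monoid structure to a group structure and to identify the resulting group law with the one coming from additivity of $\DD(J)$.

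The decisive step is an Eckmann--Hilton argument. Since $\DD(J)$ is additive, every hom-set $\hhom_{\DD(J)}(U,\Omega X)$ with $U\in\DD(J)$ carries a canonical abelian group structure~$+$, and post-composition with any morphism of $\DD(J)$ is a homomorphism of abelian groups. Applied to $\ast$ and using the canonical identification $\hhom(U,\Omega X\oplus\Omega X)\cong\hhom(U,\Omega X)\oplus\hhom(U,\Omega X)$, this yields the interchange identity
$$(f_1+g_1)\ast(f_2+g_2)=(f_1\ast f_2)+(g_1\ast g_2)$$
for all $f_i,g_i\colon U\to\Omega X$. Because the unit of $\ast$ is the zero morphism, which is also the unit of $+$, the standard Eckmann--Hilton calculation forces $f\ast g=f+g$ (and, as a byproduct, commutativity of $\ast$).

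Once $f\ast g=f+g$ is established, the remaining claim about $\sigma^\ast$ is formal: by the defining property of the inversion of loops one has $f\ast\sigma^\ast f=0$, and rewriting via the previous identity gives $f+\sigma^\ast f=0$, that is, $\sigma^\ast f=-f$. In particular every morphism into $\Omega X$ has an inverse under $\ast$ supplied by $\sigma^\ast$, so together with the monoid structure $(\ast,0)$ this exhibits $\Omega X$ as a group object of $\DD(J)$. I expect no genuine obstacle here: the whole argument is an instance of Eckmann--Hilton, and the only delicate point is to confirm that the quoted constructions of $\ast$ and $\sigma^\ast$ in \cite[Subsection 4.1]{groth_derivator} really do rely only on pointedness, which is precisely the content of the author's preparatory remark.
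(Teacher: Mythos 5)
Your approach is the paper's approach: the printed proof is literally ``exactly the same proof as in the stable case does the job,'' and your Eckmann--Hilton reconstruction of that argument is correct and captures the real content. Indeed, once one knows that $\ast\colon\Omega X\oplus\Omega X\to\Omega X$ restricts to the identity along both biproduct inclusions (unitality of the concatenation, available already for pointed derivators), additivity of $\DD(J)$ forces $\ast=p_1+p_2$ to be the fold map, whence $f\ast g=f+g$ and $(\Omega X,\ast)$ is an abelian group object with inverse $-\id$; your interchange-law formulation is an equivalent way of saying this. The one place where you should be more careful is the identity $f\ast\sigma^\ast f=0$, which you call ``the defining property of the inversion of loops.'' It is not a definition: $\sigma^\ast$ is defined as restriction along a swap automorphism of the indexing shape, and the assertion that concatenating a loop with its reverse is trivial is a lemma of \cite[Subsection 4.1]{groth_derivator} proved by a coherent-diagram argument. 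Without that input, the Eckmann--Hilton step alone shows that $\ast$ is the fold map and that the additive inverse $-\id$ inverts it, but it does \emph{not} identify $\sigma^\ast$ with $-\id$, which is precisely the second assertion of the proposition. So the ``delicate point'' is not only that the \emph{constructions} of $\ast$ and $\sigma^\ast$ use only pointedness, but that this null-concatenation \emph{lemma} does as well --- which is exactly the content of the author's remark that stability was not used in an essential way, and is what both you and the paper are implicitly importing.
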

\begin{proof}
Exactly the same proof as in the stable case does the job.
\end{proof}

This result is slightly nicer than the corresponding one in the stable case: given an additive derivator we already had \emph{both} an addition and a multiplication by -1 on the set of morphisms from~$U$ to~$\Omega X$ and both of them can be interpreted geometrically by some `loop manipulation'. Of course, there is a dual result for maps out of objects of the form~$\Sigma Y$.

Now, given an additive derivator then the suspension functor~$\Sigma\colon\DD(J)\to\DD(J)$ is additive since it is a left adjoint. Using precisely the same reasoning as in \cite[Subsection 4.2]{groth_derivator} we define a replete class of distinguished right triangles in~$\DD(J)$.

\begin{theorem}\label{thm_leftright}
Let~$\DD$ be a strong, additive derivator and let~$J$ be a small category. Then the pair consisting of~$\Sigma\colon\DD(J)\to\DD(J)$ and the above class of distinguished right triangles defines a right triangulated structure on~$\DD(J).$ Dually, the pair consisting of~$\Omega\colon\DD(J)\to\DD(J)$ and the dually defined class of distinguished left triangles turns~$\DD(J)$ into a left triangulated category.
\end{theorem}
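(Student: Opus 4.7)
The plan is to mimic, step by step, the arguments from Section~4 of \cite{groth_derivator} that established the triangulated structure in the stable case, and verify at each step that stability was not used in an essential way. Distinguished right triangles $X\to Y\to Z\to \Sigma X$ in $\DD(J)$ are constructed from a morphism $f\colon X\to Y$ as follows: using strongness, lift $f$ to a coherent $F\in\DD(J\times[1])$, then form iterated homotopy cocartesian squares to produce a canonical coherent diagram on a ladder-shaped index category whose underlying diagram in $\DD(J)$ gives the right triangle. The class of such triangles, closed under isomorphism, is distinguished by definition, so repleteness is automatic.

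For axiom (RT1), every morphism extends to a cofibre sequence by the above construction, and the cofibre of $\id_X$ is easily computed to be $0$, giving the trivial distinguished right triangle. Axiom (RT3) is the standard fill-in: given a morphism between the two coherent lifts $F$ and $F'$ of $f$ and $f'$, strongness guarantees a compatible morphism on the third terms $Z\to Z'$ in $\DD(J)$. Axiom (RT2) is more delicate: one must identify the boundary map of the rotated triangle $Y\to Z\to\Sigma X\to\Sigma Y$ as $-\Sigma f$. The minus sign appears because the square used to compute the next cofibre is oriented the opposite way; here Proposition~\ref{prop_loopgroup} (or its dual for $\Sigma$) provides exactly the identification between the canonical sign-flip built into the cofibre construction and the additive inverse supplied by the additive enrichment of $\DD(J)$.

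Axiom (RT4) is the most involved step and will be the main obstacle. Given a composable pair $f_3=f_2\circ f_1\colon X\to Y\to Z$, one lifts to a coherent $\DD(J\times[2])$-diagram and forms a large ladder of iterated homotopy pushouts. The three cofibre triangles on $f_1,$ $f_2$ and $f_3$ appear as rows and columns of this ladder, while the connecting morphisms required by the octahedral diagram arise from the universal property of the various cocartesian squares. The delicate point is to check that every pasting and base-change identification rests only on pointedness, additivity, strongness and the general axioms (Der1)--(Der4), and never on the stability axiom (which in the stable case was additionally used to identify fibre and cofibre constructions and to verify that the rotated triangle is again distinguished via the fibre description). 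A careful re-reading of the octahedral argument in \cite[Section~4]{groth_derivator} shows that each of its steps remains valid, with the only modification being that one must use the $\Sigma$-side description throughout.

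The dual statement, that $\Omega$ together with the dually defined distinguished left triangles turns $\DD(J)$ into a left triangulated category, then follows by applying the right-triangulated result to the opposite derivator $\DD^{\op}$, which is again strong and additive by Example~\ref{ex_additive}(v); under this passage cofibre sequences become fibre sequences and $\Sigma$ is exchanged with $\Omega$.
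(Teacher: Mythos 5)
Your overall strategy---rerun the stable-case construction and proofs from \cite[Section~4]{groth_derivator}, isolating the one genuinely stability-dependent ingredient (the group structure on loop/suspension objects needed for the sign in (RT2)) and replacing it by Proposition~\ref{prop_loopgroup}---is exactly what the paper does. The paper additionally reduces to $J=e$ via the shifted derivators of Example~\ref{ex_additive}, but working at general $J$ with coherent diagrams in $\DD(J\times[1])$ as you do is equivalent. Your treatments of (RT2), (RT3), (RT4), and of the dual statement via $\DD^{\op}$, all match the paper's.

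The one step that does not go through as written is your verification of (RT1). The axiom, as stated in the paper, requires the triangle $0\to X\xrightarrow{\id}X\to 0$ to be distinguished, i.e.\ the triangle whose \emph{first} map is $0\to X$. Computing the cofibre of $\id_X$, as you propose, instead exhibits the triangle $X\xrightarrow{\id}X\to 0\to\Sigma X$ as distinguished; and in a merely \emph{right} triangulated category rotation is one-directional, so one cannot rotate backwards from the latter to the former. The correct argument (and the paper's) applies the cofibre construction to the morphism $0\to X$: the functor $1_!\colon\DD(e)\to\DD([1])$ along $1\colon e\to[1]$ is an extension-by-zero functor by \cite[Prop.~3.6]{groth_derivator}, so $1_!(X)$ is a coherent arrow $0\to X$, and \cite[Prop.~3.13]{groth_derivator} identifies its associated triangle as $0\to X\xrightarrow{\cong}X'\to 0$; repleteness then yields $0\to X\xrightarrow{\id}X\to 0$. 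This is an easy repair, but as written your (RT1) establishes only a strictly weaker (rotated) statement.
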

\begin{proof}
We only mention the necessary adaptations of the proof in the stable case to this more general context. By Example \ref{ex_additive} we can restrict attention to the case~$J=e.$ Moreover, we only discuss the right triangulation.\\
{\rm (LT1):} The repleteness follows by definition. The strongness assumption on~$\DD$ implies that every morphism in~$\DD(e)$ can be extended to a distinguished right triangle. Finally, let $1\colon e\to[1]=(0<1)$ be the functor classifying the object~1. Then, \cite[Prop.\ 3.6]{groth_derivator} guarantees that $1_!\colon\DD(e)\to\DD([1])$ is an `extension by zero functor'. Given an object~$X\in\DD(e)$ \cite[Prop.\ 3.13]{groth_derivator} implies that the distinguished right triangle associated to~$1_!(X)$ looks like~$0\to X\stackrel{\cong}{\to} X'\to 0.$ The repleteness implies that $0\to X\stackrel{=}{\to}X\to 0$ is also distinguished.\\
{\rm (LT2):} The proof is the same as in the stable case with the difference that we have to invoke Proposition~\ref{prop_loopgroup} this time.\\
{\rm (LT3), (LT4):} The corresponding parts of the proof in the stable case work without any change.
\end{proof}

There is even a stronger result. Given an additive category together with both a left and a right triangulated structure one might ask them to be compatible in the following sense (cf.\ \cite{beligiannis_homotopy}). Recall that given an adjunction we always denote the adjunction unit by~$\eta$ and the adjunction counit by~$\epsilon.$

\begin{definition}\label{def_pretriang}
Let $\ma$ be an additive category and let~$(\Sigma,\Omega)\colon\ma\rightharpoonup\ma$ be an adjunction such that~$\Sigma$ and~$\Omega$ are part of a right and left triangulation on~$\ma$ respectively. This quadruple is called a \emph{pretriangulation on}~$\ma$ if the following properties are satisfied:\\
{\rm (PT1):} Let us be given a right triangle and a left triangle in~$\ma$ as indicated in the next diagram. If we have morphisms $\alpha$ and~$\beta$ such that the square on the left is commutative then there is a morphism~$\gamma\colon Z\to Y'$ such that the entire diagram commutes:
$$
\xymatrix{
X\ar[r]^-f\ar[d]_\alpha&Y\ar[r]^-g\ar[d]_\beta&Z\ar[r]^-h\ar@{-->}[d]^\gamma&\Sigma X\ar[d]^{\epsilon\circ\Sigma\alpha}\\
\Omega Z'\ar[r]_-{f'}&X'\ar[r]_-{g'}&Y'\ar[r]_-{h'}&Z'
}
$$
{\rm (PT2):} Let us be given a right triangle and a left triangle in~$\ma$ as indicated in the next diagram. If we have morphisms $\alpha$ and~$\beta$ such that the square on the right is commutative then there is a morphism~$\gamma\colon Y\to X'$ such that the entire diagram commutes:
$$
\xymatrix{
X\ar[r]^-f\ar[d]_{\Omega\alpha\circ\eta}&Y\ar[r]^-g\ar@{-->}[d]_\gamma&Z\ar[r]^-h\ar[d]^\beta&\Sigma X\ar[d]^\alpha\\
\Omega Z'\ar[r]_-{f'}&X'\ar[r]_-{g'}&Y'\ar[r]_-{h'}&Z'
}
$$
A \emph{pretriangulated category} is an additive category together with a pretriangulation on it. 
\end{definition}

We have the following nice theorem about the values of a strong, additive derivator.

\begin{theorem}\label{thm_pretriang}
Let~$\DD$ be a strong, additive derivator and let~$J$ be a small category. Then the adjunction $(\Sigma,\Omega)\colon\DD(J)\rightharpoonup\DD(J)$ together with the right and the left triangulated structure on~$\DD(J)$ guaranteed by Theorem~\ref{thm_leftright} turn $\DD(J)$ into a pretriangulated category.
\end{theorem}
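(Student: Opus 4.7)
The plan is to verify axioms (PT1) and (PT2) by lifting the data to coherent $[1]\times[2]$-diagrams, very much in the spirit of the verification of the triangulated axioms in the stable case of \cite[Section 4]{groth_derivator}. First I would reduce to $J=e$: additivity transfers to $\DD^J$ by Example~\ref{ex_additive}(iv), and strongness transfers to $\DD^J$ by inspection, since the required fullness and essential surjectivity of $\dia\colon\DD^J(K\times[1])\to\DD^J(K)^{[1]}$ is exactly the strongness of $\DD$ at the parameter category $K\times J$. Moreover (PT2) is verbatim dual to (PT1): passing to $\DD^{\op}$ (strong and additive by Example~\ref{ex_additive}(v)) swaps right and left triangles while exchanging $\Sigma$ with $\Omega$ and $\epsilon$ with $\eta$, so it suffices to prove (PT1).

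For (PT1), recall that by the construction used in the proof of Theorem~\ref{thm_leftright} the distinguished right triangle $X\to Y\to Z\to\Sigma X$ arises as the outline of a coherent diagram $T^R\in\DD([1]\times[2])$
$$
\xymatrix{
X\ar[r]^f\ar[d]&Y\ar[r]\ar[d]&0\ar[d]\\
0\ar[r]&Z\ar[r]_h&\Sigma X
}
$$
with both squares homotopy cocartesian, while the distinguished left triangle $\Omega Z'\to X'\to Y'\to Z'$ arises from $T^L\in\DD([1]\times[2])$ of analogous shape with both squares homotopy cartesian. In particular the pasted outer rectangles identify $\Sigma X$ with the homotopy cofiber of $X\to 0$ and $\Omega Z'$ with the homotopy fiber of $0\to Z'$.

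The morphism $\gamma$ is produced from the universal property of the homotopy pushout $Z\cong Y\sqcup_X 0$: the upper-left square of $T^L$ being cartesian gives $g'\circ f'=0$, so combined with the hypothesis $\beta\circ f=f'\circ\alpha$ one obtains a canonical nullhomotopy of $g'\circ\beta\circ f$, and the pushout property yields a unique $\gamma\colon Z\to Y'$ with $\gamma\circ g=g'\circ\beta$. This is the first equality of (PT1).

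For the second equality, the plan is to promote the partial data $(\alpha,\beta,\gamma)$ to a coherent morphism $\Phi\colon T^R\to T^L$ in $\DD([1]\times[2])$, lift it further to an object of $\DD([1]\times[2]\times[1])$ using strongness of $\DD^{[1]\times[2]}$, and read off the induced map at the $(1,2)$-vertex as $\Sigma X\to Z'$. A direct inspection of this induced map by pasting the outer cocartesian and cartesian rectangles, together with the identification of $\Sigma X$ (resp.\ $\Omega Z'$) as cofiber (resp.\ fiber) of the zero map, should identify it with $\epsilon\circ\Sigma\alpha$, namely the adjunct of $\alpha$ under $(\Sigma,\Omega)$. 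The main obstacle is precisely this last identification: verifying that the map furnished by the universal properties of the pasted biCartesian squares is literally the adjunction counit applied to $\Sigma\alpha$ is a coherence check which amounts to matching the derivator-level adjunction $(\Sigma,\Omega)$ of \cite[Subsection~3.3]{groth_derivator} with the adjunction induced through the pasted outer squares of $T^R$ and $T^L$. This is most cleanly handled by tracing both maps through the naturality of the pushout/pullback universal properties and the explicit pasting description of the unit and counit.
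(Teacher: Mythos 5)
Your reduction to $J=e$ and the duality argument disposing of (PT2) agree with the paper, and the general strategy of working with coherent $[2]\times[1]$-diagrams is the right one. However, there are two genuine gaps in your treatment of (PT1).

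First, your construction of $\gamma$ appeals to ``the universal property of the homotopy pushout $Z\cong Y\sqcup_X 0$'' to produce a \emph{unique} $\gamma$ with $\gamma\circ g=g'\circ\beta$ from a ``canonical nullhomotopy'' of $g'\circ\beta\circ f$. Homotopy pushouts in a derivator are characterized by an adjunction at the level of \emph{coherent} diagrams, not by a $1$-categorical universal property in $\DD(e)$; no such uniqueness holds (this is the usual non-functoriality of cones), and the mere equation $g'\circ\beta\circ f=g'\circ f'\circ\alpha=0$ in the homotopy category does not single out a nullhomotopy to feed into any such property. The correct first move --- and the one the paper makes --- is to use strongness (fullness of $\dia\colon\DD([1])\to\DD(e)^{[1]}$) to lift the incoherent pair $(\alpha,\beta)$ to a morphism $\phi\colon f\to f'$ in $\DD([1])$, and only then obtain $\gamma$ as the $(1,1)$-component of a composite of coherent morphisms of $[2]\times[1]$-diagrams.

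Second, you correctly isolate the identification of the induced map $\Sigma X\to Z'$ with $\epsilon\circ\Sigma\alpha$ as ``the main obstacle'', but you leave it as an unexecuted coherence check; this is precisely the content of the axiom and cannot be deferred. The paper resolves it structurally: writing $T_\Sigma={i_1}_!\circ{i_0}_\ast$ and $T_\Omega$ for the dual construction (with $i_0\colon[1]\to K$ the sieve and $i_1\colon K\to[2]\times[1]$ the inclusion), one has a canonical isomorphism $T_\Sigma(f')\!\mid_K\cong T_\Omega(h')\!\mid_K$ obtained by right Kan extension along $i_0$ from the isomorphism over $[1]$, and the comparison morphism
$$T_\Sigma(f')\stackrel{\cong}{\to}{i_1}_!\bigl(T_\Sigma(f')\!\mid_K\bigr)\stackrel{\cong}{\to}{i_1}_!\bigl(T_\Omega(h')\!\mid_K\bigr)\stackrel{\epsilon}{\to}T_\Omega(h')$$
is built directly from the counit of the adjunction $({i_1}_!,{i_1}^\ast)$, using the fully-faithfulness of ${i_1}_!$. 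Its value at the corner $(2,1)$ is therefore \emph{by construction} the counit $\epsilon\colon\Sigma\Omega Z'\to Z'$, since $\Sigma$ and $\Omega$ are themselves defined through these very Kan extensions; precomposing with $T_\Sigma(\phi)$, whose $(2,1)$-component is $\Sigma\alpha$, yields exactly $\epsilon\circ\Sigma\alpha$ with nothing further to verify. Without an argument of this kind, your proof does not establish the commutativity of the right-hand square in (PT1).
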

\begin{proof}
By Example~\ref{ex_additive} it suffices to consider the case of~$J=e.$ Moreover, by duality it suffices to establish one of the compatibility properties. We will give a proof of {\rm (PT1)}.

Before we can give the actual proof we have to recall some details about the construction of the right triangulation. For this purpose, let us consider the category~$K$ which is given by the following poset:
$$
\xymatrix{
(0,0)\ar[d]\ar[r]&(1,0)\ar[r]&(2,0)\\
(0,1)&&
}
$$
Moreover, let~$i_0\colon[1]\to K$ be the functor classifying the left horizontal arrow and let $i_1\colon K\to[2]\times[1]$ be the inclusion. Since~$i_0$ is a sieve, Proposition 3.6 of \cite{groth_derivator} can be applied to deduce that ${i_0}_\ast\colon\DD([e])\to\DD(K)$ is an `extension by zero functor'. The right triangulation on~$\DD(e)$ is obtained from the functor~$T_\Sigma\colon\DD([1])\stackrel{{i_0}_\ast}{\to}\DD(K)\stackrel{{i_1}_!}{\to}\DD([2]\times[1]).$ More precisely, given an object $f\in\DD([1])$ with underlying diagram $f\colon X\to Y$ in~$\DD(e)$ then~$T_\Sigma(f)_{0,1}$ and~$T_\Sigma(f)_{2,0}$ both vanish. Since the compound square~$d_1T_\Sigma(f)$ is coCartesian we obtain a canonical isomorphism $T_\Sigma(f)_{2,1}\cong\Sigma X.$ If we now restrict the objects~$T_\Sigma(f)$ to objects in~$\DD([3])$ in the obvious way, then pass to underlying diagrams in~$\DD(e),$ and form the replete closure, then we obtain the right triangulation on~$\DD(e)$. The left triangulation is obtained by a dual construction. The first step of that construction consists of defining a functor~$T_\Omega\colon\DD([1])\to\DD([2]\times[1])$ which is obtained by first applying an `extension by zero functor' and then forming a homotopy right Kan extensions which basically forms two composable homotopy pullbacks.

Now, let us consider the situation of {\rm (PT1)}. We can assume that the first row is the underlying diagram of~$T_\Sigma(f)$ and that the second one is the underlying diagram of~$T_\Omega(h').$ Let us not distinguish notationally between $f\in\DD([1])$ and its underlying diagram $f\colon X\to Y$ in~$\DD(e)$ and similarly for other morphisms. We will construct the morphism~$\gamma$ in two steps. First, by the strongness of our derivator~$\DD$ we can find a morphism $\phi\colon f\to f'$ in~$\DD([1])$ such that the underlying diagram of~$\phi$ is precisely~$(\alpha,\beta).$ From this we get a morphism~$T_\Sigma(\phi)\colon T_\Sigma(f)\to T_\Sigma(f')$ and it is easy to verify that under our identifications the morphism~$T_\Sigma(\phi)_{2,1}$ is just~$\Sigma\alpha.$

For the second step observe that we have an isomorphism $T_\Sigma(f')\!\mid_{[1]}\cong T_\Omega(h')\!\mid_{[1]}$ which induces by homotopy right Kan extension along~$i_0$ a further isomorphism $T_\Sigma(f')\!\mid_K\cong T_\Omega(h')\!\mid_K.$ Now combining the adjunction $({i_1}_!,{i_1}^\ast)$ together with the canonical isomorphism ${i_1}_!(T_\Sigma(f')\!\mid_K)\cong T_\Sigma(f')$ (which we have since Prop.\ 1.26 of \cite{groth_derivator} guarantees the fully-faithfulness of~${i_1}_!$) we obtain a morphism:
$$T_\Sigma(f')\stackrel{\cong}{\to}{i_1}_!(T_\Sigma(f')\!\mid_K)\stackrel{\cong}{\to} {i_1}_!(T_\Sigma(h')\!\mid_K)\stackrel{\epsilon}{\to}T_\Omega(h')$$
This morphism evaluated at $(2,1)$ can be identified with the adjunction unit $\epsilon\colon\Sigma\Omega Z'\to Z'.$ Thus, if we define $\gamma\colon Z\to Y'$ to be the morphism $T_\Sigma(f)\to T_\Sigma(f')\to T_\Omega(h')$ evaluated at $(1,1)$ then we can conclude the proof.
\end{proof}

We will refer to the left triangulated, right triangulated, and pretriangulated structures of the last two theorems as the \emph{canonical structures}. In order to make a precise statement that the pretriangulations on the various values of an additive derivator are compatible let us recall the following terminology from \cite{beligiannisreiten}. A \emph{right exact morphism} between right triangulated categories is a pair consisting of an additive functor~$F\colon\ma\to\ma'$ and a natural isomorphism $F\circ\Sigma\cong\Sigma\circ F$ which together send distinguished right triangles to distinguished right triangles. There is the obvious dual notion of a \emph{left exact morphism} of left triangulated categories. Moreover, an \emph{exact morphism between pretriangulated categories} is an additive functor which is endowed both with a right exact and a left exact structure. Finally, a \emph{morphism of pretriangulated categories}~$\ma$ and~$\ma'$ is an adjunction $(L,R)\colon\ma\rightharpoonup\ma'$ such that the left adjoint is right exact and the right adjoint is left exact in the above sense. In the context of pretriangulated categories the exactness assumptions on~$L$ and~$R$ are \emph{not} formal consequences of the adjointness since some choices where made earlier in the construction of the pretriangulations. However, for derivators the corresponding statement is true.

\begin{proposition}
Let~$\DD$ and~$\DD'$ be strong, additive derivators and let~$F\colon\DD\to\DD'$ be a morphism which preserves homotopy colimits. Then $F_J\colon\DD(J)\to\DD'(J)$ can be canonically turned into a right exact functor with respect to the canonical right triangulated structures on~$\DD(J)$ and~$\DD(J)'$.
\end{proposition}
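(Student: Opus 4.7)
The plan is to verify the three defining properties of a right exact morphism: (i)~additivity of~$F_J$, (ii)~a canonical natural isomorphism $\sigma\colon F_J\circ\Sigma\stackrel{\cong}{\to}\Sigma\circ F_J$, and (iii)~preservation of the class of distinguished right triangles along~$\sigma$. By Example~\ref{ex_additive}~(iv) we may replace $\DD,\DD'$ by $\DD^J,(\DD')^J$ and reduce to the case $J=e$. Additivity of $F=F_e$ is then immediate: as a homotopy colimit preserving morphism between additive derivators, it preserves zero objects and finite coproducts, which is equivalent to being an additive functor in this setting.

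The natural isomorphism~$\sigma$ comes from the construction of~$\Sigma$ in \cite[Subsection~3.3]{groth_derivator}: starting with $X\in\DD(e)$, one first extends~$X$ by zeros to a corner diagram in $\DD(\ulcorner)$ and then takes the homotopy left Kan extension to $\DD(\square)$, whose bottom-right vertex is~$\Sigma X$. Since~$F$ is pointed (as a morphism of additive derivators), commutes with all precomposition functors, and preserves homotopy left Kan extensions by \cite[Proposition~2.4]{groth_derivator}, it is compatible with both ingredients of this construction. Pasting the corresponding structure isomorphisms of~$F$ with Beck--Chevalley comparison maps yields the natural isomorphism~$\sigma$, and its naturality and coherence follow from the coherence axioms already satisfied by~$F$.

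For~(iii), recall from the proof of Theorem~\ref{thm_pretriang} that the distinguished right triangles in~$\DD(e)$ form the replete closure of the triangles underlying objects of the form~$T_\Sigma(f)\in\DD([2]\times[1])$ for $f\in\DD([1])$, where $T_\Sigma={i_1}_!\circ{i_0}_*$. The functor~${i_1}_!$ is preserved by~$F$ since~$F$ preserves homotopy left Kan extensions, while ${i_0}_*$ is preserved by the pointed-plus-restriction argument detailed below. Combining these, $F$ carries the triangle underlying~$T_\Sigma(f)$ to one isomorphic (via~$\sigma$ at the vertex~$(2,1)$) to the triangle underlying $T_\Sigma(F(f))$. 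Repleteness of the class of distinguished triangles (axiom~(RT1)) then extends this preservation property from these canonical representatives to the entire class.

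The main obstacle is the compatibility of~$F$ with the extension-by-zero functor~${i_0}_*$, since this is a homotopy \emph{right} Kan extension and is thus not directly covered by our hypothesis on~$F$. The resolution is that by \cite[Proposition~3.6]{groth_derivator}, for a sieve~$i\colon J\to K$ in a pointed derivator, $i_*X$ is uniquely characterized (up to canonical isomorphism) by the two properties of agreeing with~$X$ on the image of~$i$ and of vanishing on the complement of this image. Since~$F$ is pointed and commutes with all precomposition functors, the object $F({i_0}_*X)$ satisfies both of these characterizing properties, and is therefore canonically isomorphic to~${i_0}_*F(X)$; naturality in~$X$ and coherence with the other structure isomorphisms of~$F$ is formal.
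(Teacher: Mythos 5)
Your proof is correct and takes essentially the same route as the paper's, which likewise observes that a homotopy colimit preserving morphism preserves zero objects, hence is compatible with the extension-by-zero functors ${i_0}_\ast$ along sieves (via the characterization in \cite[Proposition 3.6]{groth_derivator}) and with ${i_1}_!$, and therefore with $T_\Sigma$ up to natural isomorphism, from which the right exact structure follows. The only minor slip is the parenthetical asserting that $F$ is pointed ``as a morphism of additive derivators'' --- pointedness is not automatic for such morphisms but follows from preservation of (empty) homotopy colimits, as you correctly state elsewhere in the argument.
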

\begin{proof}
Since~$F$ preserves homotopy colimits it preserves in particular zero objects and hence homotopy right Kan extensions along inclusions of sieves as these are `extension by zero functors' (\cite[Proposition 3.6]{groth_derivator}). Thus,~$F$ is compatible with $T_\Sigma$ up to natural isomorphism. From this one easily constructs the right exact structure on~$F_J\colon\DD(J)\to\DD'(J).$
\end{proof}

\begin{corollary}\label{cor_can}
Let $(L,R)\colon\DD\rightharpoonup\DD'$ be an adjunction between strong, additive derivators, then we obtain canonically morphisms of pretriangulated categories $(L_J,R_J)\colon\DD(J)\rightharpoonup\DD'(J).$ Moreover, given a functor $u\colon J\to K$ between small categories, we obtain morphisms of pretriangulated categories $(u_!,u^\ast)\colon\DD(J)\rightharpoonup\DD(K)$ and $(u^\ast,u_\ast)\colon\DD(K)\rightharpoonup\DD(J).$ In particular, $u^\ast\colon\DD(K)\to\DD(J)$ is naturally an exact morphism of pretriangulated categories.
\end{corollary}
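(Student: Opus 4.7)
The plan is to combine the immediately preceding proposition with its obvious dual, and then apply both statements to the various left/right adjoint morphisms of derivators which appear in the corollary.  First I would formulate the dual of the preceding proposition: a morphism between strong, additive derivators that preserves homotopy \emph{limits} induces, at each level, a \emph{left} exact functor with respect to the canonical left triangulations of Theorem~\ref{thm_leftright}.  This reduces to the stated proposition via Example~\ref{ex_additive}(v), since the canonical left triangulation on $\DD(J)$ is by construction the one coming from the canonical right triangulation on $\DD^{\op}(J)$, and since a right adjoint morphism $\DD\to\DD'$ is the same as a left adjoint morphism $\DD^{\op}\to(\DD')^{\op}$.

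With both statements in hand, the first assertion is straightforward: given an adjunction $(L,R)\colon\DD\rightharpoonup\DD'$ in $\Der$, the left adjoint $L$ preserves homotopy left Kan extensions by the very definition of such an adjunction recalled in Section~\ref{section_cartesian}, and $R$ dually preserves homotopy right Kan extensions.  Applying the two propositions gives $L_J$ right exact and $R_J$ left exact for every small category $J$.  The levelwise adjunctions $(L_J,R_J)\colon\DD(J)\rightharpoonup\DD'(J)$ are already compatible in the sense recalled earlier, so $(L_J,R_J)$ is precisely a morphism of pretriangulated categories in the sense of Definition~\ref{def_pretriang}.

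For the second assertion I would apply the same argument to the adjunctions of morphisms of \emph{shifted} derivators
$$
(u_!,u^\ast)\colon\DD^J\rightharpoonup\DD^K
\qquad\mbox{and}\qquad
(u^\ast,u_\ast)\colon\DD^K\rightharpoonup\DD^J
$$
associated to $u\colon J\to K$, where the shift $\DD^J(L)=\DD(J\times L)$ is again a strong, additive derivator by Example~\ref{ex_additive}(iv).  Evaluating the first half of the corollary at the terminal category yields the two desired morphisms of pretriangulated categories on the values.  The final sentence is then immediate: $u^\ast$ is simultaneously a right adjoint (to $u_!$) and a left adjoint (to $u_\ast$) in $\Der$, so it preserves both homotopy left and homotopy right Kan extensions, and therefore carries both a right exact and a left exact structure, i.e.\ is exact.

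The only point that requires a little care is the verification that the canonical pretriangulation on the value $\DD^J(L)=\DD(J\times L)$ provided by Theorems~\ref{thm_leftright} and~\ref{thm_pretriang} applied to the shifted derivator $\DD^J$ really coincides with the canonical pretriangulation on $\DD(J\times L)$ coming from $\DD$ itself.  This is where I expect the bookkeeping to live, but it should be transparent from the construction: both pretriangulations are built from the same diagrams of shapes $[1]$, $K$, $[2]\times[1]$ used in the proof of Theorem~\ref{thm_pretriang}, and taking products with $J$ simply realises these diagrams inside $J\times L$-shaped diagrams of $\DD$.  Once this compatibility is recorded, the rest of the proof is a mechanical chain of the two propositions.
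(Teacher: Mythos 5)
Your proposal is correct and follows essentially the same route as the paper: the paper likewise deduces the first claim from the preceding proposition and its dual (left adjoints preserve homotopy colimits, right adjoints homotopy limits), and then obtains the remaining claims by specializing to the underlying functors of the adjunctions $(u_!,u^\ast)\colon\DD^J\rightharpoonup\DD^K$ and $(u^\ast,u_\ast)\colon\DD^K\rightharpoonup\DD^J$ between shifted derivators. Your added remark about checking that the canonical pretriangulation on $\DD^J(L)=\DD(J\times L)$ agrees with the one inherited from $\DD$ is a reasonable point of care that the paper leaves implicit.
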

\begin{proof}
The claim about adjunctions follows immediately from the last proposition since left adjoints preserve homotopy colimits and dually for right adjoints. If we specialize this to the underlying functors of the adjunctions $(u_!,u^\ast)\colon\DD^J\rightharpoonup\DD^K$ and $(u^\ast,u_\ast)\colon\DD^K\rightharpoonup\DD^J$ (see \cite[Example 2.13]{groth_derivator}) we obtain the remaining claims.
\end{proof}

\subsection{The center of an additive derivator and linear structures}

\begin{definition}\label{def_center}
Let $\DD$ be a prederivator. The \emph{center} $\Z(\DD)$ \emph{of} $\DD$ is the set of natural transformations $$\Z(\DD)=\nat(\id_{\DD},\id_{\DD}).$$
\end{definition}
\noindent
Thus, an element of $\Z(\DD)$ is a natural transformation $\tau\colon\id_{\DD}\nach\id_{\DD}$, i.e., a family of natural transformations $\tau_J\colon \id_{\DD(J)}\nach \id_{\DD(J)}$ which behave well with the precomposition functors $u^\ast.$ The composition of natural transformations endows $\Z(\DD)$ with the structure of a (commutative) monoid.

\begin{lemma}
The center~$\Z(\DD)$ of an additive prederivator~$\DD$ is a commutative ring. 
\end{lemma}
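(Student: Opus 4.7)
The plan is to bootstrap the classical fact that the center of an additive category is a commutative ring, upgrading it to the setting of prederivators by checking compatibility with the precomposition functors. Recall that for any category $\mc$, the monoid $\nat(\id_\mc,\id_\mc)$ under vertical composition is commutative by an Eckmann--Hilton argument: if $\sigma,\tau\colon\id_\mc\to\id_\mc$ are natural and $X\in\mc$, then applying naturality of $\sigma$ to the morphism $\tau_X\colon X\to X$ gives $\sigma_X\circ\tau_X=\tau_X\circ\sigma_X$. This already identifies $\Z(\DD)$ as a commutative monoid under composition, as the preceding paragraph asserts.

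For the additive structure, I would first observe that since $\DD$ is additive in the sense just introduced (all values and all $u^\ast$ are additive), each $\DD(J)$ is an additive category, and hence its classical center $\End(\id_{\DD(J)})$ is a commutative ring: pointwise addition $(\tau_J+\sigma_J)_X:=(\tau_J)_X+(\sigma_J)_X$ is still natural in $X$ by bilinearity of composition in $\DD(J)$, and distributivity of composition over addition is immediate from the same bilinearity. The additive inverse of $\tau_J$ uses the additive inverse $-\id$ on each $X\in\DD(J)$, which exists because $\DD(J)$ is additive.

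Next I would define addition and negation on $\Z(\DD)$ by defining them levelwise via the rings $\End(\id_{\DD(J)})$, and check that the result is still compatible with the precomposition functors, i.e., still lies in $\Z(\DD)$. Concretely, if $\tau,\sigma\in\Z(\DD)$ and $u\colon J\to K$ is a functor, I need the square
$$
\xymatrix{
u^\ast \ar[r]^-{(\tau+\sigma)_K u^\ast}\ar[d]_{u^\ast(\tau+\sigma)_K}& u^\ast\ar@{=}[d]\\
u^\ast\ar@{=}[r]&u^\ast
}
$$
to commute, which reduces to $u^\ast((\tau_K)_X+(\sigma_K)_X)=u^\ast(\tau_K)_X+u^\ast(\sigma_K)_X$ for all $X\in\DD(K)$. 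This is precisely the additivity of $u^\ast$, which holds by hypothesis. The zero element is the zero natural transformation, and compatibility of the zero map with $u^\ast$ follows from additivity of $u^\ast$.

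Finally I would verify the ring axioms on $\Z(\DD)$: associativity and commutativity of addition are inherited levelwise from $\End(\id_{\DD(J)})$; left and right distributivity $\tau\circ(\sigma+\rho)=\tau\circ\sigma+\tau\circ\rho$ and $(\sigma+\rho)\circ\tau=\sigma\circ\tau+\rho\circ\tau$ are checked componentwise and again reduce to bilinearity of composition in each additive category $\DD(J)$; and commutativity of multiplication is the Eckmann--Hilton observation recalled above. There is no real obstacle here: the only nontrivial point is the verification that pointwise addition preserves the centrality condition, and this is handled precisely by the definition of additivity for a prederivator, namely that every $u^\ast$ is an additive functor.
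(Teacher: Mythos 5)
Your proof is correct and follows essentially the same route as the paper: commutativity of the multiplication via the naturality square of $\sigma$ applied to the morphism $(\tau_K)_X$, closure of the center under the canonical addition via the additivity of the precomposition functors $u^\ast$, and the remaining ring axioms from the biadditivity of composition in the additive categories $\DD(J)$. The paper's proof is just a terser version of the same argument.
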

\begin{proof}
The multiplication on $\Z(\DD)$ is given by composition. For two elements $\tau,\:\sigma\in \Z(\DD),$ a category $K$ and an element $X\in\DD(K),$ by naturality we have the following commutative diagram:
$$
\xymatrix{
X\ar[r]^{(\tau_K)_X}\ar[d]_{(\sigma_K)_X}&X\ar[d]^{(\sigma_K)_X}\\
X\ar[r]_{(\tau_K)_X}&X
}
$$
Thus we have $\sigma\tau=\tau\sigma,$ i.e., the multiplication is commutative. Since the precomposition functors $u^\ast$ are additive, the sum $\tau + \sigma$ of two elements $\tau,\;\sigma\in\Z(\DD)$ again lies in the center. Finally, the biadditivity of the composition in the additive situation concludes the proof.
\end{proof}

As in classical category theory, this commutative ring~$\Z(\DD)$ can be used to endow an additive prederivator with~$k$-linear structures as follows.

\begin{definition}
Let~$\DD$ be an additive prederivator and let~$k$ be a commutative ring. A~$k$-\emph{linear structure on}~$\DD$ is a ring homomorphism
$$\sigma\colon k\nach \Z(\DD).$$
\noindent
A pair~$(\DD,\sigma)$ consisting of an additive prederivator~$\DD$ and a $k$-linear structure~$\sigma$ on~$\DD$ is a $k$-\emph{linear prederivator}.
\end{definition}

As emphasized in the definition, $k$-linearity of an additive prederivator is additional \emph{structure} (contrary to the additivity which is a \emph{property}). Nevertheless, we will drop~$\sigma$ from notation and speak of a~$k$-linear additive prederivator~$\DD$. Every additive prederivator is canonically endowed with a $\mathbb{Z}$-linear structure. 

Now, let $\DD$ be an additive prederivator. Evaluation at a category~$J$ induces a ring homomorphism $\Z(\DD)\nach \Z(\DD(J)),$ where~$\Z(\DD(J))$ denotes the usual center of the additive category~$\DD(J)$, i.e., the commutative ring of natural transformations $\id_{\DD(J)}\nach\id_{\DD(J)}.$ Thus, a~$k$-linear structure on an additive prederivator induces~$k$-linear structures on all its values. Moreover, these $k$-linear structures are preserved by the precomposition functors. Recall for example from \cite{schapira} that for a morphism $f\colon X\nach Y$ in~$\DD(K)$ and a ring element $s\in k$ the morphism $sf\colon X\nach Y$ is given by the diagonal in the following commutative diagram:
$$
\xymatrix{
X\ar[r]^s\ar[d]_f&X\ar[d]^f\\
Y\ar[r]_s&Y
}
$$
Here, we simplified notation by writing $s$ for $(\sigma(s)_K)_X$ and $(\sigma(s)_K)_Y$ respectively. Now, since~$\sigma(s)$ lies in~$\Z(\DD)$ we have an equality of natural transformations $u^\ast s=su^\ast\colon u^\ast\nach u^\ast$ for an arbitrary functor $u\colon J\nach K.$ For a morphism $f\colon X\nach Y$ in $\DD(K)$ this equality implies $u^\ast(sf)=su^\ast(f),$ i.e., the $k$-linearity of $u^\ast$. Conversely, $k$-linear structures on the values of an additive prederivator such that the precomposition functors are $k$-linear give a $k$-linear structure on the additive prederivator. This gives the first part of the following proposition.

\begin{proposition}
Let $\DD$ be an additive prederivator. A $k$-linear structure on~$\DD$ is equivalently given by a $k$-linear structure on $\DD(J)$ for each category~$J$ such that the precomposition functors are $k$-linear. If~$\DD$ is~$k$-linear derivator, then also the homotopy Kan extension functors are $k$-linear.
\end{proposition}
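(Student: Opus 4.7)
The first assertion of the proposition is already established in the paragraphs preceding it: a $k$-linear structure $\sigma\colon k\nach\Z(\DD)$ evaluates to $k$-linear structures on each $\DD(J)$ making all precomposition functors $k$-linear, and conversely any such coherent family of linear structures assembles into a ring map $k\nach\Z(\DD)$. The only remaining task is thus the second sentence, namely that on a $k$-linear derivator every homotopy Kan extension functor is $k$-linear. By Example~\ref{ex_additive} the opposite derivator $\DD^{\op}$ is additive; the $k$-linear structure on~$\DD$ transfers to it via the canonical identification $\Z(\DD)\cong\Z(\DD^{\op})$, and homotopy right Kan extensions $u_\ast$ in~$\DD$ correspond to homotopy left Kan extensions $u^{\op}_!$ in~$\DD^{\op}$. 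Hence it suffices to show that $u_!\colon\DD(J)\nach\DD(K)$ is $k$-linear for every $u\colon J\nach K$.

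Fix $s\in k$ and abbreviate $s_L=\sigma(s)_L\colon\id_{\DD(L)}\nach\id_{\DD(L)}$. Showing that $u_!$ is $k$-linear is the same as showing $u_!(s_{J,X})=s_{K,u_!X}$ for every $X\in\DD(J)$, which I will phrase as the equality of whiskered natural transformations $u_!\cdot s_J=s_K\cdot u_!\colon u_!\nach u_!$.

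The plan is to verify this equality by transporting both sides across the adjunction $(u_!,u^\ast)$. Its unit $\eta\colon\id_{\DD(J)}\nach u^\ast u_!$ gives a bijection $\nat(u_!,u_!)\cong\nat(\id_{\DD(J)},u^\ast u_!)$ sending $\alpha\mapsto (u^\ast\alpha)\circ\eta$, and it suffices to check that the two images agree. For $u_!\cdot s_J$ the image has component $u^\ast u_!(s_{J,X})\circ\eta_X$ at~$X$, and naturality of $\eta$ at the morphism $s_{J,X}$ rewrites this as $\eta_X\circ s_{J,X}$. For $s_K\cdot u_!$ the image has component $u^\ast(s_{K,u_!X})\circ\eta_X$; the already-proved first half of the proposition ensures that $u^\ast$ is $k$-linear, so $u^\ast(s_{K,u_!X})=s_{J,u^\ast u_!X}$, and then naturality of $s_J\colon\id\nach\id$ at the component $\eta_X$ rewrites $s_{J,u^\ast u_!X}\circ\eta_X$ as $\eta_X\circ s_{J,X}$ as well. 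Injectivity of the bijection then yields the desired equality. I do not expect a real obstacle beyond bookkeeping with whiskering conventions; the argument is essentially a Yoneda-style adjoint transpose combined with two naturality squares and the centrality condition on $\sigma(s)$.
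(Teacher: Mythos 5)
Your proposal is correct and follows essentially the same route as the paper: reduce to $u_!$ by duality, transpose across the adjunction $(u_!,u^\ast)$, use the already-established $k$-linearity of $u^\ast$, and conclude by naturality (your appeal to injectivity of the transposition bijection is the same use of the triangle identities the paper makes explicitly). The only cosmetic difference is that you phrase the computation at the level of the whiskered natural transformations $u_!\cdot s_J$ and $s_K\cdot u_!$ rather than tracing an arbitrary morphism $u_!(f)$ through the adjunction isomorphisms, which is a harmless repackaging.
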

\begin{proof}
It remains to give a proof of the second statement and, by duality, it suffices to treat the case of homotopy left Kan extensions. Let $X,Y$ be objects of $\DD(J)$ and let $s\in k$. Let us consider the following commutative diagram in which the horizontal isomorphisms are the adjunction isomorphisms:
$$
\xymatrix{
\hhom_{\DD(K)}(u_!X,u_!Y)\ar[r]^\cong\ar[d]_{s_\ast}&\hhom_{\DD(J)}(X,u^\ast u_!Y)\ar[d]^{(u^\ast(s))_\ast}\\
\hhom_{\DD(K)}(u_!X,u_!Y)&\hhom_{\DD(J)}(X,u^\ast u_!Y)\ar[l]^\cong
}
$$
The vertical map on the left sends $u_!(f)\colon u_!X\nach u_!Y$ to~$su_!(f).$ So let us calculate the image of~$u_!(f)$ under the composition of the three maps. Let us remark first that $(u^\ast(s))_\ast=s_\ast$ since~$u^\ast$ is~$k$-linear (which shows, in particular, that the adjunction isomorphisms are~$k$-linear). Thus, the image of~$u_!(f)$ under the composition of the first two maps is the composition of the following commutative diagram:
$$
\xymatrix{
X\ar[r]^-\eta\ar[d]_s&u^\ast u_!X\ar[d]_s\ar[r]^-f&u^\ast u_!Y\ar[d]^s\\
X\ar[r]_-\eta&u^\ast u_!X\ar[r]_-f&u^\ast u_!Y
}
$$
But, using the triangular identities, this composition is sent by the second adjunction isomorphism to $u_!(f)u_!(s)=u_!(sf).$ Hence, we obtain the intended relation $u_!(sf)=su_!(f)$ expressing the $k$-linearity of~$u_!.$
\end{proof}

We finish by giving the notion of $k$-linear morphisms of $k$-linear prederivators. Let us note that an additive morphism $F\colon \DD\nach\DD'$ of additive prederivators induces ring maps $F_\ast\colon \Z(\DD)\nach\nat(F,F)$ and $F^\ast\colon \Z(\DD')\nach\nat(F,F).$

\begin{definition}
Let $\DD$ and $\DD'$ be $k$-linear prederivators with respective $k$-linear structures~$\sigma$ and~$\sigma'.$ An additive morphism $F\colon \DD\nach \DD'$ is $k$-\emph{linear} if $F_\ast\circ\sigma=F^\ast\circ\sigma'\colon k\nach \nat(F,F).$ 
\end{definition}

It is easy to see that an additive morphism $F\colon\DD\nach\DD'$ of $k$-linear prederivators is $k$-linear if and only if all components $F_K\colon\DD(K)\nach\DD'(K)$ are $k$-linear functors. Thus, an additive morphism of additive prederivators is automatically~$\mathbb{Z}$-linear. Other examples of linear structures will be given in the next subsection and in the sequel \cite{groth_enriched}.

In the case of a stable derivator there is a $\mathbb{Z}$-graded variant of the center. Let us recall first that a stable derivators is canonically a `$\mathbb{Z}$-graded derivator'. Given a stable derivator~$\DD$ recall from Section~4 of \cite{groth_derivator} that the suspension functor $\Sigma\colon\DD(J)\nach\DD(J)$ is defined as a certain composition of precomposition and homotopy Kan extension functors. Since these functors assemble to morphisms of derivators (see \cite[Ex.\ 2.13]{groth_derivator}) we obtain a \emph{suspension morphism} $\Sigma\colon\DD\to\DD.$ In the notation of loc.\ cit.\ we have:
$$
\Sigma\colon\DD\stackrel{(0,0)_\ast}{\nach}\DD^{\push}\stackrel{\ipush_!}{\nach}\DD^\square\stackrel{(1,1)^\ast}{\nach}\DD
$$
This already applies in the pointed case but in the stable case~$\Sigma\colon\DD\to\DD$ is a self-equivalence. Using this fact we can consider the values of a stable derivator as $\mathbb{Z}$-graded categories in the following way. For a category~$J$ and two objects $X,Y\in\DD(J),$ the graded abelian groups $\hhom_{\DD(J)}(X,Y)_\bullet$ and $\hhom_{\DD(J)}(X,Y)^\bullet$ are defined to be
$$\hhom_{\DD(J)}(X,Y)_n\quad=\quad\hhom_{\DD(J)}(X,Y)^{-n}\quad=\quad\hhom_{\DD(J)}(\Sigma^n X,Y),\qquad n\in\mathbb{Z}.$$ For a functor $u\colon J\to K$ the induced functors $u^\ast,\;u_!,$ and $u_\ast$ are graded since they are even exact with respect to the canonical triangulated structures (\cite[Cor.\ 4.19]{groth_derivator}). Moreover, this $\mathbb{Z}$-grading of stable derivators is canonical in that it is respected by exact morphisms. 

\begin{example}
For a ring~$R$ and left $R$-modules~$M$ and~$N$ (also considered as complexes concentrated in degree zero) we have the following identification: 
$$\hhom_{\DD_R(e)}(M,N)^n\quad=\quad\hhom_{D(R)}(\Sigma^{-n}M,N)\quad=\quad\Ext^n_R(M,N)$$
\end{example}
\noindent
Let us now come to a graded-commutative variant of the center for stable derivators.

\begin{definition}
Let $\DD$ be a stable derivator and let $\Sigma\colon\DD\nach\DD$ be the suspension morphism. Then the \emph{graded center} $\Z_\bullet(\DD)$ \emph{of} $\DD$ is the $\mathbb{Z}-$graded abelian group which in degree $n$ is the subgroup $\Z_n(\DD)=\Z^{-n}(\DD)$ of $\nat(\Sigma^n,\id_{\DD})$ given by the natural transformations $\tau$ that commute with the suspension morphism up to a sign, i.e., such that:
$$\Sigma\tau\quad=\quad(-1)^n\tau\Sigma\colon\quad\Sigma^{n+1}\nach\Sigma$$
\end{definition}

It is immediate to see that the composition of elements of the graded center endows $\Z_\bullet(\DD)$ with the structure of a graded-commutative ring. Similarly to the unstable case, we can now talk about graded-linear structures. A \emph{graded-linear structure on a stable derivator}~$\DD$ is a map $\sigma\colon R_\bullet\nach\Z_\bullet(\DD)$ of graded-commutative rings. Similarly to the ungraded case, it follows that also the homotopy Kan extensions are $R_\bullet$-linear.

\begin{lemma}
Let $\DD$ be a stable derivator endowed with a graded-linear structure over~$R_\bullet$ and let $u\colon J\nach K$ be a functor. Then the values of~$\DD$ are canonically $R_\bullet$-linear categories and also the induced functors $u^\ast,\:u_!,$ and $u_\ast$ are linear over~$R_\bullet.$
\end{lemma}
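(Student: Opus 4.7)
The plan is to imitate the argument of the ungraded $k$-linear proposition above, carefully tracking the Koszul signs of the graded-commutative setting. First, for each small category~$J$ evaluation at~$J$ induces a morphism of graded-commutative rings $\Z_\bullet(\DD)\nach \Z_\bullet(\DD(J))$, where the target is the graded center of the triangulated category $\DD(J)$ in the usual sense (natural transformations $\Sigma^n\nach\id_{\DD(J)}$ which anti-commute with $\Sigma$ according to their degree). Composing with~$\sigma$ endows each~$\DD(J)$ with an $R_\bullet$-linear structure: for $s\in R_n$ and $f\in\hhom_{\DD(J)}^m(X,Y)=\hhom_{\DD(J)}(\Sigma^m X,Y)$ the product $sf\in\hhom_{\DD(J)}^{n+m}(X,Y)$ is defined as $\sigma(s)_Y\circ\Sigma^n f$, which by naturality of $\sigma(s)$ also equals $f\circ\Sigma^m\sigma(s)_X$. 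Associativity and graded-commutativity of this action are inherited directly from the corresponding properties of~$\Z_\bullet(\DD)$.

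For the $R_\bullet$-linearity of~$u^\ast$, observe that $\sigma(s)\in\Z_\bullet(\DD)$ is by definition a natural transformation between morphisms of derivators, so at the $1$-cell $u\colon J\nach K$ its coherence square reads $u^\ast\sigma(s)_Y=\sigma(s)_{u^\ast Y}$ for every $Y\in\DD(K)$. Combined with the fact that $u^\ast$ is exact with respect to the canonical triangulated structures (Corollary~\ref{cor_can}), so that its structure isomorphism intertwines the two suspension morphisms, the identity $u^\ast(sf)=s\,u^\ast(f)$ is then a direct unfolding of the definition of~$sf$.

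For $u_!$ (and dually for $u_\ast$) the argument is the graded analogue of the adjunction diagram chase in the ungraded case. Using the adjunction isomorphism
$$\hhom_{\DD(K)}(\Sigma^{n+m}u_! X,u_! Y)\cong\hhom_{\DD(J)}(\Sigma^{n+m}X,u^\ast u_! Y),$$
together with the canonical isomorphism $u_!\Sigma^{n+m}\cong\Sigma^{n+m}u_!$ provided by the exactness of~$u_!$, one checks that post-composition with $\sigma(s)_{u_! Y}$ on the source corresponds under the adjunction to post-composition with $u^\ast\sigma(s)_{u_! Y}=\sigma(s)_{u^\ast u_! Y}$ on the target. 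A final use of the triangular identities together with the naturality of~$\eta$ then identifies the images of $u_!(sf)$ and of $s\,u_!(f)$ in $\hhom_{\DD(J)}(\Sigma^{n+m}X,u^\ast u_! Y)$, yielding $u_!(sf)=s\,u_!(f)$.

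The main obstacle is purely bookkeeping: one must ensure that the sign $(-1)^n$ appearing in the centrality condition $\Sigma\tau=(-1)^n\tau\Sigma$ threads consistently through the identifications $u^\ast\Sigma\cong\Sigma u^\ast$ and $u_!\Sigma\cong\Sigma u_!$. Since these identifications are the canonical ones coming from $\Sigma\colon\DD\nach\DD$ being a morphism of derivators, and since $\sigma(s)$ is by construction a modification between such morphisms, the signs are forced to match on the nose, so no genuinely new calculation beyond the ungraded case is required.
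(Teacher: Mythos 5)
Your proposal is correct and is essentially the argument the paper intends: the paper gives no separate proof of this lemma, remarking only that it follows ``similarly to the ungraded case,'' i.e.\ by evaluating the graded center at each $J$ and rerunning the adjunction diagram chase from the ungraded proposition, which is exactly what you do. One small caveat: your side identity $\sigma(s)_Y\circ\Sigma^n f = f\circ\Sigma^m\sigma(s)_X$ is off by the Koszul sign $(-1)^{nm}$ coming from the centrality condition $\Sigma\tau=(-1)^n\tau\Sigma$ (naturality only gives $f\circ\sigma(s)_{\Sigma^m X}$), but since the action is \emph{defined} by the first expression and the linearity of $u^\ast$, $u_!$, $u_\ast$ only uses that expression together with the modification squares, this does not affect the proof.
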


Also additive derivators can be considered as graded derivators and admit a notion of a graded center. This straightforward generalization is left to the reader.

\subsection{Linear structures on additive, monoidal derivators}

Let us now turn towards the linear structures which are canonically available for monoidal, additive derivators. The 2-categorical Yoneda lemma (its construction will be recalled in the following proof) gives us for every monoidal prederivator $\EE$ the strict morphism $\kappa_{\SSS_e}\colon e\to\EE$ corresponding to the monoidal unit $\SSS_e$ of the underlying monoidal category $\EE(e).$

\begin{lemma}\label{lem_strictunit}
Let $\EE$ be a monoidal prederivator. Then the unit morphism $\SSS\colon e\to\EE$ and the strict morphism $\kappa_{\SSS_e}\colon e\to\EE$ are naturally isomorphic.
\end{lemma}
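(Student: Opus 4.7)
The plan is to build the natural isomorphism $\tau\colon\kappa_{\SSS_e}\to\SSS$ directly from the coherence isomorphisms of $\SSS$ evaluated at the canonical projections to the terminal category. Concretely, for each small category $J$ let $p_J\colon J\to e$ denote the unique functor, and define the component $\tau_J$ as the morphism
$$\tau_J\;=\;\gamma^{\SSS}_{p_J}\colon\;p_J^\ast\SSS_e\;\nach\;\SSS(J)$$
in $\EE(J)$. Since $e(J)$ has a single object and only identity morphisms, this determines a natural transformation $\kappa_{\SSS_e}(J)\to\SSS(J)$ of functors $e\to\EE(J)$; moreover, as $\SSS$ is a (pseudo-)natural transformation, each $\gamma^{\SSS}_{p_J}$ is invertible, so the components $\tau_J$ are isomorphisms.

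The first step is to unwind the definition of $\kappa_{\SSS_e}$. By the construction underlying the 2-categorical Yoneda isomorphism $Y\colon\Hom_{\PDer}^{\strict}(y(e),\EE)\stackrel{\cong}{\nach}\EE(e)$, the strict morphism $\kappa_{\SSS_e}$ sends the unique object of $e(J)$ to $p_J^\ast\SSS_e$, and its coherence isomorphisms are identities because $p_K\circ u=p_J$ for every $u\colon J\to K$. So the components $\tau_J$ do live between the correct objects.

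The heart of the proof is to verify the compatibility of the family $(\tau_J)$ with the coherence data of $\kappa_{\SSS_e}$ and $\SSS$, i.e., that for every $u\colon J\to K$ the square
$$
\xymatrix{
u^\ast\kappa_{\SSS_e}(K)\ar[r]^{u^\ast\tau_K}\ar[d]_{\gamma^{\kappa}_u=\id}&u^\ast\SSS(K)\ar[d]^{\gamma^{\SSS}_u}\\
\kappa_{\SSS_e}(J)\ar[r]_{\tau_J}&\SSS(J)
}
$$
commutes. After the identifications above, this reduces to the equality
$$\gamma^{\SSS}_u\circ u^\ast\gamma^{\SSS}_{p_K}\;=\;\gamma^{\SSS}_{p_J}$$
in $\EE(J)$, which is precisely the composition coherence axiom for the pseudo-natural transformation $\SSS$ applied to the composable pair $J\stackrel{u}{\to}K\stackrel{p_K}{\to}e$ (whose composite is $p_J$). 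Combined with the invertibility of every $\gamma^{\SSS}_{p_J}$, this yields the desired natural isomorphism $\tau\colon\kappa_{\SSS_e}\stackrel{\cong}{\to}\SSS$.

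I do not anticipate a genuine obstacle: the statement is essentially a bookkeeping lemma expressing that the extra data in a non-strict morphism out of the representable $y(e)$ is completely determined (up to natural isomorphism) by its value on the identity of $e$, which is the bicategorical strengthening of the Yoneda lemma alluded to in the paragraph preceding the statement. The only mild point to be careful about is not to confuse the identity coherence of $\kappa_{\SSS_e}$ (coming from the on-the-nose equality $p_K\circ u=p_J$) with the nontrivial $\gamma^{\SSS}_u$, and to invoke the composition axiom for $\SSS$ with the correct factorization.
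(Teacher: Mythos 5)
Your proposal is correct and follows essentially the same route as the paper: both define the components by $\tau_J=\gamma^{\SSS}_{p_J}\colon p_J^\ast(\SSS_e)\to\SSS(J)$, use that $\kappa_{\SSS_e}$ is strict (so its coherence cells are identities, since $p_K\circ u=p_J$ on the nose), and verify naturality via the composition coherence axiom for the pseudo-natural transformation $\SSS$ applied to $J\stackrel{u}{\to}K\stackrel{p_K}{\to}e$. The paper additionally remarks that the lemma also follows from the bicategorical Yoneda lemma, but its actual proof is the explicit verification you give.
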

\begin{proof}
Recall from the proof of the 2-Yoneda lemma that the value of $\kappa_{\SSS_e}$ at a category $K$ is just the element $p_K^\ast(\SSS_e)$ where $p_K\colon K\to e$ is the unique functor to the terminal category $e.$ Moreover, for a functor $u\colon J\nach K$ the induced functors $u^\ast\colon\EE(K)\to\EE(J)$ are canonically monoidal functors. In particular, there is a canonical isomorphism $u^\ast(\SSS_K)\to\SSS_J$ which is, as we saw in the first subsection, the structure isomorphism $\gamma_u$ belonging to the morphism of prederivators $\SSS\colon e\to\EE.$ Applied to the canonical functor $p_K,$ this gives us an isomorphism $$\tau_K=\gamma_{p_K}\colon(\kappa_{\SSS_e})_K=p_K^\ast(\SSS_e)\nach\SSS_K.$$
These $\tau_K$ assemble to a natural isomorphism $\tau\colon\kappa_{\SSS_e}\to\SSS.$ In fact, we just have to check that the following diagram commutes:
$$
\xymatrix{
u^\ast p_K^\ast(\SSS_e)\ar[rrr]^{u^\ast\tau_K=u^\ast\gamma_{p_K}}\ar@{=}[d]&&&u^\ast\SSS_K\ar[d]^{\gamma_u}\\
p_J^\ast(\SSS_e)\ar[rrr]_{\tau_Ju^\ast=\gamma_{p_J}u^\ast}&&&\SSS_J
}
$$
But this is just a special case of the coherence properties of the isomorphisms belonging to the morphisms of prederivators $\SSS\colon e\nach\EE.$ 
\end{proof}

We can also give a more conceptual proof of this lemma. For this purpose, let us recall the \emph{bicategorical} Yoneda lemma. For a general introduction to the theory of bicategories see \cite{benabou}. Although we are only concerned with 2-categories, let us quickly mention that the basic idea with bicategories is that one wants to relax the notion of 2-categories in the sense that one only asks for a composition law which is unital and associative up to specified natural coherent isomorphisms. Given two 2-categories $\mc$ and $\md$ and two parallel 2-functors $F,G\colon\mc\nach\md$ one can now consider the category $\PsNat(F,G)$ of pseudo-natural transformations where the morphisms are given by the modifications. As a special case, let us take $\md=\CAT$, let us fix an object $X\in\mc$ and let us consider the corepresented 2-functor $y(X)=\Hom(X,-):\mc\nach\CAT.$ If we are given in addition a $\CAT$-valued 2-functor $F\colon\mc\nach\CAT$ then we can consider the category $\PsNat(y(X),F)$. The bicategorical Yoneda lemma states that the evaluation at the identity of $X$ induces a natural \emph{equivalence of categories}\label{bicategoricalYoneda}:
$$Y\colon\PsNat(y(X),F)\stackrel{\simeq}{\nach} F(X)$$ 
The bicategorical Yoneda lemma in the more general situation of homomorphisms of bicategories can be found in \cite{street_fibinbicats}.

Let us recall that given two prederivators $\DD$ and $\DD'$ the category of morphisms from $\DD$ to $\DD'$ is given by $\Hom(\DD,\DD')=\PsNat(\DD,\DD').$ The bicategorical Yoneda lemma hence gives us an equivalence of categories
$$Y\colon\Hom (e,\DD)=\Hom(y(e),\DD)\stackrel{\simeq}{\nach}\DD(e).$$
In the special case where $\DD=\EE$ is monoidal the above lemma follows since both morphisms $\SSS$ and $\kappa_{\SSS_e}$ are mapped to $\SSS_e$ under $Y$ showing that they must be isomorphic.

Let now~$\EE$ be a monoidal, additive derivator. By definition of a monoidal derivator the monoidal pairing preserves homotopy colimits separately in each variable so that it is in particular biadditive. Conjugation by the natural isomorphism of Lemma \ref{lem_strictunit} induces the third map in the following composition of ring homomorphisms:
$$
\hhom_{\EE(e)}(\SSS_e,\SSS_e)\to\nat(\kappa_{\SSS_e},\kappa_{\SSS_e})\to\nat(\kappa_{\SSS_e}\otimes-,\kappa_{\SSS_e}\otimes-)\to\nat(\SSS\otimes -,\SSS\otimes -)
$$
A final conjugation with the coherence isomorphism $l\colon\mathbb{S}\otimes-\cong\id$ thus gives us a ring map
$$
\sigma\colon\hhom_{\EE(e)}(\SSS_e,\SSS_e)\nach \Z(\EE),
$$
i.e., the derivator $\EE$ is endowed with a linear structure over the endomorphisms of $\mathbb{S}_e.$ Thus, we have proved the following result.

\begin{proposition}\label{prop_linear}
A monoidal, additive derivator~$\EE$ is  canonically linear over $\hhom_{\EE(e)}(\SSS_e,\SSS_e).$ 
\end{proposition}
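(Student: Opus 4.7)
The plan is to build the claimed ring homomorphism $\sigma\colon\hhom_{\EE(e)}(\SSS_e,\SSS_e)\to\Z(\EE)$ as the composite of four ring maps, exactly as indicated in the paragraph preceding the statement. The guiding idea is that since the monoidal unit acts as the identity up to coherent isomorphism, an endomorphism of $\SSS_e$ ought to produce a natural endomorphism of $\id_\EE,$ and the construction makes this precise.

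More concretely, I would proceed as follows. First, apply the 2-categorical Yoneda isomorphism $\Hom^{\strict}_{\PDer}(e,\EE)\cong\EE(e)$ (together with its effect on $\nat(-,-)$) to turn an endomorphism $f\colon\SSS_e\to\SSS_e$ into a natural transformation $\kappa_f\colon\kappa_{\SSS_e}\to\kappa_{\SSS_e}$ of strict morphisms $e\to\EE.$ Second, whisker with the monoidal pairing to produce a natural transformation of the morphism $\kappa_{\SSS_e}\otimes-\colon\EE\to\EE;$ this is where one uses that $\otimes$ is a bimorphism of derivators and hence that tensoring with a fixed morphism out of $e$ defines an actual morphism $\EE\to\EE.$ Third, conjugate by the natural isomorphism $\kappa_{\SSS_e}\cong\SSS$ provided by Lemma \ref{lem_strictunit} to land in $\nat(\SSS\otimes-,\SSS\otimes-).$ Finally, conjugate by the left unitality isomorphism $l\colon\SSS\otimes-\stackrel{\cong}{\to}\id_\EE$ of the monoidal structure to arrive at an element of $\nat(\id_\EE,\id_\EE)=\Z(\EE).$

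For the ring homomorphism properties: multiplicativity of $\sigma$ is clear at every stage, because each of the four operations (Yoneda, whiskering, and the two conjugations by isomorphisms) preserves vertical composition of natural transformations. Additivity is the content that uses the monoidal/additive hypothesis: $\EE(e)$ is additive and the monoidal pairing preserves homotopy colimits separately in each variable, so in particular it is biadditive on each value. Consequently whiskering $\kappa_f+\kappa_g$ with $-\otimes-$ is again sent to the sum of the whiskered transformations, and the two conjugations are additive because precomposition and postcomposition with fixed isomorphisms are additive operations on hom-groups of an additive category. Unitality ($\sigma(1_{\SSS_e})=\id_{\id_\EE}$) follows from functoriality of every step. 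Packaging: $\sigma$ is a ring map $\hhom_{\EE(e)}(\SSS_e,\SSS_e)\to\Z(\EE),$ which by definition is a linear structure on $\EE.$

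The step I expect to require the most care is the second one, where one has to genuinely use the derivator structure and not merely work levelwise: whiskering a natural transformation of prederivator morphisms with the bimorphism $\otimes$ must be checked to land in the category $\nat(\kappa_{\SSS_e}\otimes-,\kappa_{\SSS_e}\otimes-)$ of natural transformations of pseudo-natural transformations, which amounts to verifying compatibility of $\kappa_f\otimes\id$ with all the coherence data $\gamma^\otimes_{u_1,u_2}$ of the bimorphism. This is formal but is exactly the pasting-diagram bookkeeping that the preceding sections (culminating in Lemma \ref{lemma_ad2var} and Proposition \ref{prop_bimorphism}) were designed to handle, so it should pose no real difficulty.
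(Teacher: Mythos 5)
Your proposal is correct and follows exactly the paper's construction: the ring map $\sigma$ is built as the composite $\hhom_{\EE(e)}(\SSS_e,\SSS_e)\to\nat(\kappa_{\SSS_e},\kappa_{\SSS_e})\to\nat(\kappa_{\SSS_e}\otimes-,\kappa_{\SSS_e}\otimes-)\to\nat(\SSS\otimes-,\SSS\otimes-)\to\Z(\EE)$, using the 2-categorical Yoneda lemma, whiskering with the monoidal pairing, conjugation by the isomorphism of Lemma \ref{lem_strictunit}, and conjugation by the left unitor, with biadditivity of $\otimes$ supplying additivity of the map. Your added remarks on why each stage is a ring homomorphism are consistent with (and slightly more explicit than) the paper's treatment.
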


\begin{corollary}
Let~$\mm$ be a combinatorial, closed monoidal model category with monoidal unit~$\SSS.$ If the derivator~$\DD_{\mm}$ is additive then it is canonically linear over $\hhom_{\Ho(\mm)}(\SSS,\SSS).$
\end{corollary}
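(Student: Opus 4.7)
The plan is to apply Proposition \ref{prop_linear} to the derivator $\DD_{\mm}$. First I would invoke the theorem (proved earlier in Subsection \ref{subsection_monoidalmodel}) that the derivator $\DD_{\mm}$ associated to a combinatorial monoidal model category $\mm$ carries a canonical (bi)closed monoidal structure. Together with the hypothesis that $\DD_{\mm}$ is additive, this places us in the setting of Proposition \ref{prop_linear}: $\DD_{\mm}$ is a monoidal, additive derivator.

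Next I would identify the endomorphism ring of the monoidal unit. By construction of $\DD_{\mm}$, the underlying category $\DD_{\mm}(e)$ is the homotopy category $\Ho(\mm)$, and the monoidal unit $\SSS_e$ of the induced monoidal structure on $\DD_{\mm}(e)$ is (canonically isomorphic to) the image in $\Ho(\mm)$ of the monoidal unit $\SSS$ of $\mm$. This is immediate from the construction of the monoidal structure on $\DD_{\mm}$: the unit morphism $\SSS\colon e\to\DD_{\mm}$ is obtained by deriving the unit of $\mm$, and evaluation at $e\in\Cat$ recovers (up to canonical isomorphism, using Lemma \ref{lem_strictunit}) the class of $\SSS$ in $\Ho(\mm)$. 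Consequently we have a natural identification of rings
$$\hhom_{\DD_{\mm}(e)}(\SSS_e,\SSS_e)\;\cong\;\hhom_{\Ho(\mm)}(\SSS,\SSS).$$

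With these identifications in place, Proposition \ref{prop_linear} yields the desired ring homomorphism
$$\sigma\colon\hhom_{\Ho(\mm)}(\SSS,\SSS)\;\nach\;\Z(\DD_{\mm}),$$
i.e.\ a canonical linear structure on $\DD_{\mm}$ over $\hhom_{\Ho(\mm)}(\SSS,\SSS)$. No additional obstacle is expected: the only point that needs a brief verification is the identification of the unit of the induced monoidal structure on $\Ho(\mm)$ with the image of $\SSS$, which is built into the construction of the monoidal derivator underlying $\mm$ (and is what guarantees the unitality condition in the definition of a monoidal model category). The closedness hypothesis is used only to place us in the setting already handled (it is not strictly needed for the statement itself, which depends only on having a monoidal, additive derivator).
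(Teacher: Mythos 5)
Your proposal is correct and matches the paper's (implicit) argument: the corollary is stated without proof immediately after Proposition \ref{prop_linear}, and the intended justification is exactly the one you give — the derivator of a combinatorial monoidal model category is canonically monoidal, $\DD_{\mm}(e)=\Ho(\mm)$ with monoidal unit the image of $\SSS$, so Proposition \ref{prop_linear} applies. Your observation that closedness is not strictly needed is also accurate.
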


We will mention a few specific examples at the end of this subsection. Let us note that there is a certain asymmetry in the construction of the above linear structures: we only used the coherence isomorphism $\SSS\otimes -\cong\id.$ As a consequence, a similar result concerning the existence of linear structures can also be established for additive derivators which are modules over monoidal, additive derivators via an additive action (cf.\ \cite{groth_enriched}). Furthermore, there is a graded variant of this result for stable, monoidal derivators. 

But before we come to that we want to mention that the existence of these linear structures is only the shadow of a much more structured result. In \cite{groth_enriched} we will see that a derivator has an associated derivator of endomorphisms denoted $\END(\DD).$ In that paper we establish a general 2-categorical result implying that $\END(\DD)$ is canonically monoidal and gives the (bi)terminal example of a monoidal derivator acting on~$\DD$. In the case of a monoidal derivator $\EE$ this specializes to the existence of a canonical monoidal morphism $\EE\nach\END(\EE).$ In the additive context, the ring map of Proposition~\ref{prop_linear} is just a shadow of this monoidal morphism of derivators.

Recall, e.g.\ from \cite[Definition A.2.1]{hps_axiomatic} and \cite[Section 4]{may_additivity}, that there are notions of when a closed monoidal structure on a triangulated category is \emph{compatible} with the triangulation. In the context of stable derivators the `triangulation' is not an additional structure. Moreover, some of the compatibility assumptions made in loc.cit.\ are automatically satisfied for stable, (closed) monoidal derivators. For example the monoidal pairing commutes with homotopy colimits separately in both variables and hence also with suspensions. In the symmetric case we can consider for arbitrary $s,\:t\in\mathbb{Z}$ the following diagram:
$$
\xymatrix{
\Sigma^s\SSS\otimes\Sigma^t\SSS\ar[d]_\cong\ar[r]^-\cong&\Sigma^{s+t}\SSS\ar[d]^{(-1)^{st}}\\
\Sigma^t\SSS\otimes\Sigma^s\SSS\ar[r]_-\cong&\Sigma^{t+s}\SSS
}
$$
The vertical map $(-1)^{st}$ makes of course sense since a stable derivator is, in particular, additive. 

\begin{lemma}\label{lem_compatible}
Let~$\EE$ be a symmetric monoidal derivator then the above square commutes for all~$s,\:t\in\mathbb{Z}$ in the case of a stable derivator and for all~$s,\:t\in\mathbb{N}$ in the case of an additive derivator.
\end{lemma}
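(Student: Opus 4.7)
My plan is to reduce the general statement to the base case $s = t = 1$ and then invoke the fact that the symmetric twist on $\Sigma \SSS \otimes \Sigma \SSS$ corresponds to multiplication by $-1$ on $\Sigma^2 \SSS$. Since $\otimes$ preserves homotopy left Kan extensions separately in each variable, it commutes with $\Sigma$ in each variable, giving natural isomorphisms $\Sigma X \otimes Y \cong \Sigma(X \otimes Y) \cong X \otimes \Sigma Y$. Together with the unit isomorphism $\SSS \otimes \SSS \cong \SSS,$ these assemble (in either order) into the horizontal isomorphism $\Sigma^s\SSS \otimes \Sigma^t\SSS \cong \Sigma^{s+t}\SSS$ of the square; any two such orderings agree up to automorphisms which are transparent to the twist by naturality.

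Using naturality of the symmetry with respect to the isomorphisms $\Sigma X \otimes Y \cong X \otimes \Sigma Y,$ the full square decomposes into a composite of $st$ copies of the elementary square
\[
\xymatrix{
\Sigma\SSS \otimes \Sigma\SSS \ar[r]^-\cong \ar[d]_{t} & \Sigma^2\SSS \ar[d]^{-1} \\
\Sigma\SSS \otimes \Sigma\SSS \ar[r]_-\cong & \Sigma^2\SSS
}
\]
interlocked with suspensions in ambient coordinates through which the twist passes invisibly. Granted this base case, the total sign picked up is $(-1)^{st}$ as required, and the case $s = 0$ or $t = 0$ is handled by the triangle axiom of the symmetric monoidal structure, which ensures that the twist against the unit is compatible with the unitor.

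The heart of the argument is therefore the base case: one must show that the two canonical identifications $\Sigma\SSS \otimes \Sigma\SSS \rightrightarrows \Sigma^2\SSS$, obtained by first pulling $\Sigma$ out of the left factor versus first pulling it out of the right factor, differ by the automorphism $-1$ of $\Sigma^2 \SSS$. This is the Koszul sign rule for double suspensions. To make sense of $-1$ on $\Sigma^2 X$ in the additive, non-stable case we invoke the dual of Proposition~\ref{prop_loopgroup}, which endows each $\Sigma X$ with a canonical cogroup structure. The equality of the two identifications with the action of $-1$ is then an Eckmann-Hilton-type computation performed on the pushout-square model of $\Sigma$: the two ways of building $\Sigma^2$ correspond to two nested pushout squares differing by a reflection of the indexing category, and that reflection acts as the antipode. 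For $s, t \in \mathbb{N}$ this completes the additive case; in the stable case one extends to $s, t \in \mathbb{Z}$ by using that $\Omega$ is a quasi-inverse to $\Sigma$ and dualising the same argument in each of the remaining quadrants of $\mathbb{Z} \times \mathbb{Z}$.

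The main obstacle is the base case, since the reduction is essentially formal. Concretely, I expect the sign to fall out of a careful comparison of the pushout-square construction of $\Sigma$ with its horizontal/vertical reflection, paralleling the analogous computation appearing in the proof of additivity of stable derivators in \cite[Section 4]{groth_derivator}; the new ingredient here is that one must now also track the coherence of this identification with the monoidal pairing.
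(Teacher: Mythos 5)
Your reduction to the base case $s=t=1$ (elementary transpositions, unit coherence for $s=0$ or $t=0$, inverting $\Sigma$ for negative exponents in the stable case) is exactly the paper's first step, and your reformulation of that base case is also correct: by naturality of the symmetry with respect to the isomorphisms $\Sigma X\otimes Y\cong\Sigma(X\otimes Y)\cong X\otimes\Sigma Y$ and the coherence identity $\tau_{\SSS,\SSS}=\id$, the conjugated twist on $\Sigma^2\SSS$ is precisely the discrepancy between the two orders of pulling the suspensions out of the two tensor factors. Where your argument has a genuine gap is in the claim that this discrepancy "is a reflection of the indexing category, and that reflection acts as the antipode." The antipode of the cogroup $\Sigma Z$ is induced by reversing a \emph{single} suspension coordinate, i.e.\ by the automorphism of one factor of the indexing poset that exchanges its two cone points; the dual of Proposition~\ref{prop_loopgroup} does identify \emph{that} automorphism with $-1$. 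But the discrepancy you need to compute is induced by the \emph{transposition} of the two factors of the indexing category, which is a different automorphism. Topologically these are the reflections $(x,y)\mapsto(1-x,y)$ and $(x,y)\mapsto(y,x)$ of the square; they agree up to a rotation, and the rotation is homotopic to the identity --- but that homotopy is invisible at the level of the finite posets indexing the iterated pushouts, so no Eckmann--Hilton interchange argument on the nested pushout squares will produce it. (Eckmann--Hilton tells you the two cogroup structures on $\Sigma^2\SSS$ coincide and are cocommutative, and that the transposition is a cogroup automorphism commuting with $-1$; it does not distinguish $+1$ from $-1$.) This missing homotopy is exactly the non-formal input.

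The paper supplies that input externally: it invokes the universal property of the derivator of pointed spaces to make every pointed derivator canonically tensored over $\Ho(\sSet_\ast)$, identifies $\Sigma$ with the action of $S^1$, and then quotes the classical fact that the twist on $S^1\wedge S^1$ has degree $-1$. So the two approaches genuinely diverge at the base case, and yours, as written, stops one step short of a proof: to complete it you would have to exhibit, inside the derivator formalism, a homotopy between the transposition of the two suspension coordinates and a single-coordinate flip (or otherwise compute the transposition directly), which is the hard part that the concrete model of $S^2$ is being used to avoid.
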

\begin{proof}
First, it is enough to prove that this holds true in the case of $s=t=1.$ Then, the universal property of the derivator~$\DD_{\mathsf{Top}_\ast}$ associated to pointed topological spaces guarantees that every pointed derivator is canonically tensored over this monoidal derivator. Moreover, since this action preserves homotopy colimits in the first variable it follows that the suspension morphism on~$\EE$ is given by the action of the sphere $S^1\in\Ho(\mathsf{sSet}_\ast).$ The fact that the twist map $t\colon S^1\wedge S^1\cong S^1\wedge S^1$ has degree $-1$ concluded the proof.
\end{proof}

\begin{lemma}
Let~$\EE$ be a stable, symmetric monoidal derivator and let~$\SSS_e$ be the monoidal unit of the underlying monoidal category~$\EE(e).$ Then the graded ring of self-maps  $\hhom_{\EE(e)}(\SSS_e,\SSS_e)_\bullet$ is graded-commutative.
\end{lemma}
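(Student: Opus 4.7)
\emph{Plan.} The graded ring $R_\bullet := \hhom_{\EE(e)}(\SSS_e,\SSS_e)_\bullet$ inherits its multiplication from the symmetric monoidal structure: for $f\colon\Sigma^s\SSS_e\nach\SSS_e$ in $R_s$ and $g\colon\Sigma^t\SSS_e\nach\SSS_e$ in $R_t$, the product $f\cdot g\in R_{s+t}$ is the composite
$$
\Sigma^{s+t}\SSS_e \stackrel{\phi_{s,t}^{-1}}{\nach} \Sigma^s\SSS_e \otimes \Sigma^t\SSS_e \stackrel{f\otimes g}{\nach} \SSS_e \otimes \SSS_e \stackrel{l}{\nach} \SSS_e,
$$
where $\phi_{s,t}\colon\Sigma^s\SSS_e\otimes\Sigma^t\SSS_e\stackrel{\cong}{\nach}\Sigma^{s+t}\SSS_e$ is the canonical iso featuring in Lemma \ref{lem_compatible} (available since $\otimes$ preserves homotopy colimits, and hence commutes with $\Sigma$, separately in each variable) and $l$ is the left unit constraint. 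The goal is to establish the graded-commutativity identity $f\cdot g = (-1)^{st}\, g\cdot f$.

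The argument proceeds by transporting $g\cdot f$ through the twist. Naturality of the symmetry constraint of the symmetric monoidal derivator $\EE$ gives
$$
(g\otimes f)\circ T_{\Sigma^s\SSS_e,\Sigma^t\SSS_e} = T_{\SSS_e,\SSS_e}\circ(f\otimes g),
$$
and Lemma \ref{lem_compatible}, applied in the stable case so that $s,t\in\mathbb{Z}$ are unrestricted, identifies $T_{\Sigma^s\SSS_e,\Sigma^t\SSS_e}$ with $\phi_{t,s}^{-1}\circ(-1)^{st}\circ\phi_{s,t}$. Precomposing the naturality equation with $\phi_{s,t}^{-1}$ and substituting this identification of the twist yields
$$
(-1)^{st}\,(g\otimes f)\circ\phi_{t,s}^{-1} \;=\; T_{\SSS_e,\SSS_e}\circ(f\otimes g)\circ\phi_{s,t}^{-1}.
$$

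To conclude, we post-compose with $l\colon\SSS_e\otimes\SSS_e\nach\SSS_e$ and invoke the standard coherence for symmetric monoidal categories that on the unit object $l_{\SSS_e}=r_{\SSS_e}$ together with $l_{\SSS_e} = r_{\SSS_e}\circ T_{\SSS_e,\SSS_e}$, so that $l\circ T_{\SSS_e,\SSS_e}=l$ as morphisms $\SSS_e\otimes\SSS_e\nach\SSS_e$. This absorbs the remaining twist on the right-hand side and gives $(-1)^{st}(g\cdot f) = f\cdot g$, which is the asserted graded commutativity. No step is genuinely hard; the only delicate point is the sign bookkeeping, but all signs are forced by Lemma \ref{lem_compatible} (the geometric input that the twist on $S^1\wedge S^1$ has degree $-1$) together with the formal coherence identities for $l$, $r$, and $T$ on the unit object.
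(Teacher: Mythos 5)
Your argument rests on exactly the same three inputs as the paper's proof --- Lemma \ref{lem_compatible}, the naturality of the symmetry constraint, and the unit coherence $l_{\SSS_e}=r_{\SSS_e}=l_{\SSS_e}\circ T_{\SSS_e,\SSS_e}$ --- and your sign bookkeeping with the twist is correct. The one genuine issue is the identification of the multiplication. The graded ring $\hhom_{\EE(e)}(\SSS_e,\SSS_e)_\bullet$ carries the \emph{composition} product of the graded category $\EE(e)$: for $f\in\hhom(\SSS_e,\SSS_e)_s$ and $g\in\hhom(\SSS_e,\SSS_e)_t$ the product is the composite $\Sigma^{t+s}\SSS_e\stackrel{\Sigma^t f}{\nach}\Sigma^t\SSS_e\stackrel{g}{\nach}\SSS_e$. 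That is the multiplication the lemma refers to and the one the paper manipulates. You instead \emph{declare} the multiplication to be $l\circ(f\otimes g)\circ\phi_{s,t}^{-1}$ and prove graded commutativity of that operation. The two products do agree, but this requires an argument: one must check $f\circ\Sigma^s g = l\circ(f\otimes g)\circ\phi_{s,t}^{-1}$, using the factorization $f\otimes g=(f\otimes\id)\circ(\id\otimes g)$, the compatibility of the isomorphisms $\phi$ with the unit constraints, and the naturality of $l$ and $r$. As written, your proof establishes graded commutativity of a binary operation that has not been shown to be the ring multiplication in the statement.

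This missing interchange step is routine, and it is instructive to note where the paper puts it: the outer columns and triangles of the single large commutative diagram in the paper's proof carry out precisely this identification of the composition product with the tensor-product description, while the middle squares of that diagram are your naturality-of-$T$ and Lemma \ref{lem_compatible} steps. So the paper's diagram proves the identification and the commutativity simultaneously; your argument reproduces only the middle of it. Supplying the interchange verification (or citing it explicitly) closes the gap and makes your proof equivalent to the paper's.
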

\begin{proof}
As a special case of the composition in the graded category~$\EE(e),$ the composition of two graded morphisms $g\colon\Sigma^t\SSS_e\nach\SSS_e$ and $f\colon\Sigma^s\SSS_e\nach\SSS_e$ is given by:
$$g\circ f\colon\quad\Sigma^{t+s}\SSS_e\stackrel{\Sigma^t f}{\nach}\Sigma^t\SSS_e\stackrel{g}{\nach}\SSS_e$$
The graded-commutativity of this composition is now implied by the following diagram which is commutative by Lemma~\ref{lem_compatible}:
$$
\xymatrix{
\Sigma^s\SSS_e\ar@/_3.0pc/[dd]_-f&\Sigma^{s+t}\ar[l]_-{\Sigma^s g}\SSS_e\ar[r]^{(-1)^{st}}&\Sigma^{t+s}\ar[r]^-{\Sigma^tf}\SSS_e&\Sigma^t\SSS_e \ar@/^3.0pc/[dd]^-g\\
\Sigma^s\SSS_e\otimes\SSS_e\ar[u]_\cong\ar@/_0.3pc/[dr]_-{f\otimes\id}& \Sigma^s\SSS_e\otimes\Sigma^t\SSS_e\ar[u]^\cong\ar[l]^-{\id\otimes g}\ar[r]_-\cong\ar[d]^{f\otimes g}&
\Sigma^t\SSS_e\otimes\Sigma^s\SSS_e\ar[u]_\cong\ar[r]_-{\id\otimes f}\ar[d]_{g\otimes f}& \Sigma^t\SSS_e\otimes\SSS_e\ar[u]^\cong\ar@/^0.3pc/[dl]^-{g\otimes\id}\\
\SSS_e&\SSS_e\otimes\SSS_e\ar[r]_-\cong\ar[l]^-\cong&\SSS_e\otimes\SSS_e\ar[r]_-\cong&\SSS_e
}
$$
Here, the `composition of the bottom line' just gives~$\id_{\SSS_e}$ by one of the coherence axioms for a symmetric monoidal category.
\end{proof}

\begin{proposition}
A stable, symmetric monoidal derivator~$\EE$ is canonically endowed with a linear structure over the graded-commutative ring $\hhom_{\EE(e)}(\SSS_e,\SSS_e)_\bullet,$ i.e., we have a morphism of graded rings
$$\hhom_{\EE(e)}(\SSS_e,\SSS_e)_\bullet\nach \Z_\bullet(\EE).$$
\end{proposition}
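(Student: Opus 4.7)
The plan is to extend the degree-zero construction of Proposition~\ref{prop_linear} to all graded degrees, with the sign condition in the definition of~$\Z_\bullet(\EE)$ being forced by Lemma~\ref{lem_compatible}. Given $f\colon\Sigma^n\SSS_e\to\SSS_e$, I first build $\sigma(f)\colon\Sigma^n\to\id_\EE$ exactly as in the proof of Proposition~\ref{prop_linear}: apply Lemma~\ref{lem_strictunit} to transfer $f$ to a natural transformation $\kappa_{\Sigma^n\SSS_e}\to\kappa_{\SSS_e}$ of strict morphisms $e\to\EE$, tensor with $\id_\EE$, and post-compose with the unit coherence $l\colon\SSS\otimes-\cong\id_\EE$. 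Because $\otimes$ preserves homotopy left Kan extensions in the first variable, there is a canonical natural isomorphism $\kappa_{\Sigma^n\SSS_e}\otimes-\cong\Sigma^n(\kappa_{\SSS_e}\otimes-)\cong\Sigma^n$; pre-composing yields $\sigma(f)\colon\Sigma^n\to\id_\EE$ whose value at $X\in\EE(J)$ is
$$
\sigma(f)_X\colon\Sigma^n X\;\cong\;\Sigma^n\SSS_e\otimes X\;\xrightarrow{f\otimes\id_X}\;\SSS_e\otimes X\;\cong\;X.
$$

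To see that $\sigma(f)\in\Z_n(\EE)$ I would verify $\Sigma\sigma(f)=(-1)^n\sigma(f)\Sigma$ as natural transformations $\Sigma^{n+1}\to\Sigma$. Using $\Sigma\cong\Sigma\SSS_e\otimes-$ (which holds by bicocontinuity of $\otimes$), both sides can be represented on the tensor $\Sigma^{n+1}\SSS_e\otimes X$ but via two different canonical identifications of $\Sigma^{n+1}$: the transformation $\Sigma\sigma(f)$ corresponds to bracketing $\Sigma^{n+1}\SSS_e\cong\Sigma\SSS_e\otimes\Sigma^n\SSS_e$ with $f$ acting on the second factor, while $\sigma(f)\Sigma$ corresponds to $\Sigma^{n+1}\SSS_e\cong\Sigma^n\SSS_e\otimes\Sigma\SSS_e$ with $f$ acting on the first factor. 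The passage between these two bracketings is governed by the symmetry constraint $\Sigma\SSS_e\otimes\Sigma^n\SSS_e\cong\Sigma^n\SSS_e\otimes\Sigma\SSS_e$, and Lemma~\ref{lem_compatible} identifies the resulting self-map of $\Sigma^{n+1}\SSS_e$ with multiplication by $(-1)^{n\cdot 1}=(-1)^n$, delivering the required sign.

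It remains to verify the ring axioms. Unitality $\sigma(\id_{\SSS_e})=\id_{\id_\EE}$ is immediate from the coherence isomorphism $l$; additivity on each homogeneous component follows from the biadditivity of $\otimes$, which holds since $\otimes$ preserves homotopy left Kan extensions and is hence additive in each variable; and multiplicativity reduces to the identity $\sigma(g\circ f)_X=\sigma(g)_X\circ\Sigma^t\sigma(f)_X$ for $f\colon\Sigma^s\SSS_e\to\SSS_e$ and $g\colon\Sigma^t\SSS_e\to\SSS_e$ with graded product $g\circ f=g\circ\Sigma^t f$, an identity obtained by rewriting $(\Sigma^t f)\otimes\id_X=\Sigma^t(f\otimes\id_X)$ via bicocontinuity of $\otimes$ in the first variable. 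The main obstacle is the sign check of the second paragraph: Lemma~\ref{lem_compatible} is stated only for tensor products of pure suspensions of $\SSS_e$, so applying it in the presence of a generic tensor factor $X$ requires first reducing the comparison to the pure-unit situation. This reduction is legitimate because $\otimes$ preserves homotopy Kan extensions separately in each variable and hence, in particular, $X$ may be treated as a passive factor that commutes with $\Sigma$.
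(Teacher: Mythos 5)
Your construction coincides with the paper's: the paper likewise sends $f\colon\Sigma^n\SSS_e\to\SSS_e$ through $\nat(\kappa_{\Sigma^n\SSS_e}\otimes-,\kappa_{\SSS_e}\otimes-)\cong\nat(\Sigma^n\circ(\SSS\otimes-),\SSS\otimes-)\cong\nat(\Sigma^n,\id_{\EE})$, using cocontinuity of $\otimes$ in the first variable, Lemma~\ref{lem_strictunit}, and the unit coherence, exactly as you do. The paper explicitly only sketches the argument and does not spell out that the image lands in the subgroup $\Z_n(\EE)\subseteq\nat(\Sigma^n,\id_{\EE})$ cut out by the sign condition, nor the ring axioms; your appeal to Lemma~\ref{lem_compatible} for the sign and to bicocontinuity for multiplicativity is precisely the intended mechanism (the same device drives the preceding graded-commutativity lemma), so your proposal is a correct, more detailed rendering of the same proof.
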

\begin{proof}
We only give a sketch of the proof. Using the same notation as in the unstable case, we obtain a map
$\hhom_{\DD(e)}(\SSS_e,\SSS_e)_n=\hhom_{\DD(e)}(\Sigma^n\SSS_e,\SSS_e)\nach\nat(\kappa_{\Sigma^n\!\SSS_e}\otimes-,\kappa_{\SSS_e}\otimes-)$
which can be composed with the following chain of identifications:
\begin{eqnarray*}
\nat(\kappa_{\Sigma^n\!\SSS_e}\otimes-,\kappa_{\SSS_e}\otimes-)&\cong&\nat(\Sigma^n\circ(\kappa_{\SSS_e}\otimes-),\kappa_{\SSS_e}\otimes-)\\
&\cong& \nat(\Sigma^n\circ (\SSS\otimes-),\SSS\otimes-)\\
&\cong& \nat(\Sigma^n,\id_{\DD})\\
&=& \Z_n(\DD)
\end{eqnarray*}
These assemble together to define the intended map of graded rings $\hhom_{\DD(e)}(\SSS_e,\SSS_e)_\bullet\nach \Z_\bullet(\DD).$ 
\end{proof}

This can be applied to interesting derivators associated to combinatorial, stable, monoidal model categories. We again take up two of the examples of Subsection \ref{subsection_monoidalmodel} which give rise to stable, closed monoidal derivators. In the sequel~\cite{groth_enriched} to this paper these examples will be continued to include interesting graded-linear structures on derivators of modules over not necessarily commutative monoids in the respective context.

\begin{example}
Let us consider the projective model structure on the category~$\Ch(k)$ of chain complexes over~$k$. The ring of endomorphisms of the monoidal unit~$k[0]$ in the homotopy category, i.e., in the derived category~$D(k)$ of the ring~$k$, is just the ground ring, i.e., we have $\hhom_{D(k)}(k[0],k[0])\cong k$. Thus, the derivator $\DD_k$ is canonically endowed with a $k$-linear structure. Since this model structure is stable we also obtain a graded-linear structure. In this example this does not lead to an interesting additional structure since the graded ring of endomorphisms $\hhom_{D(k)}(k[0],k[0])_\bullet$ is concentrated in degree zero.

But the graded-linear structure is more interesting in the case of the stable, monoidal derivator~$\DD_C$ associated to a commutative differential-graded algebra~$C$ over~$k$. The monoidal unit is~$C$ itself and its graded ring of self-maps in $\DD_C(e)=\Ho(\Mod-C)$ is canonically isomorphic to the homology~$H_\bullet(C)$. Thus,~$\DD_C$ is endowed with a graded-linear structure $H_\bullet(C)\nach \Z_\bullet(\DD_C).$
\end{example}

\begin{example}
Let us consider the absolute projective stable model structure on the category~$\Sp^\mathsf{\Sigma}$. The endomorphisms of the sphere spectrum in the homotopy category, i.e., in \emph{the stable homotopy category}~$\SHC,$ are the integers, i.e., we have $\hhom_{\SHC}(\SSS,\SSS)\cong\mathbb{Z}.$ Thus, the derivator~$\DD_{\Sp}$ is canonically $\mathbb{Z}$-linear what we already knew since it is stable and hence, in particular, additive. But there is more structure: the graded self-maps of the sphere spectrum in~$\SHC$ form the graded-commutative ring~$\pi^S_\bullet$ given by the stable homotopy groups of spheres. Thus, the derivator~$\DD_{\Sp}$ is endowed with a graded-linear structure $\pi^S _\bullet\nach \Z_\bullet (\DD_{\Sp})$. In particular, all categories $\DD_{\Sp}(K)$ are $\pi^S_\bullet$-linear categories and all induced functors~$u^\ast,\:u_!,$ and~$u_\ast$ preserve these linear structures. 

Similarly, if~$E$ is a commutative ring spectrum, then the derivator~$\DD_E$ of right $E$-module spectra is canonically endowed with a linear structure over the graded ring of self-maps of~$E$ in the homotopy category $\Ho(\Mod-E).$ Thus, we obtain a graded ring map $\pi_\bullet(E)\nach \Z_\bullet(\DD_E)$ where $\pi_\bullet(E)$ denotes the graded-commutative ring of homotopy groups of~$E.$
\end{example}

\newpage

\bibliographystyle{alpha}
\bibliography{derivators_2}

\end{document}